\pgfplotsset{compat=newest}
\newcommand{\xmark}{\ding{55}} 
\newcommand{\cmark}{\ding{51}} 
\colorlet{color1}{blue}
\colorlet{color2}{red!50!black}
\definecolor{ivory}{RGB}{218,215,203}
\definecolor{cuhkp}{RGB}{98,56,105} 	
\definecolor{cuhkpl}{RGB}{152,24,147} 	
\definecolor{cuhkb}{RGB}{219,160,1} 	
\definecolor{cuhkbd}{RGB}{178,129,0} 	
\definecolor{cuhkr}{RGB}{88,35,155}  	
\crefname{section}{section}{sections}
\crefname{subsection}{subsection}{subsections}
\Crefname{figure}{Figure}{Figures}
\theoremstyle{plain}
\newtheorem{theorem}{Theorem}[section]
\newtheorem{thm}{Theorem}[section]
\newtheorem{lemma}[thm]{Lemma}
\newtheorem{remark}{Remark}[section]
\newtheorem{assumption}[thm]{Assumption}
\theoremstyle{plain}
\newcommand{\R}{\mathbb{R}}
\newcommand{\order}[1]{\mathcal{O}\left(#1\right)}
\newcommand{\torder}[1]{\tilde{\mathcal{O}}\left(#1\right)}
\newcommand{\torderi}[1]{\tilde{\mathcal{O}}(#1)}
\newcommand{\prt}[1]{\left(#1\right)}
\newcommand{\brk}[1]{\left[#1\right]}
\newcommand{\crk}[1]{\left\{#1\right\}}
\newcommand{\orderi}[1]{\mathcal{O}(#1)}
\newcommand{\prti}[1]{(#1)}
\newcommand{\brki}[1]{[#1]}
\newcommand{\crki}[1]{\{#1\}}
\newcommand{\inpro}[1]{\left\langle #1 \right\rangle}
\newcommand{\inproi}[1]{\langle #1 \rangle}
\newcommand{\condE}[2]{\E\brk{#1\middle|#2}}
\newcommand{\condEi}[2]{\E[#1|#2]}
\newcommand{\norm}[1]{\left\Vert #1 \right\Vert}
\newcommand{\normi}[1]{\Vert #1 \Vert}
\newcommand{\bs}[1]{\boldsymbol{#1}}
\newcommand{\E}{\mathbb{E}}
\newcommand{\x}{\mathbf{x}}
\newcommand{\z}{\mathbf{z}}
\newcommand{\g}{\mathbf{g}}
\newcommand{\1}{\mathbf{1}}
\newcommand{\T}{\intercal}
\newcommand{\sumn}{\sum_{i=1}^n}
\newcommand{\cN}{\mathcal{N}}
\newcommand{\y}{\mathbf{y}}
\newcommand{\cF}{\mathcal{F}}
\newcommand{\cL}{\mathcal{L}}
\newcommand{\0}{\mathbf{0}}
\newcommand{\tpi}{\tilde{\Pi}}
\newcommand{\tW}{\tilde{W}}
\newcommand{\trw}{\tilde{\rho}_w}
\newcommand{\cR}{\mathcal{R}}
\newcommand{\tx}{\tilde{\mathbf{x}}}
\newcommand{\ty}{\tilde{\mathbf{y}}}
\newcommand{\cC}{\mathcal{C}}
\newcommand{\cH}{\mathcal{H}}
\newcommand{\cD}{\mathcal{D}}
\newcommand{\KTN}{K_T^{(\text{NCVX})}}
\newcommand{\KTP}{K_T^{(\text{PL})}}
\newcommand{\bc}{\mathbf{c}}
\newcommand{\teta}{\hat{\eta}}
\newcommand{\tPi}{\tilde{\Pi}}
\newcommand{\tcL}{\tilde{\cL}}
\newcommand{\br}{\mathbf{r}}
\newcommand{\tcH}{\tilde{\cH}}
\newcommand{\tcD}{\tilde{\cD}}
\newcommand{\bavg}{\underline{\bar{\nabla}}}
\newcommand{\ceil}[1]{\left\lceil#1\right\rceil}
\newcommand{\ceili}[1]{\lceil#1\rceil}
\newcommand{\DSMTL}{LMT}
\title{Distributed Stochastic Momentum Tracking with Local Updates: Achieving Optimal Communication and Iteration Complexities}
\author{
Kun Huang \\
The Chinese University of Hong Kong, Shenzhen \\
School of Data Science (SDS) \\
Shenzhen, Guangdong, China \\
\texttt{kunhuang@link.cuhk.edu.cn}
\And
Shi Pu\\
The Chinese University of Hong Kong, Shenzhen \\
School of Data Science (SDS) \\
Shenzhen, Guangdong, China \\
\texttt{pushi@cuhk.edu.cn}
}
\begin{document}

\maketitle
\begin{abstract}

    We propose Local Momentum Tracking (LMT), a novel distributed stochastic gradient method for solving distributed optimization problems over networks. To reduce communication overhead, LMT enables each agent to perform multiple local updates between consecutive communication rounds. Specifically, LMT integrates local updates with the momentum tracking strategy and the Loopless Chebyshev Acceleration (LCA) technique.
    We demonstrate that LMT achieves linear speedup with respect to the number of local updates as well as the number of agents for minimizing smooth objective functions with and without the Polyak-{\L}ojasiewicz (PL) condition. 
    Notably, with sufficiently many local updates $Q\geq Q^*$, LMT attains the optimal communication complexity. For a moderate number of local updates $Q\in[1,Q^*]$, LMT achieves the optimal iteration complexity.
    To our knowledge, LMT is the first distributed stochastic gradient method with local updates that enjoys such properties.

\end{abstract}

\section{Introduction}

We consider a group of networked agents $\cN:=\crki{1,2,\ldots, n}$ collaborating to solve the following distributed optimization problem:
\begin{equation}
    \label{eq:P}
    \min_{x\in\R^p} f(x):= \frac{1}{n}\sumn f_i(x),
\end{equation}
where each agent $i$ has access only to stochastic gradients of its local objective function $f_i:\R^p\rightarrow\R$. Such a formulation is common in numerous applications, including signal processing \cite{Dimakis2010gossip}, distributed estimation \cite{suresh2017distributed}, and machine learning \cite{chang2020distributed}. 

Communication overhead is a primary bottleneck for solving Problem~\eqref{eq:P} over multiple agents, especially in large-scale settings. 
For example, in federated mean estimation, each agent holds a $p$-dimensional local vector that must be transmitted to a central server in every communication round \cite{suresh2017distributed}. This results in a communication cost of $\orderi{np}$ per round, which becomes a performance bottleneck when either $p$ or $n$ is large.
Distributed optimization over networks, or decentralized optimization, is a compelling alternative to mitigate this issue \cite{nedic2018network}. 
However, even in sparse networks, frequent transmission of high-dimensional models remains prohibitively expensive. 

To reduce communication costs in decentralized optimization, two primary strategies have been developed. The first strategy accelerates the algorithmic convergence, thereby decreasing the total number of communication rounds required to achieve a solution with the desired accuracy.
For example, the Distributed Stochastic Momentum Tracking (DSMT) method combines momentum tracking with the Loopless Chebyshev Acceleration (LCA) technique to achieve an accelerated convergence rate \cite{huang2025accelerated}.
However, DSMT follows a ``one-update, one-communication'' pattern, which can result in suboptimal overall communication complexity.

The second common strategy performs multiple local (stochastic) gradient updates between consecutive communication rounds. By skipping communications periodically, this approach has been proven effective in reducing the total number of communication rounds \cite{liu2024decentralized,alghunaim2024local,koloskova2020unified,yang2024accelerating}.
In particular, several studies have established the {\em linear speedup} property with respect to both the number of agents and the number of local updates, provided the number of iterations is sufficiently large.

This work introduces a novel method, Local Momentum Tracking (\DSMTL), that synthesizes the above two strategies. 
Specifically, \DSMTL~judiciously integrates multiple local updates into the framework of DSMT through a novel redesign that harmonizes local updates with the momentum tracking technique and the LCA mechanism. This redesign is crucial, as directly combining local updates with DSMT fails to attain the accelerated convergence rates (see Remark~\ref{rem:fail}).
We show that, with sufficiently many iterations, \DSMTL~achieves linear speedup with respect to the number of agents and the number of local updates for minimizing smooth objective functions, both with and without the Polyak-{\L}ojasiewicz (PL) condition.
Notably, LMT attains the optimal communication complexity when the number of local updates $Q$ is large enough ($Q \geq Q^*$), and preserves optimal overall iteration complexity (considering both communication and sample complexities) for $Q\in [1,Q^*]$.
    To the best of our knowledge, LMT is the first distributed stochastic gradient method with local updates that achieves these results.

\subsection{Related Works}

Previous works on distributed stochastic gradient methods have primarily focused on accelerating the convergence rate while following the ``one-update, one-communication'' pattern. Under this setup, recent works have shown that distributed stochastic gradient methods can asymptotically achieve performance comparable  to their centralized counterparts \cite{pu2020asymptotic}. The key performance metric is the {\em transient time}, i.e., the number of iterations required for a distributed method to exhibit such comparable convergence.
For minimizing smooth objective functions, the transient time has been reduced from $\orderi{n^3/(1-\lambda)^4}$ \cite{lian2017can} to $\orderi{n^3/(1-\lambda)^2}$ \cite{alghunaim2021unified,huang2023cedas}, and then to $\orderi{n^{5/3}/(1-\lambda)}$ \cite{huang2025accelerated}, where $(1-\lambda)$ denotes the spectral gap of the mixing matrix associated with the underlying communication network.
When the objective function further satisfies the PL condition, the transient time has been improved from $\orderi{n/(1-\lambda)^2}$ \cite{pu2021sharp} to $\orderi{n/(1-\lambda)}$ \cite{huang2021improving,alghunaim2021unified}, and more recently to $\orderi{\sqrt{n/(1-\lambda)}}$ \cite{huang2025accelerated} when the spectral gap is known.\footnote{A different line of works achieve the optimal iteration complexity by incorporating multiple inner loops of communication and large batches of stochastic gradients \cite{lu2021optimal,yuan2022revisiting,xin2021stochastic}, which may not be applicable in practice \cite{ardakani2024slimfit,marek2025small}.}

In the presence of communication bottleneck, a natural mitigation strategy is to perform multiple local (stochastic) gradient steps between consecutive communication rounds. 
For example, the works in \cite{koloskova2020unified,li2019communication} combine local updates with the Distributed Stochastic Gradient Descent (DSGD) \cite{lian2017can,pu2021sharp} algorithm, but such methods can exacerbate the effects of data heterogeneity \cite{huang2023distributed}, potentially slowing convergence and offsetting the benefits of reduced communication.
To address this limitation, subsequent research has focused on integrating local updates with more sophisticated strategies that alleviate data heterogeneity, such as Distributed Stochastic Gradient Tracking (DSGT) \cite{pu2021distributed} and Exact Diffusion (ED)/$D^2$~\cite{yuan2018exact,tang2018d}. 
For instance, the works in \cite{ge2023gradient,huang2023computation,liu2024decentralized} combine local updates with the gradient tracking technique \cite{xu2015augmented,di2016next,nedic2017achieving}, where the K-GT method \cite{liu2024decentralized} particularly enhances computational efficiency by performing gradient tracking only during communication steps.
The Local Exact Diffusion (LED) method \cite{alghunaim2024local} incorporates local updates into ED and recovers the convergence rate of ED when a single local update is performed. The work in  \cite{yang2024accelerating} shows that, with sufficiently many local updates to solve a subproblem, the optimal communication complexity can be achieved for minimizing smooth objective functions. However, this method can increase sample complexity and does not support an arbitrary number of local updates.

Momentum \cite{polyak1964some} is a classical technique for accelerating first-order methods, and its integration with distributed optimization has been an active research direction \cite{yuan2021decentlam,wang2021distributed,huang2025accelerated}. 
Nevertheless, effectively integrating momentum with local updates remains challenging. Recent attempts to merge these techniques face limitations: for instance, the work in \cite{gao2020periodic} does not establish theoretical benefits from momentum, while the methods in \cite{du2024unified} cannot drive the optimality measure to zero.

Chebyshev Acceleration (CA) is a classical technique for accelerating the consensus rate among networked agents \cite{liu2011accelerated}. In previous works, CA has been applied in several distributed optimization methods to improve convergence with respect to the spectral gap \cite{lu2021optimal,yuan2022revisiting,kovalev2020optimal}. However, CA requires executing inner loops of multiple communication rounds between local updates, which can lead to unsatisfactory practical performance \cite{qu2019accelerated,xiao2023one}. By contrast, the work in \cite{song2021optimal} introduces the Loopless Chebyshev Acceleration (LCA) technique, which relieves from such constraints.

\subsection{Main Contribution}

The main contribution of this paper is three-fold.

First, we propose the \DSMTL~method, which achieves state-of-the-art communication complexity for minimizing smooth (smoothness modulus $L$) objective functions, both with and without the PL condition (modulus $\mu$). Notably, these results are achieved without requiring large batches of stochastic gradients. As summarized in Tables~\ref{tab:comp_ncvx}~and~\ref{tab:comp_scvx},  \DSMTL~outperforms existing distributed stochastic gradient methods that incorporate local updates. The improvements arise from a novel algorithmic design that integrates the local update scheme, momentum tracking, and the LCA technique.

Second, we establish that \DSMTL~achieves the optimal communication complexity when sufficiently many local updates are performed. Specifically, to obtain an $\varepsilon$-solution\footnote{$\sum_{t=0}^{T-1}\normi{\nabla f(\bar{x}_t)}^2/T\leq \varepsilon^2$ ($\bar{x}_t$ stands for the average solution among agents).}, \DSMTL~attains the optimal communication complexity
\begin{align*}
    \order{\frac{L\Delta_f }{\sqrt{1-\lambda}\varepsilon^2}}
\end{align*}
when $Q = \ceili{\sqrt{1-\lambda}\sigma^2/(n\varepsilon^2)}$,
where $\Delta_f := f(\bar{x}_0) - \inf_x f(x)$, and $\sigma^2$ bounds the stochastic gradient variance. Under the additional PL condition, \DSMTL~attains an optimal communication complexity (with respect to graph specifics) of 
\begin{align*}
    \order{\frac{1}{\sqrt{1-\lambda}}\log\frac{1}{\varepsilon}}
\end{align*}
to reach an $\varepsilon$-solution\footnote{$\sumn\E\brki{f(x_{i,t}) - f^*}/n\leq \varepsilon$.}. To our knowledge, LMT is the first distributed stochastic gradient method with local updates that achieves these optimal communication complexities.

Third, for any integer $Q\in[1, \ceili{\sqrt{1-\lambda}\sigma^2/(n\varepsilon^2)}]$, LMT achieves the sample complexity
\begin{align*}
    \order{\frac{L\Delta_f \sigma^2}{n\varepsilon^4} + \frac{\Delta_f L}{\sqrt{1-\lambda}\varepsilon^2}},
\end{align*}
which matches the optimal iteration complexity. This implies: 
\begin{enumerate}
    \item LMT can achieve the optimal communication and iteration complexities simultaneously;
    \item When a single local update is performed per communication round ($Q=1$), \DSMTL~attains the optimal transient time $\orderi{\nicefrac{n}{(1-\lambda)}}$.
\end{enumerate}
These results are new to our knowledge.
Similarly, under the PL condition, LMT can achieve optimal communication and iteration complexities simultaneously when $1-\lambda\sim\orderi{1/n^2}$, which generally holds for sparse undirected graphs including rings and lines \cite{nedic2018network}.
For $Q=1$, the transient time 
$$\order{\max\crk{\frac{1}{\sqrt{1-\lambda}}, n}}$$
outperforms existing results and becomes optimal when $1-\lambda\sim\orderi{1/n^2}$. 

\begin{table}[htbp]
    \centering
    \setlength{\tabcolsep}{3pt}
\begin{tabular}{@{}cccc@{}}
\toprule
Method                                               & Communication Complexity                                                                                                                                                      & \makecell[c]{Communication\\ Complexity \\ (Large $Q$)}  & \makecell[c]{Optimal \\ Iteration\\  Complexity?} \\ \midrule
LSGT\footnotemark[1] \cite{ge2023gradient}                          & $\order{\frac{\sigma^2}{(1-\lambda)^8 Q\varepsilon^4} + \frac{1}{(1-\lambda)^{8/3}n Q^{1/3}\varepsilon^{4/3}} }$                                                              & /                                                      & \xmark                              \\
PD-SGDM \cite{gao2020periodic}                       & $\order{\frac{\sigma^2}{nQ\varepsilon^4} + \frac{G }{(1-\lambda)\varepsilon^3} }$                                                                                             & $\order{\frac{G }{(1-\lambda)\varepsilon^3}}$          & \xmark                              \\
K-GT \cite{liu2024decentralized}                     & $\order{\frac{\sigma^2}{nQ\varepsilon^4} + \frac{\sigma}{(1-\lambda)^2\sqrt{Q}\varepsilon^3} + \frac{1}{(1-\lambda)^2\varepsilon^2} }$                                        & $\order{\frac{1}{(1-\lambda)^2\varepsilon^2}}$         & \xmark                              \\
\makecell[c]{Local DSGD \cite{koloskova2020unified}} & $\order{\frac{\sigma^2}{nQ\varepsilon^4} + \prt{\frac{\sigma}{\sqrt{Q(1-\lambda)}} + \frac{\zeta}{1-\lambda} }\frac{1}{\varepsilon^3} + \frac{1}{(1-\lambda)\varepsilon^2} }$ & $\order{\frac{\zeta}{(1-\lambda)\varepsilon^3} }$      & \xmark                              \\
LED \cite{alghunaim2024local}                        & $\order{\frac{\sigma^2}{nQ\varepsilon^4} + \frac{\sigma}{\sqrt{Q(1-\lambda)}\varepsilon^3} + \frac{1}{(1-\lambda)\varepsilon^2} }$                                            & $\order{\frac{1}{(1-\lambda)\varepsilon^2}}$           & \xmark                              \\ \midrule
\makecell[c]{\textbf{\DSMTL~}\\ (This work)}         & $\bs{\order{\frac{\sigma^2}{nQ\varepsilon^4} + \frac{1}{\sqrt{1-\lambda}\varepsilon^2}}}$                                                                                     & $\bs{\order{\frac{1}{\sqrt{1-\lambda}\varepsilon^2}}}$ & \cmark                              \\ \bottomrule
\end{tabular}
\footnotetext[1]{LSGT cannot handle arbitrary large $Q$.}
\caption{Performance comparison of recent distributed stochastic gradient algorithms with local updates. The objective functions are assumed to have Lipschitz-continuous gradients. The parameter $\sigma^2$ bounds the stochastic gradient variance, i.e., $\E\brki{\normi{g_i(x;\xi) - \nabla f_i(x)}^2|x}\leq \sigma^2$, and $\zeta^2$ bounds the data heterogeneity, i.e., $\sumn\normi{\nabla f(x) - \nabla f_i(x)}^2/n\leq \zeta^2$. The parameter $G$ bounds the stochastic gradient norm, with $\E\brki{\normi{g_i(x;\xi)}^2|x}\leq G^2$. The last column indicates whether the method achieves the optimal iteration complexity of $\orderi{\sigma^2/(n\varepsilon^4) + 1/(\sqrt{1-\lambda}\varepsilon^2)}$. For LMT, the optimal iteration complexity is achieved for any integer $Q\in[1, \ceili{\sqrt{1-\lambda}\sigma^2/(n\varepsilon^2)}]$.}
\label{tab:comp_ncvx}
\end{table}

\begin{table}[htbp]
    \centering
\begin{tabular}{@{}cccc@{}}
\toprule
Method                                       & Communication Complexity                                                                                                                     & \makecell[c]{Communication \\ Complexity \\ (Large $Q$)}              & \makecell[c]{Optimal Iteration\\  Complexity?} \\ \midrule
FlexGT \cite{huang2023computation}           & $\torder{\frac{\sigma^2}{nQ\varepsilon} + \frac{\sigma}{\sqrt{(1-\lambda)^3 \varepsilon}} }$                                                 & $\order{\frac{\sigma}{\sqrt{(1-\lambda)^3 \varepsilon}}}$          & \xmark                              \\
Local DSGD \cite{koloskova2020unified}       & $\torder{\frac{\sigma^2}{nQ\varepsilon} + \prt{\frac{\sigma}{\sqrt{(1-\lambda) Q}} + \frac{\zeta}{1-\lambda} }\frac{1}{\sqrt{\varepsilon}}}$ & $\order{\frac{\zeta}{(1-\lambda)\sqrt{\varepsilon}}}$              & \xmark                              \\
LED \cite{alghunaim2024local}                & $\torder{\frac{\sigma^2}{nQ\varepsilon} + \frac{\sigma}{\sqrt{Q(1-\lambda) \varepsilon}} }$                                                  & $\order{\frac{1}{1-\lambda}\log\frac{1}{\varepsilon}}$             & \xmark                              \\ \midrule
\makecell[c]{\textbf{\DSMTL~}\\ (This work)} & $\bs{\order{\frac{\sigma^2}{nQ\varepsilon} + \frac{\sigma}{\sqrt{\mu Q \varepsilon}} + \frac{1}{\sqrt{1-\lambda}}\log\frac{1}{\varepsilon}}}$                                                     & $\bs{\order{\frac{1}{\sqrt{1-\lambda}}\log\frac{1}{\varepsilon}}}$ &  Nearly                          \\ \bottomrule
\end{tabular}
\caption{Performance comparison of recent distributed stochastic gradient algorithms with local updates. The objective functions are assumed to have Lipschitz-continuous gradients and satisfy the PL condition. The last column indicates whether the method achieves the optimal iteration complexity of $\orderi{\sigma^2/(n\varepsilon) + 1/(\sqrt{1-\lambda})\log(1/\varepsilon)}$. For LMT, the optimal iteration complexity is achieved for any integer $Q\in[1, \ceili{\sigma^2/(n^2\varepsilon \log(1/\varepsilon))}]$ when $1-\lambda\sim\orderi{1/n^2}$.}
\label{tab:comp_scvx}
\end{table}

\subsection{Notation and Assumptions}

Throughout this paper, vectors are assumed to be column vectors unless stated otherwise. We denote by $x_{i,t}^\ell \in\R^p$ the iterate of agent $i$ at the $\ell$-th local update following the $t$-th round of communication. For clarity of notation and presentation, we introduce the following stacked variables:
\begin{align*}
    \x_t^\ell&:= (x_{1,t}^\ell, x_{2,t}^\ell, \ldots, x_{n,t}^\ell)^{\T}\in\R^{n\times p},\\
    \nabla F (\x_t^\ell) &:= \prt{\nabla f_1(x_{1,t}^\ell),\nabla f_2(x_{2,t}^\ell),\ldots, \nabla f_n(x_{n,t}^\ell)}^{\T}\in\R^{n\times p},\\
    A_{\#}&:=\begin{pmatrix}
        A\\
        A
    \end{pmatrix} \in\R^{2n\times p}.
\end{align*}
We denote by $\bar{x}\in\R^p$ the average of variables across agents. For instance, the variable $\bar{x}_t:= 1/n\sumn x_{i, t}$ represents the average of all the agents' iterates at the $t$-th communication round. 
We use $\normi{\cdot}$ to denote the Frobenius norm for a matrix and the $\ell_2$ norm for a vector. The notation $\inproi{a, b}$ denotes the inner product of two vectors $a, b\in\R^{p}$. For two matrices $A, B\in\R^{n\times p}$, the inner product $\inproi{A, B}$ is defined as $\inproi{A, B} := \sum_{i=1}^n\inproi{A_i, B_i}$, where $A_i$ (and $B_i$) represents the $i$-row of $A$ (and $B$), respectively.

We next introduce the standing assumptions.

\begin{assumption}
    \label{as:abc}
    Each agent $i\in\cN$ has access to an unbiased stochastic gradient $g_i(x;\xi_i)$ of $\nabla f_i(x)$, i.e., $\condEi{g_i(x;\xi_{i})}{x} = \nabla f_i(x)$, and there exists $\sigma\geq 0$ such that for any $i\in\cN$,
    \begin{align}
        \label{eq:abc}
        \condE{\norm{g_{i}(x;\xi_{i}) - \nabla f_i(x)}^2}{x}&\leq 
        \sigma^2.
    \end{align} 
    In addition, the stochastic gradients are independent across different agents at each $t\geq 0$. 
\end{assumption}
Note that Assumption~\ref{as:abc} can be relaxed to the more general ABC condition \cite{khaled2020better,lei2019stochastic,huang2023distributed}. The convergence results for LMT under this extension follow from procedures similar to those in \cite{huang2025accelerated}.

Assumption \ref{as:smooth} is standard that requires the objective functions to be smooth and lower bounded.

\begin{assumption}
    \label{as:smooth}
    Each $f_i(x):\R^p\rightarrow\R$ is $L$-smooth, i.e., 
    \begin{align*}
        \norm{\nabla f_i(x) - \nabla f_i(x')}\leq L\norm{x - x'}, \ \forall x,x'\in\R^p.
    \end{align*}
    In addition, 
    $f(x)$ is bounded below, i.e., $f(x)\geq f^*:= \inf_{x\in\R^p} f(x)>-\infty$ for any $x\in\R^p$.
\end{assumption}

The next assumption is standard in the distributed optimization literature and specifies the properties of the communication network. Suppose the agents are connected via a graph $\mathcal{G} = (\cN, \mathcal{E})$ with $\mathcal{E}\subseteq \cN\times \cN$ representing the set of edges connecting the agents. In particular, $(i,i)\in\mathcal{E}$ for all $i\in\mathcal{N}$. The set of neighbors of agent $i$ is denoted by $\mathcal{N}_i=\{j\in \mathcal{N}:(i,j)\in \mathcal{E}\}$.
The element $w_{ij}$ in the weight matrix $W\in\mathbb{R}^{n\times n}$ denotes the weight of the edge between agents $i$ and $j$.

\begin{assumption}
    \label{as:graph_p}
    The graph $\mathcal{G}$ is undirected and connected, i.e., there exists a path between any two nodes in $\mathcal{G}$. There is a direct link between $i$ and $j$ $(i\neq j)$ in $\mathcal{G}$ if and only if $w_{ij}>0$ and $w_{ji}>0$; otherwise, $w_{ij}=w_{ji}=0$. The mixing matrix $W$ is nonnegative, stochastic, and symmetric: $W\1=\1$ and $W^{\T} = W$, and $W$ is positive semidefinite.
\end{assumption} 

Note that Assumption~\ref{as:graph_p} guarantees that $W$ is doubly stochastic and the spectral norm $\lambda$ of the matrix $(W - \1\1^{\T}/n)$ satisfies $\lambda<1$.

Assumption~\ref{as:PL} characterizes a specific nonconvex condition known as the Polyak-{\L}ojasiewicz (PL) condition. Overparameterized models often satisfy this condition \cite{song2023fedavg}; notably, strong convexity implies the PL condition \cite{karimi2016linear}.

\begin{assumption}
    \label{as:PL}
    There {exists} $\mu>0$ such that the global objective $f(x)=\frac{1}{n}\sum_{i=1}^n f_i(x)$ satisfies
    \begin{align}
        \label{eq:PL}
        2\mu\prt{f(x)-f^*} \leq \norm{\nabla f(x)}^2,
    \end{align}
    for all $x \in \R^p$, where $f^*= \inf_{x\in\R^p}f(x)$.
\end{assumption}

\section{The Local Momentum Tracking Method}
\label{sec:pre_alg}

In this section, we introduce the proposed algorithm, Local Momentum Tracking (\DSMTL), which combines three key components: momentum tracking, local updates, and the LCA technique. We first elaborate on the motivation and design of each component.

\textbf{Momentum Tracking}. 
Without local updates, the core idea of momentum tracking is captured by the update rule \eqref{eq:dsmt_z_nolca} \cite{huang2025accelerated}, where we define the tracking variables $\y_t:= \prti{y_{1,t}, y_{2,t},\ldots, y_{n,t}}^{\T}\in\R^{n\times p}$ and the momentum variables $\z_t:= \prti{z_{1,t}, z_{2,t},\ldots, z_{n,t}}^{\T}\in\R^{n\times p}$.
It follows from $\1^{\T} W = \1^{\T}$ that the tracking property $\sumn y_{i,t} = \sumn z_{i,t}$ holds for all $t\geq 0$.
 \begin{align}
        \z_{t + 1} &= \beta\z_t + (1-\beta)\g_{t + 1}\label{eq:dsmt_z}\\
        \y_{t + 1} 
        &= W\y_t + \z_{t + 1} - \z_t,\; \y_0 = \z_0. \label{eq:dsmt_z_nolca}
    \end{align}
Compared with the standard gradient tracking method, the momentum parameter $\beta$ enables finer control over the stochastic gradient variance. Here, $\g_t:= \prti{g_1(x_{1,t};\xi_{1,t}), \ldots, g_n(x_{n,t};\xi_{n,t})}^{\T}\in\R^{n\times p}$ stacks the stochastic gradients of all agents. Substituting \eqref{eq:dsmt_z} into \eqref{eq:dsmt_z_nolca} yields
\begin{align}
        \y_{t + 1} 
        &= \g_{t + 1} + \beta\prt{\z_t - \g_{t + 1}} + W\y_t - \z_t,\label{eq:dsmt_g_nolca}
    \end{align}
which highlights the role of $\y_t$ as corrected stochastic gradients. This observation motivates the subsequent local update procedure, wherein each agent updates multiple times under a corrected stochastic gradient direction before the next communication round.

\textbf{Local Updates}. 
Building on the interpretation of $\y_t$ as corrected stochastic gradients, we design the local update step as follows. The agents perform $Q$ local steps using the stochastic gradients $\g_t^\ell:= \prti{g_1(x_{1,t}^\ell;\xi_{1,t}^\ell), \ldots, g_n(x_{n,t}^\ell;\xi_{n,t}^\ell)}^{\T}$ along with the correction terms $\bc_t:= \prti{c_{1,t}, c_{2,t},\ldots, c_{n,t}}^{\T}$:
\begin{align}
    \x_t^{\ell + 1} &= \x_t^\ell - \eta_a\prt{\g_t^\ell + \bc_t},\; \x_t^0 = \x_t, \;\ell = 0,1,\ldots, Q-1. \label{eq:mot_xtell_local}
\end{align}
After completing all $Q$ local updates, the momentum variables $\z_t$ are updated by aggregating the local stochastic gradients:
\begin{align}
    \z_{t + 1} &= \beta \z_t + \frac{1-\beta}{Q}\sum_{\ell=0}^{Q-1}\g_t^\ell,\; \z_0 = \mathbf{0},\label{eq:mot_dsmtl_localz}
\end{align}
where the averaged stochastic gradients can be computed as $[\prti{\x_t - \x_t^Q}/(\eta_a Q) - \bc_t]$.

Motivated by the reformulation in \eqref{eq:dsmt_g_nolca}, we design the corrections $\bc_t$ to incorporate both the momentum terms and the information exchanged between neighboring nodes. Specifically, $\bc_t$ are updated using the local momentum information as well as the communicated variables:
\begin{equation}
    \label{eq:mot_ytct}
    \begin{aligned}
        \y_t = \z_{t + 1} + \bc_t,\;
        \bc_{t + 1} = \bc_t - \y_t + W\y_t.
    \end{aligned}
\end{equation} 
The design in \eqref{eq:mot_ytct} enables $\y_t$ to track the momentum variables:
\begin{equation}
    \label{eq:mot_yt}
    \begin{aligned}
        \y_{t + 1} &= \z_{t + 2} + \bc_{t + 1}\\
        &=  \bc_t -\y_t + W\y_t + \z_{t + 2} - \z_{t + 1} + \z_{t + 1}\\
        &= W\y_t + \z_{t + 2} - \z_{t + 1}.
    \end{aligned}
\end{equation}
This formulation preserves the structure of the original momentum tracking update \eqref{eq:dsmt_z_nolca}, ensuring that the momentum parameter $\beta$ contributes to the consensus process via the coefficient $(1-\beta)$ in the difference term $\z_{t + 1} - \z_{t} = (1-\beta)[\sum_{\ell=0}^{Q-1}\g_t^\ell /Q - \z_t]$ (see \cite[Section 2.1]{huang2025accelerated} for details). 

Finally, each agent performs an approximate gradient step to update the model parameters:
\begin{align}
    \label{eq:mot_xt}
    \x_{t + 1} = W\prt{\x_t - Q\eta_s\eta_a \y_t}.
\end{align}

\begin{remark}
    To effectively integrate local updates with momentum tracking, it is essential that the momentum variables $\z_{t}$ are updated after all local updates, using the average of the local stochastic gradients. This design is crucial for preserving the coefficient $(1-\beta)$. 
A naive alternative that updates the momentum variable at each local step disrupts the acceleration mechanism by eliminating the factor $(1-\beta)$
(see Remark~\ref{rem:fail} for a detailed discussion). 
\end{remark}

\textbf{Loopless Chebyshev Acceleration.} 
To further accelerate the consensus rate, we integrate the LCA technique into the proposed scheme \eqref{eq:mot_ytct} and \eqref{eq:mot_xt}. LCA achieves an accelerated consensus rate of $\orderi{1-\sqrt{1-\lambda^2}}$, as formalized in Lemma~\ref{lem:lca} \cite{song2021optimal}.

\begin{lemma}
    \label{lem:lca}
    Given a symmetric and positive semidefinite mixing matrix $W$, define $\eta_w := 1/(1 + \sqrt{1 - \lambda^2})$. Then $\trw:= \sqrt{\eta_w}\sim\orderi{1-\sqrt{1-\lambda^2}}$, and for any $A\in\R^{n\times p}$ and $k\geq 0$, we have
    \begin{align*}
        \norm{\tpi\tW^k\tpi A_{\#}}^2 \leq c_0 \trw^{2k}\norm{\Pi A}^2,
    \end{align*}
	where 
	\begin{align*}
    c_0 = 14,\ \
		\tpi:= \begin{pmatrix}
			\Pi & \0 \\
			\0 & \Pi
		\end{pmatrix},\ \Pi:= I - \frac{\1\1^{\T}}{n},\ A_{\#}:= \begin{pmatrix}
			A\\
			A
		\end{pmatrix},\ \tW := \begin{pmatrix}
			(1 + \eta_w)W & -\eta_w I\\
			I & \0
		\end{pmatrix}.
	\end{align*}
\end{lemma}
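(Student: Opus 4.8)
The plan is to prove Lemma~\ref{lem:lca} by reducing the action of $\tW$ on the ``disagreement'' subspace to a $2\times 2$ matrix polynomial whose entries are Chebyshev polynomials, then bounding its norm. First I would observe that $\tpi$ commutes with $\tW$: since $W\1=\1$ and $W^\T=W$, the matrix $\Pi W = W\Pi$, so $\tpi\tW = \tW\tpi$ and hence $\tpi\tW^k\tpi A_\# = \tW^k\tpi A_\#$. Writing $\tpi A_\# = (\Pi A;\Pi A) = (\Pi A)_\#$, it therefore suffices to understand $\tW^k$ restricted to vectors of the ``stacked'' form $(v;v)$ with $v\in\mathrm{range}(\Pi)$, equivalently to understand how the iteration $(u_{k+1};u_{k}) = \tW(u_k;u_{k-1})$, i.e. the second-order recursion $u_{k+1} = (1+\eta_w)Wu_k - \eta_w u_{k-1}$ with $u_0=u_{-1}=\Pi A$, evolves. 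Diagonalizing $W$ (or just working eigenvalue-by-eigenvalue using that $\Pi$ projects onto the span of eigenvectors with eigenvalue in $[-\lambda,\lambda]$ since $W\succeq 0$), each scalar mode $w\in[0,\lambda]$ gives a linear recursion whose solution is $u_k = p_k(w)\,\Pi A$ for an explicit polynomial $p_k$.

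The key step is to identify $p_k$ with a (shifted, normalized) Chebyshev polynomial and bound $|p_k(w)|$ uniformly for $w\in[-\lambda,\lambda]$. With the standard change of variables, the recursion $p_{k+1}(w) = (1+\eta_w) w\, p_k(w) - \eta_w p_{k-1}(w)$, $p_0=p_{-1}=1$, has solution expressible via $T_k$, the degree-$k$ Chebyshev polynomial of the first kind, composed with an affine map sending $[-\lambda,\lambda]$ into $[-1,1]$; the choice $\eta_w = 1/(1+\sqrt{1-\lambda^2})$ is exactly the one that makes the recursion the Chebyshev one and makes the geometric decay rate equal to $\trw = \sqrt{\eta_w}$. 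Using $|T_k|\le 1$ on $[-1,1]$ together with the growth of $T_k$ slightly outside that interval (to handle the normalization constant, because $p_k$ must satisfy $p_k(1)=1$ from the tracking structure), one gets $|p_k(w)| \le c_0^{1/2}\,\trw^{k}$ for all $w\in[-\lambda,\lambda]$, with $c_0$ an absolute constant — the value $14$ coming from a concrete bound on $1/T_k(1/\lambda)$-type quantities. Summing the squared modal contributions (Parseval, since $W$ is symmetric so its eigenvectors are orthonormal) then yields $\norm{\tpi\tW^k\tpi A_\#}^2 = \sum_{\text{modes }w} |p_k(w)|^2 \|(\text{component of }\Pi A)\|^2 \le c_0\trw^{2k}\norm{\Pi A}^2$.

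I expect the main obstacle to be the constant-tracking in the Chebyshev bound: $T_k$ is bounded by $1$ on $[-1,1]$, but the normalized polynomial $p_k$ we need is $T_k$ evaluated on $[-\lambda,\lambda]$ and rescaled so that $p_k$ behaves correctly — more precisely $p_k(w) = \trw^k\,\tfrac{T_k(w/\lambda')}{T_k(1/\lambda')}$-ish for an appropriate $\lambda'$, and the denominator $T_k(1/\lambda')$ must be shown to be bounded below by a constant times $\trw^{-k}$ so that the ratio is $O(\trw^k)$ rather than merely $O(1)$. This requires the two-sided estimate $T_k(1/\lambda') = \tfrac12(\rho^k + \rho^{-k})$ with $\rho = (1+\sqrt{1-\lambda'^2})/\lambda'$, i.e. relating $\rho$ to $1/\trw$, and then a clean bound like $T_k(1/\lambda')\ge \tfrac12\trw^{-k}$, which after squaring and accounting for the factor-of-two and the stacked (doubled) structure of $\tpi$ produces the explicit $c_0=14$. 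Everything else — the commutation of $\tpi$ and $\tW$, the reduction to scalar recursions, and the Parseval summation — is routine linear algebra once the modal picture is set up; alternatively this lemma may simply be cited wholesale from \cite{song2021optimal,song2021optimalb} with only the commutation and Parseval steps spelled out.
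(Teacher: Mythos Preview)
The paper does not prove Lemma~\ref{lem:lca}; it simply states the result and cites \cite{song2021optimal} (the same bound is also quoted verbatim in \cite{huang2025accelerated}). Your final remark is therefore exactly what the paper does: the lemma is imported wholesale.

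Your from-scratch sketch is the standard Chebyshev-acceleration argument and matches in spirit what the cited reference does. Two loose ends in your write-up are worth tightening if you want an honest self-contained proof rather than a citation. First, with initial data $p_0=p_{-1}=1$ the solution of the scalar recursion is a specific linear combination of first- and second-kind Chebyshev polynomials (equivalently, $p_k(w)=\trw^k\bigl(\alpha(w)\cos k\theta(w)+\beta(w)\sin k\theta(w)\bigr)$ with $\cos\theta(w)=(1+\eta_w)w/(2\trw)$), not literally $\trw^k T_k(\cdot)/T_k(\cdot)$; the uniform bound on the amplitude $\sqrt{\alpha^2+\beta^2}$ over $w\in[0,\lambda]$ is where the constant sits, and the choice $\eta_w=1/(1+\sqrt{1-\lambda^2})$ is what keeps this amplitude bounded by an absolute constant independent of $\lambda$. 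Second, $\norm{\tW^k(\Pi A)_\#}^2=\norm{v_k}^2+\norm{v_{k-1}}^2$ involves \emph{both} blocks, so you need the modal bound at indices $k$ and $k-1$; this is what ultimately pushes the constant up to $c_0=14$. Neither point is a gap in the approach --- they only affect bookkeeping for the constant, not the rate $\trw^{2k}$.
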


Applying LCA to \eqref{eq:mot_ytct} is nontrivial: standard LCA accelerates a linear iteration of the form $\y_{t + 1} = W \y_t$ by introducing a memory term $\y_{t - 1}$, resulting in $\y_{t + 1} = (1 + \eta_w) W\y_t -\eta_w \y_{t - 1}$. However, the update in \eqref{eq:mot_ytct} does not conform to this standard structure, making direct application of LCA challenging.
To resolve this, we introduce an auxiliary variable $\y_t^{(l)}$ serving as the memory term required for LCA:
\begin{equation}
    \label{eq:ytlp1}
    \begin{aligned}
        \y_{t}^{(l)} &= \z_{t + 1} + \bc_{t-1}.
    \end{aligned}
\end{equation}

Combining \eqref{eq:ytlp1} with the update $\y_t = \z_{t + 1} + \bc_t$ from \eqref{eq:mot_ytct} (with LCA), we obtain the following scheme:
\begin{equation}
    \label{eq:typ1}
    \begin{aligned}
        \ty_{t + 1} &= \begin{pmatrix}
            (1 + \eta_w)W & -\eta_w I\\
            I & \mathbf{0}
        \end{pmatrix}
        \begin{pmatrix}
            \y_t \\
            \y_t^{(l)}
        \end{pmatrix} + \begin{pmatrix}
            \z_{t + 2} - \z_{t + 1}\\
            \z_{t + 2} - \z_{t + 1}
        \end{pmatrix},\; \ty_t:= \begin{pmatrix}
        \y_t\\
        \y_{t}^{(l)}
    \end{pmatrix},
    \end{aligned}
\end{equation}
which allows us to directly utilize Lemma~\ref{lem:lca}.
  
\textbf{The Local Momentum Tracking Method.} 
Integrating the three components discussed above, we present the complete \DSMTL~method. The updates for the $t$-th communication round are given in the compact form below:
\begin{subequations}
    \label{eq:dsmtl}
    \begin{align}
        \x_t^{\ell + 1} &= \x_t^\ell - \eta_a\prt{\g_t^\ell + \bc_t},\; \x_t^0 = \x_t, \;\ell = 0,1,\ldots, Q-1. \label{eq:xtell_local}\\
        \br_{t} &= \frac{1}{\eta_a Q}\prt{\x_t - \x_t^Q} - \bc_t \\
        \z_{t + 1} &= \beta \z_t + (1-\beta)\br_t,\; \z_0 = \mathbf{0}\label{eq:dsmtl_localz}\\
        \y_t &= \z_{t + 1}  + \bc_t \label{eq:yt}\\
        \y_t^{(l)} &= \z_{t + 1} + \bc_{t -1}\label{eq:ytl}\\
        \bc_{t + 1} &= \bc_t - \y_{t} + (1 + \eta_w)W \y_t - \eta_w \y_{t}^{(l)}\label{eq:bct}\\
        \x_{t + 1} &= (1 + \eta_w)W\prt{\x_t - Q\eta_s\eta_a\y_t} - \eta_w \prt{\x_t^{(l)} - Q\eta_s\eta_a\y_t}\label{eq:xtp1}\\
        \x_{t + 1}^{(l)} &= \x_t - Q\eta_s\eta_a\y_t.\label{eq:xtlp1}
    \end{align}
\end{subequations}
Updates \eqref{eq:xtp1} and \eqref{eq:xtlp1} can be combined into a single accelerated consensus step:
\begin{equation}
    \label{eq:txp1}
    \tx_{t + 1} = \tW\prt{\tx_t - Q\eta_s\eta_a \prt{\y_t}_{\#}},\; \tx_t:=\begin{pmatrix}
        \x_t\\
        \x_t^{(l)}
    \end{pmatrix}.
\end{equation}

\begin{remark}
    Comparing the compact form \eqref{eq:dsmtl} with DSMT, the main differences lie in the local steps \eqref{eq:xtell_local} and the momentum variable update \eqref{eq:dsmtl_localz}.
    Thus, \DSMTL~extends DSMT in two key aspects: (1) incorporating multiple local updates, and (2) redesigning the momentum update to preserve acceleration.
    Notably, \DSMTL~reduces to DSMT when $Q = 1$.
\end{remark}

The formal description of \DSMTL~is given in Algorithm~\ref{alg:dsmtl}. At the $t$-th communication round, agent $i$ first performs $Q$ local steps using the corrected stochastic gradient (Line~\ref{line:dsmtl_localx}). The resulting local stochastic gradients are averaged to update the momentum variable $z_{i,t + 1}$ (Line~\ref{line:zt}). 
Then, the tracking variable $y_{i,t}$ and the correction $c_{i,t+1}$ are updated through communication with neighboring agents (Lines~\ref{line:yt} and \ref{line:ct}). 
Finally, agent $i$ performs an approximate gradient descent step using $y_{i,t}$ in Line~\ref{line:xt} and updates $x_{i,t+1}$ through an accelerated communication step (Line~\ref{line:xtp1}).

\begin{algorithm}[h]
	\begin{algorithmic}[1]
		\State Initialize $x_{i,0} = x_{i,0}^{(l)}\in\R^p$ for all agent $i\in\mathcal{N}$, determine $W = [w_{ij}]\in\R^{n\times n}$, parameters $Q$, $\eta_a$, $\eta_s$, and $\beta$. 
        Set $z_{i,0} = \mathbf{0}$ and $c_{i,0} = c_{i,-1} = \mathbf{0}$ for any $i\in\cN$. Input $\eta_w$.
		\For{$t=0, 1, 2, \ldots, T-1$}
		\For{Agent $i = 1, 2, \ldots, n$ in parallel}
        \State $x_{i,t}^0 = x_{i,t}$
        \For{Local update $\ell = 0,1,\ldots,Q-1$}
        \State Acquires a stochastic gradient $g_{i,t}^{\ell} = g_i(x_{i, t}^{\ell};\xi_{i, t}^{\ell})\in\R^p$\label{line:dsmtl_g}
		\State $x_{i,t}^{\ell + 1} = x_{i,t}^\ell - \eta_a(g_{i,t}^\ell + c_{i,t})$\label{line:dsmtl_localx}
        \EndFor
        \State $r_{i,t} = (x_{i,t} - x_{i,t}^Q)/(\eta_aQ) - c_{i,t}$\label{line:rt}
		\State $z_{i,t + 1} = \beta z_{i,t} + (1 - \beta)r_{i,t}$ \label{line:zt}
        \State $y_{i,t} = z_{i,t + 1} + c_{i,t}$, $y_{i,t}^{(l)} = z_{i,t + 1} + c_{i, t-1}$\label{line:yt}
        \State $c_{i,t + 1} = c_{i,t} - y_{i,t} + (1 + \eta_w)\sum_{j\in\cN_i}w_{ij}y_{j,t} - \eta_w y_{i,t}^{(l)}$\label{line:ct}
		\State $x_{i,t + \frac{1}{2}} = x_{i,t} - Q\eta_s\eta_a y_{i,t}$,\; $x_{i,t+\frac{1}{2}}^{(l)}= x_{i,t}^{(l)} - Q\eta_s\eta_a y_{i,t}$\label{line:xt}
        \State $x_{i, t + 1} = (1 + \eta_w)\sum_{j\in\cN_i}w_{ij} x_{j,t + \frac{1}{2}}-\eta_w x_{i,t + \frac{1}{2}}^{(l)}$,\; $x_{i,t + 1}^{(l)} = x_{i, t + \frac{1}{2}}$\label{line:xtp1}
		\EndFor
		\EndFor
	\end{algorithmic}
	\caption{Local Momentum Tracking (\DSMTL)}
	\label{alg:dsmtl}
\end{algorithm}

\section{Preliminary Analysis}
In this section, we present several preliminary results that form the foundation for proving the main convergence theorems. The analysis extends the framework developed for DSMT, with novel contributions addressing two distinguishing features of the \DSMTL~method: the local update scheme (Lemma~\ref{lem:sum_ub}) and the aggregated momentum variables (Lemmas~\ref{lem:barz} and \ref{lem:ztf}).
We organize the analysis into three parts.
First, we establish recursions for the averaged variables $\bar{x}_t$ and $\bar{z}_t$ (Lemmas~\ref{lem:avg}-\ref{lem:sum_ub}).
Then, we derive recursions for the consensus error terms (Lemmas~\ref{lem:tPitx} and \ref{lem:tPi_cRy}) and the momentum variables (Lemma~\ref{lem:ztf}). 
Finally, we construct a Lyapunov function by combining the preceding results and establish its approximate descent property (Lemma~\ref{lem:tcL}). All results in this section rely on Assumptions~\ref{as:abc}-\ref{as:graph_p}.

We begin by formalizing the update rules for the averaged variables $\bar{x}_t$, $\bar{y}_t$, and $\bar{z}_t$ in Lemma~\ref{lem:avg}.

\begin{lemma}
    \label{lem:avg}
    Let Assumption~\ref{as:graph_p} hold, and define $\teta:=\eta_a\eta_sQ$. We have for all $t\geq 0$ that 
    \begin{equation}
        \label{eq:xt_avg}
        \bar{x}_{t + 1} = \bar{x}_t - \teta \bar{y}_t,
    \end{equation}
    and 
    \begin{equation}
        \label{eq:yt_tracking}
        \bar{y}_t = \bar{y}_t^{(l)} = \bar{z}_{t + 1},\; \bar{z}_{t + 1} = \beta\bar{z}_t + \frac{1-\beta}{Q}\sum_{\ell=0}^{Q-1}\bar{g}_t^\ell.
    \end{equation}
\end{lemma}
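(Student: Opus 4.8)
The plan is to average each update in~\eqref{eq:dsmtl} over the $n$ agents, exploiting the doubly stochastic identity $\1^{\T}W = \1^{\T}$ from Assumption~\ref{as:graph_p}, and to propagate two auxiliary invariants by induction: $\bar{c}_t = 0$ for all $t\geq -1$ and $\bar{x}_t = \bar{x}_t^{(l)}$ for all $t\geq 0$, where $\bar{x}_t^{(l)} := \frac1n\sumn x_{i,t}^{(l)}$.

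First I would establish~\eqref{eq:yt_tracking}. Telescoping the local updates~\eqref{eq:xtell_local} gives $\x_t - \x_t^Q = \eta_a\sum_{\ell=0}^{Q-1}\g_t^\ell + \eta_a Q\bc_t$, so by the definition of $\br_t$ (Line~\ref{line:rt} of Algorithm~\ref{alg:dsmtl}) the correction cancels exactly and $\br_t = \frac1Q\sum_{\ell=0}^{Q-1}\g_t^\ell$. Averaging~\eqref{eq:dsmtl_localz} then yields $\bar{z}_{t+1} = \beta\bar{z}_t + \frac{1-\beta}{Q}\sum_{\ell=0}^{Q-1}\bar{g}_t^\ell$, the second identity in~\eqref{eq:yt_tracking}. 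Next, averaging~\eqref{eq:bct} and using $\1^{\T}W=\1^{\T}$ gives $\bar{c}_{t+1} = \bar{c}_t + \eta_w(\bar{y}_t - \bar{y}_t^{(l)})$, while subtracting~\eqref{eq:ytl} from~\eqref{eq:yt} and averaging gives $\bar{y}_t - \bar{y}_t^{(l)} = \bar{c}_t - \bar{c}_{t-1}$; hence $\bar{c}_{t+1} = \bar{c}_t + \eta_w(\bar{c}_t - \bar{c}_{t-1})$. Since $c_{i,0}=c_{i,-1}=\mathbf 0$ by initialization, an immediate induction gives $\bar{c}_t = 0$ for all $t\geq -1$, and then averaging~\eqref{eq:yt} and~\eqref{eq:ytl} yields $\bar{y}_t = \bar{y}_t^{(l)} = \bar{z}_{t+1}$.

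For~\eqref{eq:xt_avg}, I would average~\eqref{eq:xtp1} and~\eqref{eq:xtlp1}. Using $\1^{\T}W=\1^{\T}$, \eqref{eq:xtp1} gives $\bar{x}_{t+1} = (1+\eta_w)\bar{x}_t - \eta_w\bar{x}_t^{(l)} - Q\eta_s\eta_a\bar{y}_t$, and~\eqref{eq:xtlp1} gives $\bar{x}_{t+1}^{(l)} = \bar{x}_t - Q\eta_s\eta_a\bar{y}_t$. Since $x_{i,0}=x_{i,0}^{(l)}$ implies $\bar{x}_0 = \bar{x}_0^{(l)}$, an induction closes: assuming $\bar{x}_t = \bar{x}_t^{(l)}$, the first relation collapses to $\bar{x}_{t+1} = \bar{x}_t - Q\eta_s\eta_a\bar{y}_t = \bar{x}_{t+1}^{(l)}$. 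Recalling $\teta = \eta_a\eta_s Q$ gives~\eqref{eq:xt_avg}.

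There is no genuine obstacle here — everything is exact bookkeeping — but the one point needing care is recognizing that the clean recursions require the two induction invariants $\bar{c}_t\equiv 0$ and $\bar{x}_t\equiv\bar{x}_t^{(l)}$, both of which rely on the stated initialization; skipping them would leave spurious $\bar{c}_t$ and $\bar{x}_t^{(l)}$ terms. It is also worth verifying explicitly that the definition of $\br_t$ makes $\bc_t$ cancel in the telescoped sum, since this is precisely what guarantees that $\bar{z}_{t+1}$ depends only on the averaged stochastic gradients and not on the corrections.
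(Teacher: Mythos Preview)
Your proposal is correct and follows essentially the same approach as the paper: average the updates using $\1^{\T}W=\1^{\T}$ and close two inductive invariants anchored at the initialization. The only cosmetic difference is that the paper carries out the induction directly on the relation $\bar{y}_t=\bar{y}_t^{(l)}$ via the compact recursion~\eqref{eq:typ1}, whereas you equivalently track $\bar{c}_t\equiv 0$ first and then read off $\bar{y}_t=\bar{y}_t^{(l)}=\bar{z}_{t+1}$; your treatment of~\eqref{eq:xt_avg} is also self-contained, while the paper defers it to a prior reference.
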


\begin{proof}
    See Appendix \ref{app:avg}.
\end{proof}

Lemma~\ref{lem:avg} shows that the averaged iterate $\bar{x}_t$ follows an update rule resembling SGD with momentum (SGDM). This observation enables us to leverage established techniques from the SGDM literature, e.g., \cite{liu2020improved,qiu2024convergence}. Specifically, we introduce an auxiliary sequence $\crki{\bar{d}_t}$ to facilitate the analysis, defined as:
\begin{equation}
    \label{eq:dt}
    \begin{aligned}
        \bar{d}_{t} := \begin{cases}
            \bar{x}_t, & t = 0\\
            \frac{1}{1-\beta}\bar{x}_t - \frac{\beta}{1-\beta}\bar{x}_{t -1}, & t\geq 1.
        \end{cases}
    \end{aligned}
\end{equation}

\begin{lemma}
    \label{lem:dt}
    Let Assumption \ref{as:graph_p} hold. We have for all $t\geq 0$ that
    \begin{align}
        \bar{d}_{t + 1} = \bar{d}_t - \frac{\teta}{Q}\sum_{\ell=0}^{Q-1}\bar{g}_t^\ell,\label{eq:dt_update}
    \end{align}
    and 
    \begin{align}
        \label{eq:dt_xt}
       \bar{d}_{t} - \bar{x}_t = -\frac{\teta\beta}{1-\beta}\bar{z}_{t}.
    \end{align}
\end{lemma}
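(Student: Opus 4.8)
The plan is to derive both identities directly from the definition \eqref{eq:dt} of $\bar d_t$ together with the averaged recursions established in Lemma~\ref{lem:avg}. The two claims are purely algebraic consequences of the fact that $\bar x_t$ obeys an SGDM-type recursion, so no new probabilistic or network-structural input is needed; everything follows from \eqref{eq:xt_avg} and \eqref{eq:yt_tracking}.

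For the second identity \eqref{eq:dt_xt}, I would treat $t=0$ separately (both sides are $0$ since $\bar d_0=\bar x_0$ and $\bar z_0=\mathbf 0$) and then, for $t\geq 1$, compute $\bar d_t-\bar x_t = \tfrac{1}{1-\beta}\bar x_t - \tfrac{\beta}{1-\beta}\bar x_{t-1} - \bar x_t = \tfrac{\beta}{1-\beta}(\bar x_t - \bar x_{t-1})$. Substituting $\bar x_t - \bar x_{t-1} = -\teta\,\bar y_{t-1}$ from \eqref{eq:xt_avg} and then $\bar y_{t-1} = \bar z_t$ from \eqref{eq:yt_tracking} yields $\bar d_t - \bar x_t = -\tfrac{\teta\beta}{1-\beta}\bar z_t$, as required. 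One should double-check that this also holds at $t=1$ under the case split, which it does since $\bar z_1 = (1-\beta)\bar r_0/1\cdot$-type expression is consistent with $\bar z_0=\mathbf 0$; in fact the $t=0$ formula $\bar d_0-\bar x_0=0=-\tfrac{\teta\beta}{1-\beta}\bar z_0$ is just the boundary case.

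For the first identity \eqref{eq:dt_update}, I would again split on $t$. For $t\geq 1$, write $\bar d_{t+1}-\bar d_t = \tfrac{1}{1-\beta}(\bar x_{t+1}-\bar x_t) - \tfrac{\beta}{1-\beta}(\bar x_t - \bar x_{t-1}) = -\tfrac{\teta}{1-\beta}\bar y_t + \tfrac{\teta\beta}{1-\beta}\bar y_{t-1}$ using \eqref{eq:xt_avg} twice. Now use $\bar y_t = \bar z_{t+1}$ and $\bar y_{t-1}=\bar z_t$ from \eqref{eq:yt_tracking}, so the right-hand side equals $-\tfrac{\teta}{1-\beta}(\bar z_{t+1} - \beta \bar z_t)$; by the momentum recursion $\bar z_{t+1}-\beta\bar z_t = \tfrac{1-\beta}{Q}\sum_{s=0}^{Q-1}\bar g_t^s$, and the factors $(1-\beta)$ cancel, giving $\bar d_{t+1}-\bar d_t = -\tfrac{\teta}{Q}\sum_{s=0}^{Q-1}\bar g_t^s$. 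For the base case $t=0$: $\bar d_1 - \bar d_0 = \tfrac{1}{1-\beta}\bar x_1 - \tfrac{\beta}{1-\beta}\bar x_0 - \bar x_0 = \tfrac{1}{1-\beta}(\bar x_1 - \bar x_0) = -\tfrac{\teta}{1-\beta}\bar y_0$, and with $\bar z_0=\mathbf 0$ we get $\bar y_0 = \bar z_1 = \tfrac{1-\beta}{Q}\sum_{s=0}^{Q-1}\bar g_0^s$, so again $\bar d_1-\bar d_0 = -\tfrac{\teta}{Q}\sum_{s=0}^{Q-1}\bar g_0^s$.

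There is essentially no hard step here; the only thing to be careful about is the bookkeeping at the boundary $t=0$ versus $t\geq 1$ induced by the piecewise definition of $\bar d_t$, and making sure the momentum recursion $\bar z_{t+1}=\beta\bar z_t + \tfrac{1-\beta}{Q}\sum_s\bar g_t^s$ from \eqref{eq:yt_tracking} is applied with the correct time index (note $\bar z_0 = \mathbf 0$ by initialization in Algorithm~\ref{alg:dsmtl}). I would present the $t\geq 1$ computation in full and then remark that the $t=0$ case follows by the same manipulation with $\bar z_0 = \mathbf 0$. The detailed verification is deferred to the appendix.
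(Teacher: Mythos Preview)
Your proposal is correct and follows essentially the same approach as the paper: both split into the cases $t=0$ and $t\geq 1$, rewrite $\bar d_{t+1}-\bar d_t$ and $\bar d_t-\bar x_t$ in terms of successive differences $\bar x_{t+1}-\bar x_t$, substitute \eqref{eq:xt_avg} and \eqref{eq:yt_tracking}, and then apply the momentum recursion $\bar z_{t+1}=\beta\bar z_t+\tfrac{1-\beta}{Q}\sum_s\bar g_t^s$ together with $\bar z_0=\mathbf 0$. The only cosmetic difference is that the paper writes the computation directly in terms of $\bar z$ (implicitly using $\bar y_t=\bar z_{t+1}$), whereas you route through $\bar y_t$ first.
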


\begin{proof}
    See Appendix \ref{app:dt}.
\end{proof}

Lemma~\ref{lem:dt} shows that the auxiliary variable $\bar{d}_t$ evolves similarly to an SGD iterate. This motivates the study of the approximate descent property of $\E\brki{f(\bar{d}_t) - f^*}$, as stated in Lemma~\ref{lem:descent} below.

\begin{lemma}
    \label{lem:descent}
     Let Assumptions~\ref{as:abc},~\ref{as:smooth},~and~\ref{as:graph_p} hold. Denote $\bavg_t:= \sum_{\ell=0}^{Q-1}\sumn\nabla f_i(x_{i,t}^\ell)/(nQ)$. We have for all $t\geq 0$ that
     \begin{equation}
        \label{eq:descent}
        \begin{aligned}
            \E\brk{f(\bar{d}_{t + 1})} &\leq \E\brk{f(\bar{d}_t)} - \frac{\teta}{2}\E\brk{\norm{\nabla f(\bar{x}_t)}^2} - \frac{\teta}{4}\prt{1 - 2\teta L}\E\brk{\norm{\bavg_t}^2}\\
            &\quad + \frac{\teta^3\beta^2 L^2}{(1-\beta)^2}\E\brk{\norm{\bar{z}_t}^2}  + \frac{\teta L^2}{2nQ}\sum_{\ell=0}^{Q-1}\sumn\E\brk{\norm{x_{i,t}^\ell - \bar{x}_t}^2} + \frac{\teta^2 L\sigma^2}{2nQ}.
        \end{aligned}
     \end{equation}
\end{lemma}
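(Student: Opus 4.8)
The plan is to run the standard descent estimate for the $L$-smooth function $f=\frac1n\sum_i f_i$ along the auxiliary sequence $\{\bar d_t\}$, whose increment is clean by Lemma~\ref{lem:dt}. Since each $f_i$ is $L$-smooth (Assumption~\ref{as:smooth}), so is $f$, hence
\[
f(\bar d_{t+1}) \le f(\bar d_t) + \inpro{\nabla f(\bar d_t),\, \bar d_{t+1}-\bar d_t} + \frac{L}{2}\norm{\bar d_{t+1}-\bar d_t}^2,
\]
where $\bar d_{t+1}-\bar d_t = -\frac{\teta}{Q}\sum_{s=0}^{Q-1}\bar g_t^s$ by \eqref{eq:dt_update}. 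I would then take total expectation and handle the linear and the quadratic terms separately.

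\textbf{Linear term.} Introduce the refined filtration $\{\cF_t^s\}$ where $\cF_t^s$ collects all randomness generated up to the $s$-th local update of round $t$; then $x_{i,t}^s$ is $\cF_t^s$-measurable and $\condEi{g_i(x_{i,t}^s;\xi_{i,t}^s)}{\cF_t^s}=\nabla f_i(x_{i,t}^s)$ by Assumption~\ref{as:abc}. Since $\bar d_t$ is determined before round $t$ begins, taking expectation yields $\E\inpro{\nabla f(\bar d_t), \frac1Q\sum_s\bar g_t^s} = \E\inpro{\nabla f(\bar d_t), \bavg_t}$. Writing $\nabla f(\bar d_t) = \nabla f(\bar x_t) + \big(\nabla f(\bar d_t)-\nabla f(\bar x_t)\big)$, I would apply the identity $-\inpro{a,b}=\frac12\norm{a-b}^2-\frac12\norm{a}^2-\frac12\norm{b}^2$ to the first piece with $a=\nabla f(\bar x_t)$, $b=\bavg_t$, producing $-\frac{\teta}{2}\norm{\nabla f(\bar x_t)}^2$, $-\frac{\teta}{2}\norm{\bavg_t}^2$, and $\frac{\teta}{2}\norm{\nabla f(\bar x_t)-\bavg_t}^2$; the last is bounded by $\frac{\teta}{2nQ}\sum_{s,i}\norm{\nabla f_i(\bar x_t)-\nabla f_i(x_{i,t}^s)}^2 \le \frac{\teta L^2}{2nQ}\sum_{s,i}\norm{x_{i,t}^s-\bar x_t}^2$ using Jensen and $L$-smoothness of each $f_i$, which is exactly the drift term in \eqref{eq:descent}. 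For the second piece I would use Cauchy--Schwarz and Young's inequality, $-\teta\inpro{u,v}\le\teta\norm{u}^2+\frac{\teta}{4}\norm{v}^2$ with $v=\bavg_t$, $u=\nabla f(\bar d_t)-\nabla f(\bar x_t)$, and then $\norm{\nabla f(\bar d_t)-\nabla f(\bar x_t)}^2\le L^2\norm{\bar d_t-\bar x_t}^2 = \frac{\teta^2\beta^2 L^2}{(1-\beta)^2}\norm{\bar z_t}^2$ via \eqref{eq:dt_xt}, giving an extra $\frac{\teta}{4}\norm{\bavg_t}^2$ plus the momentum term $\frac{\teta^3\beta^2 L^2}{(1-\beta)^2}\norm{\bar z_t}^2$.

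\textbf{Quadratic term.} Here $\frac{L}{2}\E\norm{\bar d_{t+1}-\bar d_t}^2 = \frac{L\teta^2}{2}\E\norm{\frac1Q\sum_s\bar g_t^s}^2$, and I would split $\frac1Q\sum_s\bar g_t^s = \bavg_t + \frac1{nQ}\sum_{s,i}\big(g_i(x_{i,t}^s;\xi_{i,t}^s)-\nabla f_i(x_{i,t}^s)\big)$. The per-local-step noises $\big\{\frac1n\sum_i(g_i(x_{i,t}^s;\xi_{i,t}^s)-\nabla f_i(x_{i,t}^s))\big\}_{s=0}^{Q-1}$ form a martingale-difference sequence adapted to $\{\cF_t^s\}$ with conditional second moment at most $\sigma^2/n$ (independence across agents together with \eqref{eq:abc}), so their scaled sum has expected squared norm at most $\sigma^2/(nQ)$; combined with the vanishing of the cross term under conditioning (each step-$s$ noise is conditionally mean-zero given $\cF_t^s$), this yields $\frac{L\teta^2}{2}\E\norm{\frac1Q\sum_s\bar g_t^s}^2 \le \frac{L\teta^2}{2}\E\norm{\bavg_t}^2 + \frac{L\teta^2\sigma^2}{2nQ}$.

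\textbf{Assembly and main obstacle.} Adding the two bounds and collecting the $\norm{\bavg_t}^2$ contributions, $-\frac{\teta}{2}+\frac{\teta}{4}+\frac{L\teta^2}{2} = -\frac{\teta}{4}(1-2\teta L)$, reproduces \eqref{eq:descent} exactly. I expect the delicate part to be the within-round stochastic bookkeeping used in both terms: unlike DSMT (where $Q=1$), the local iterates $x_{i,t}^s$ depend on the earlier same-round noises $\xi_{i,t}^{0},\dots,\xi_{i,t}^{s-1}$, so the mean--noise decomposition is not orthogonal in the naive one-shot sense; one must work with the refined filtration $\{\cF_t^s\}$, exploit the martingale structure over the $Q$ local steps to recover the $1/Q$ factor on the variance, and carefully account for the residual interactions between $\bavg_t$ and the within-round noise. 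This is precisely the ingredient absent from the DSMT analysis and the technical content this lemma adds.
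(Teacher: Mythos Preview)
Your proposal is correct and matches the paper's proof essentially step for step: the same descent lemma along $\{\bar d_t\}$, the same split $\nabla f(\bar d_t)=\nabla f(\bar x_t)+\big(\nabla f(\bar d_t)-\nabla f(\bar x_t)\big)$ handled by the polarization identity on the first piece and Young's inequality $-\teta\langle u,v\rangle\le \teta\|u\|^2+\tfrac{\teta}{4}\|v\|^2$ on the second, and the same martingale-difference variance bound on the quadratic term via the refined filtration $\{\cF_t^\ell\}$. The within-round stochastic bookkeeping you flag as the main obstacle is exactly what the paper formalizes in \eqref{eq:tower} and \eqref{eq:avg_var}--\eqref{eq:condE_ij}.
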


\begin{proof}
    See Appendix \ref{app:descent}.
\end{proof}

The primary goal of this section is to construct and analyze a Lyapunov function, starting from the recursion \eqref{eq:descent} in Lemma~\ref{lem:descent}. Since recursion \eqref{eq:descent} contains several error terms on its right-hand side, we first derive recursions for these terms to establish an overall approximate descent property. Accordingly, we begin by developing a recursion for the averaged momentum term in Lemma~\ref{lem:barz} and deriving an upper bound for the local update errors in Lemma~\ref{lem:sum_ub}. 

\begin{lemma}
    \label{lem:barz}
    Let Assumptions~\ref{as:abc},~\ref{as:smooth},~and~\ref{as:graph_p} hold. We have for all $t\geq 0$ that
    \begin{equation}
        \label{eq:barz}
        \begin{aligned}
             \E\brk{\norm{\bar{z}_{t + 1}}^2} &\leq \beta \E\brk{\norm{\bar{z}_t}^2} + \frac{2(1-\beta)^2\sigma^2}{nQ}+ 3(1-\beta)\E\brk{\norm{\bavg_t}^2}.
        \end{aligned}
    \end{equation}
\end{lemma}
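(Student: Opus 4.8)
\textbf{Proof plan for Lemma~\ref{lem:barz}.}

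The plan is to start from the averaged momentum recursion in \eqref{eq:yt_tracking}, namely $\bar{z}_{t+1} = \beta\bar{z}_t + \frac{1-\beta}{Q}\sum_{s=0}^{Q-1}\bar{g}_t^s$, and split the aggregated stochastic gradient into its mean and a zero-mean noise part. Writing $\bar{g}_t^s = \frac{1}{n}\sumn g_i(x_{i,t}^s;\xi_{i,t}^s)$, I would set $\bavg_t = \frac{1}{nQ}\sum_{s=0}^{Q-1}\sumn \nabla f_i(x_{i,t}^s)$ (as defined in Lemma~\ref{lem:descent}) and $\xi\text{-noise} := \frac{1}{Q}\sum_{s=0}^{Q-1}\bar{g}_t^s - \bavg_t$, so that $\bar{z}_{t+1} = \beta\bar{z}_t + (1-\beta)\bavg_t + (1-\beta)(\text{noise})$. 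The noise term is conditionally mean-zero given the past, and by Assumption~\ref{as:abc} (unbiasedness, variance bound $\sigma^2$, and independence across agents at each $t$) its conditional second moment is bounded by $\frac{1}{n^2 Q^2}\sum_{s=0}^{Q-1}\sumn \sigma^2 = \frac{\sigma^2}{nQ}$; here I use that the local noises $\{\xi_{i,t}^s\}$ are independent across $i$ for fixed $(t,s)$, and across $s$ one only needs them to be conditionally unbiased (the cross terms in $s$ vanish in expectation after conditioning, or more simply one applies Jensen/convexity to the average over $s$, which gives the same $\sigma^2/(nQ)$ bound).

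Next I would take the squared norm and then the full expectation. Using $\E[\langle \beta\bar{z}_t + (1-\beta)\bavg_t,\ (1-\beta)(\text{noise})\rangle] = 0$ by the tower property, I get
\begin{align*}
\E\brk{\norm{\bar{z}_{t+1}}^2} = \E\brk{\norm{\beta\bar{z}_t + (1-\beta)\bavg_t}^2} + (1-\beta)^2 \E\brk{\norm{\text{noise}}^2} \leq \E\brk{\norm{\beta\bar{z}_t + (1-\beta)\bavg_t}^2} + \frac{(1-\beta)^2\sigma^2}{nQ}.
\end{align*}
For the first term, I apply the standard convexity inequality $\norm{\beta a + (1-\beta) b}^2 \leq \beta\norm{a}^2 + (1-\beta)\norm{b}^2$ with $a = \bar{z}_t$, $b = \bavg_t$, which is valid since $\beta\in(0,1)$. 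This yields $\E\brk{\norm{\bar{z}_{t+1}}^2} \leq \beta\E\brk{\norm{\bar{z}_t}^2} + (1-\beta)\E\brk{\norm{\bavg_t}^2} + \frac{(1-\beta)^2\sigma^2}{nQ}$, which is already slightly stronger than the claimed bound (the target has coefficient $3(1-\beta)$ on the $\bavg_t$ term and $2(1-\beta)^2$ on the noise term), so the stated inequality follows immediately with room to spare. The slack presumably exists because the lemma is stated in a form robust to the ABC-type generalization mentioned after Assumption~\ref{as:abc}, where the variance bound may carry an extra additive multiple of $\norm{\bavg_t}^2$ or of gradient norms.

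The main technical point to be careful about — rather than a genuine obstacle — is the handling of the double average (over agents $i$ and over local steps $s$) in the noise term: one must verify that conditioning is done at the right level so that the martingale-difference structure is preserved across the $s$-index (the iterate $x_{i,t}^{s}$ depends on earlier noises $\xi_{i,t}^{s'}$, $s'<s$), and that the independence-across-agents assumption is only invoked at fixed $(t,s)$. The cleanest route is: condition on everything up to the start of round $t$, apply Jensen to pull the average over $s$ outside the square, and within each $s$ use the across-agent independence to kill cross terms in $i$; this gives $\frac{1}{Q}\sum_s \frac{\sigma^2}{n} = \frac{\sigma^2}{nQ}$ for the conditional variance of the noise, which is exactly what is needed. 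Everything else is routine algebra, so I would relegate the full computation to the appendix.
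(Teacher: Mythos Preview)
There is a genuine gap. You claim $\E[\langle \beta\bar{z}_t + (1-\beta)\bavg_t,\ (1-\beta)(\text{noise})\rangle] = 0$ by the tower property, but this fails for the $\bavg_t$ part: $\bavg_t = \frac{1}{nQ}\sum_{s=0}^{Q-1}\sumn \nabla f_i(x_{i,t}^s)$ depends on the local iterates $x_{i,t}^s$ for \emph{all} $s\leq Q-1$, and those in turn depend on the earlier local noises $\xi_{i,t}^{s'}$ for $s'<s$. Hence $\bavg_t$ is \emph{not} $\cF_t^0$-measurable, and there is no $\sigma$-algebra on which $\bavg_t$ is measurable while the aggregated noise $\frac{1}{Q}\sum_s \bar{\Delta}_t^s$ is still conditionally mean-zero. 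Concretely, $\nabla f_i(x_{i,t}^1)$ is correlated with $\bar{\Delta}_t^0$ because $x_{i,t}^1$ is computed using $g_{i,t}^0$. So the ``stronger'' inequality you derived is not valid, and the apparent slack in the constants is not robustness padding --- it is actually needed.

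The paper handles this by exploiting orthogonality only for $\bar{z}_t$ (which \emph{is} $\cF_t^0$-measurable, see \eqref{eq:barz_unbiased}) and then controlling the non-vanishing cross term $2(1-\beta)^2\langle \bavg_t,\ \frac{1}{Q}\sum_s \bar{\Delta}_t^s\rangle$ via Young's inequality; this is precisely what produces the factor $2$ on the variance and the factor $(3-2\beta)\leq 3$ on $\norm{\bavg_t}^2$ in \eqref{eq:barz_s2}. As a minor secondary point, your ``cleanest route'' for the noise variance via Jensen over the $s$-average only yields $\sigma^2/n$, not $\sigma^2/(nQ)$ (your arithmetic $\frac{1}{Q}\sum_s \frac{\sigma^2}{n}$ equals $\sigma^2/n$); recovering the extra $1/Q$ genuinely requires the martingale-difference structure across $s$, i.e., $\condEi{\langle \bar{\Delta}_t^p,\bar{\Delta}_t^q\rangle}{\cF_t^0}=0$ for $p<q$, as in \eqref{eq:avg_var}--\eqref{eq:cond_unb}.
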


\begin{proof}
    See Appendix \ref{app:barz}.
\end{proof}

\begin{lemma}
    \label{lem:sum_ub}
    Let Assumptions~\ref{as:abc},~\ref{as:smooth},~and~\ref{as:graph_p} hold. Let $\eta_a\leq 1/(6\sqrt{2} QL)$. We have for all $t\geq 0$ that
    \begin{equation}
        \label{eq:sum_ub}
        \begin{aligned}
            &\frac{1}{nQ}\sum_{\ell=0}^{Q-1}\sumn \E\brk{\norm{x_{i,t}^\ell - \bar{x}_t^{\T}}^2}\leq \frac{9}{n}\E\brk{\norm{\Pi\x_t}^2} + 3\eta_a^2 Q \sigma^2 + \frac{3\eta_a^2Q^2}{n}\E\brk{\norm{\Pi\y_t}^2} \\
            &\quad + 3\eta_a^2Q^2\E\brk{\norm{\nabla f(\bar{x}_t)}^2} + \frac{3\eta_a^2Q^2\beta^2}{n}\E\brk{\norm{\z_t - \nabla
             F(\1\bar{x}_t^{\T})}^2}.
        \end{aligned}
    \end{equation}
\end{lemma}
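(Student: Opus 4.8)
The plan is to bound the local consensus error $\tfrac{1}{nQ}\sum_{\ell=0}^{Q-1}\sum_{i=1}^n\E[\|x_{i,t}^\ell - \bar{x}_t^{\T}\|^2]$ by first controlling the drift of each local iterate $x_{i,t}^\ell$ away from its starting point $x_{i,t}^0 = x_{i,t}$, then invoking the triangle inequality to split into a ``drift'' part and a ``consensus at the communication point'' part $\|x_{i,t} - \bar{x}_t\|^2$. The key observation from \eqref{eq:xtell_local} is that $x_{i,t}^\ell - x_{i,t} = -\eta_a\sum_{s=0}^{\ell-1}(g_{i,t}^s + c_{i,t})$, so the drift after $\ell$ steps involves a sum of $\ell\leq Q$ correction-plus-stochastic-gradient terms. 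First I would write $\|x_{i,t}^\ell - \bar{x}_t^{\T}\|^2 \leq 2\|x_{i,t}^\ell - x_{i,t}\|^2 + 2\|x_{i,t} - \bar{x}_t^{\T}\|^2$, which after summing produces the $\tfrac{c}{n}\E[\|\Pi\x_t\|^2]$ term (with some constant to be tracked), leaving the drift term $\tfrac{1}{nQ}\sum_{\ell,i}\E[\|x_{i,t}^\ell - x_{i,t}\|^2]$ to be estimated.

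For the drift, I would expand $x_{i,t}^\ell - x_{i,t} = -\eta_a\sum_{s=0}^{\ell-1}(g_{i,t}^s + c_{i,t})$ and use Cauchy--Schwarz (or Jensen, paying a factor $\ell\leq Q$) together with the unbiasedness and bounded-variance Assumption~\ref{as:abc} to separate the stochastic noise from the mean. This gives a bound of the form $\eta_a^2 Q \sum_{s}\E[\|\nabla f_i(x_{i,t}^s) + c_{i,t}\|^2] + \eta_a^2 Q^2\sigma^2$ per agent; summing over $i$ and $\ell$ yields the $3\eta_a^2 Q\sigma^2$ term. The remaining mean term $\E[\|\nabla f_i(x_{i,t}^s) + c_{i,t}\|^2]$ must then be related to quantities appearing on the right-hand side of \eqref{eq:sum_ub}. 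Here I would use the tracking identity \eqref{eq:yt} together with $\y_t = \z_{t+1} + \bc_t$ and Lemma~\ref{lem:avg} (which gives $\bar{y}_t = \bar{z}_{t+1}$), and decompose $\nabla f_i(x_{i,t}^s) + c_{i,t}$ around $\nabla f(\bar{x}_t)$, the consensus error $\Pi\y_t$, and the momentum gap $\z_t - \nabla F(\1\bar{x}_t^{\T})$: write $c_{i,t} = y_{i,t} - z_{i,t+1}$, substitute $z_{i,t+1} = \beta z_{i,t} + (1-\beta)r_{i,t}$, and use $L$-smoothness to replace $\nabla f_i(x_{i,t}^s)$ by $\nabla f_i(\bar{x}_t) \pm L\|x_{i,t}^s - \bar{x}_t\|$ — this introduces a self-referential appearance of the left-hand side, which is precisely why the hypothesis $\eta_a \leq 1/(6\sqrt{2}QL)$ is needed: the coefficient of the fed-back term is $\order{\eta_a^2 Q^2 L^2}\leq \tfrac{1}{72}$, so it can be absorbed into the left-hand side with a harmless constant inflation.

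The main obstacle will be carefully tracking the constants through this absorption argument so that the final coefficients come out exactly as $9, 3, 3, 3, 3$ as stated, and in particular correctly routing the $\beta^2$ factor onto the momentum-gap term $\E[\|\z_t - \nabla F(\1\bar{x}_t^{\T})\|^2]$ rather than elsewhere — this requires using the specific structure $z_{i,t+1} = \beta z_{i,t} + (1-\beta)r_{i,t}$ and the fact that the $(1-\beta)r_{i,t}$ part, being itself an averaged corrected gradient, recombines with the stochastic-gradient and consensus terms already accounted for, leaving only the $\beta z_{i,t}$ piece to carry the $\beta^2$ weight after one more smoothness step converting $z_{i,t}$ into $\z_t - \nabla F(\1\bar{x}_t^{\T})$ plus lower-order pieces. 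Once the bookkeeping is settled, the inequality follows by collecting terms and dividing through. The full computation is deferred to the appendix.
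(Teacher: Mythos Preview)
Your proposal is correct and follows essentially the same route as the paper's proof: the key ingredients you identify --- the identity $\bc_t = \y_t - \z_{t+1}$ from \eqref{eq:yt}, expanding $\z_{t+1} = \beta\z_t + (1-\beta)\br_t$ to isolate the $\beta^2\|\z_t - \nabla F(\1\bar{x}_t^{\T})\|^2$ contribution, using $L$-smoothness to produce a self-referential term, and absorbing it via $\eta_a^2 Q^2 L^2 \leq 1/72$ --- are exactly what the paper does in Appendix~\ref{app:sum_ub}.

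The only organizational difference is that you first apply a two-way triangle inequality $\|x_{i,t}^\ell - \bar{x}_t\|^2 \leq 2\|x_{i,t}^\ell - x_{i,t}\|^2 + 2\|x_{i,t} - \bar{x}_t\|^2$ and then decompose the drift, whereas the paper writes $\x_t^\ell - \1\bar{x}_t^{\T}$ directly as a single sum of eight pieces (via $I = \Pi + \1\1^{\T}/n$ applied to $\y_t - \z_{t+1} + \nabla F(\1\bar{x}_t^{\T})$, whose average is exactly $\nabla f(\bar{x}_t)$) and applies one $\tfrac{1}{8}$-weighted Young's inequality in \eqref{eq:sum_s2}. The paper's single-shot decomposition makes it easier to land on the stated constants after the absorption step $(\tfrac{1}{8} - \eta_a^2 Q^2 L^2)^{-1} \leq 9$; your two-stage split will work but may inflate constants slightly, which you already flagged as the main bookkeeping concern.
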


\begin{proof}
    We begin by analyzing the term $\x_t^\ell - \1\bar{x}_t^{\T}$. 
    From \eqref{eq:xtell_local}, we obtain
    \begin{equation}
        \label{eq:xtell2xt}
        \begin{aligned}
            \x_t^\ell = \x_t - \eta_a \sum_{s=0}^{\ell - 1} \g_t^s  - \eta_a\ell \bc_t.
        \end{aligned}
    \end{equation}
    Let $\boldsymbol{\Delta}_t^s:= \g_t^s - \nabla F(\x_t^s)$. Noting from \eqref{eq:yt} that $\bc_t = \y_t - \z_{t + 1}$. We derive from \eqref{eq:xtell2xt}:
    \begin{equation}
        \label{eq:sum_s1}
        \begin{aligned}
            &\x_t^\ell - \1\bar{x}_t^{\T} = \x_t - \1\bar{x}_t^{\T} -\eta_a\sum_{s=0}^{\ell-1}\brk{\bs{\Delta}_t^s + \nabla F(\x_t^s) - \nabla F(\1\bar{x}_t^{\T})}\\
            &\quad  - \eta_a\ell \brk{\y_t - \z_{t + 1} + \nabla F(\1\bar{x}_t^{\T})}\\
            &= \Pi\x_t - \eta_a \sum_{s=0}^{\ell-1}\brk{\bs{\Delta}_t^s + \nabla F(\x_t^s) - \nabla F(\1\bar{x}_t^{\T})} - \eta_a\ell\1\brk{\nabla f(\bar{x}_t)}^{\T}\\
            &\quad - \eta_a\ell\Pi\crk{\y_t - \beta\brk{\z_t - \nabla F(\1\bar{x}_t^{\T})} -\frac{1-\beta}{Q}\sum_{s=0}^{Q-1}\brk{\bs{\Delta}_t^s + \nabla F(\x_t^s) - \nabla F(\1\bar{x}_t^{\T})}},
        \end{aligned}
    \end{equation}
    where we used $I = \Pi + \1\1^{\T}/n$ and from \eqref{eq:yt_tracking}:
    \begin{align*}
        \frac{\1\1^{\T}}{n}\brk{\y_t - \z_{t + 1} + \nabla F(\1\bar{x}_t^{\T})} = \1\brk{\nabla f(\bar{x}_t)}^{\T}.
    \end{align*}
    From \eqref{eq:sum_s1}, we obtain
    \begin{equation}
        \label{eq:sum_s2}
        \begin{aligned}
            &\frac{1}{8}\condE{\norm{\x_t^\ell - \1\bar{x}_t^{\T}}^2}{\cF_t^0} \leq \norm{\Pi\x_t}^2 + \eta_a^2\condE{\norm{\sum_{s=0}^{\ell-1}\bs{\Delta}_t^s}^2}{\cF_t^0} + \eta_a^2\ell^2\norm{\Pi\y_t}^2\\
            &\quad + \eta_a^2\condE{\norm{\sum_{s=0}^{\ell-1}\brk{\nabla F(\x_t^s) - \nabla F(\1\bar{x}_t^{\T})}}^2}{\cF_t^0} + \eta_a^2\ell^2\beta^2\norm{\Pi\brk{\z_t - \nabla F(\1\bar{x}_t^{\T})}}^2 \\
            &\quad + \frac{\eta_a^2\ell^2(1-\beta)^2}{Q^2}\condE{\norm{\sum_{s=0}^{Q-1}\Pi \bs{\Delta}_t^s}^2}{\cF_t^0}+  n\eta_a^2\ell^2\norm{\nabla f(\bar{x}_t)}^2\\
            &\quad + \frac{\eta_a^2\ell^2}{Q}\sum_{s=0}^{Q-1}\condE{\norm{\Pi\brk{\nabla F(\x_t^s) - \nabla F(\1\bar{x}_t^{\T})}}^2}{\cF_t^0}\\
            &\leq \norm{\Pi\x_t}^2 + 2\eta_a^2\ell n \sigma^2 + 2\eta_a^2L^2\ell \sum_{s=0}^{Q-1}\condE{\norm{\x_t^s-\1\bar{x}_t^{\T}}^2}{\cF_t^0}+ \eta_a^2\ell^2\norm{\Pi\y_t}^2 \\
            &\quad + n\eta_a^2\ell^2\norm{\nabla f(\bar{x}_t)}^2 + \eta_a^2\ell^2\beta^2\norm{\z_t - \nabla F(\1\bar{x}_t^{\T})}^2,
        \end{aligned}
    \end{equation}
    where the term $\condEi{\normi{\sum_{s=0}^{\ell-1}\boldsymbol{\Delta}_t^s}^2}{\cF_t^0}$ is bounded similarly to the derivation of \eqref{eq:avg_var}.
    Averaging both sides of \eqref{eq:sum_s2} over $\ell = 0,1,\ldots, Q-1$ yields
    \begin{equation}
        \label{eq:sum_s3}
        \begin{aligned}
            &\prt{\frac{1}{8} - \eta_a^2 L^2 Q^2}\frac{1}{Q}\sum_{\ell=0}^{Q-1}\condE{\norm{\x_t^\ell - \1\bar{x}_t^{\T}}^2}{\cF_t^0}\leq \norm{\Pi\x_t}^2 + \eta_a^2 Q n\sigma^2 + \frac{\eta_a^2Q^2}{3}\norm{\Pi\y_t}^2 \\
            &\quad + \frac{\eta_a^2Q^2n}{3}\norm{\nabla f(\bar{x}_t)}^2 + \frac{\eta_a^2Q^2\beta^2}{3}\norm{\z_t - \nabla F(\1\bar{x}_t^{\T})}^2.
        \end{aligned}
    \end{equation}
    Letting $\eta_a \leq 1/(6\sqrt{2}Q L)$ and taking the full expectation on both sides of \eqref{eq:sum_s3} yields the desired result.
\end{proof}

Lemma~\ref{lem:sum_ub} leads us to derive recursions for the consensus errors $\E\brki{\normi{\Pi\x_t}^2}$ and $\E\brki{\normi{\Pi\y_t}^2}$. To this end, we construct auxiliary sequences $\cR_t^x$ and $\cR_t^y$ 
that upper bound these errors, following procedures similar to those in \cite{huang2025accelerated,song2021optimal}.
The recursions for the auxiliary sequences are stated in Lemmas \ref{lem:tPitx} and \ref{lem:tPi_cRy} below.

\begin{lemma}
    \label{lem:tPitx}
    Let Assumptions~\ref{as:abc},~\ref{as:smooth},~and~\ref{as:graph_p} hold. Consider the sequence $\{\cR_t^x\}$ given by
    \begin{equation}
        \label{eq:cRx}
        \begin{aligned}
           \cR_0^x & = c_0 \norm{\Pi\x_0}^2,  \text{ and } \cR_{t + 1}^x
            = \trw\cR_t^x + \frac{c_0\trw^2\teta^2}{1-\trw}\E\brk{\norm{\Pi\y_t}^2}, \; t\ge0,
        \end{aligned}
    \end{equation}
    We have
    \begin{equation}
        \label{eq:Pix2cRx}
        \E\brk{\norm{\Pi\x_t}^2}\leq \cR_t^x, \; t\geq 0.
    \end{equation}
  
\end{lemma}

\begin{proof}
    See Appendix~\ref{app:tPitx}.
    
\end{proof}

\begin{lemma}
    \label{lem:tPi_cRy}
    Let Assumptions~\ref{as:abc},~\ref{as:smooth},~and~\ref{as:graph_p} hold. 
    Consider the sequence $\{\cR_t^y\}$ given by
    \begin{equation}
        \label{eq:cRy}
        \begin{aligned}
           \cR_0^y & = c_0\E\brk{\norm{\Pi\y_0}^2}, \text{ and } \\
           \cR_{t + 1}^y &= \trw\cR_t^y + \frac{5c_0(1-\beta)^2 L^2}{Q(1-\trw)}\sum_{\ell=0}^{Q-1}\E\brk{\norm{\x_{t + 1}^\ell - \1\bar{x}_{t + 1}^{\T}}^2} \\
            &\quad + \frac{5c_0(1-\beta)^2}{1-\trw}\E\brk{\norm{\z_{t+1} - \nabla F(\1\bar{x}_{t + 1}^{\T})}^2} + \frac{5c_0(1-\beta)^2n\sigma^2}{Q}  , \; t\ge 0.
        \end{aligned}
    \end{equation}
    We have
    \begin{equation}
        \label{eq:Piy2cRy}
        \E\brk{\norm{\Pi\y_t}^2}\leq \cR_t^y, \; t\geq 0.
    \end{equation}
\end{lemma}

\begin{proof}
    See Appendix~\ref{app:tPi_cRy}.
\end{proof}

The recursion for $\cR_t^y$ in \eqref{eq:cRy} necessitates bounding the error term $\E\brki{\normi{\z_t - \nabla F(\1\bar{x}_t^{\T})}^2}$, which is provided in Lemma~\ref{lem:ztf}. 

\begin{lemma}
    \label{lem:ztf}
    Let Assumptions~\ref{as:abc},~\ref{as:smooth},~and~\ref{as:graph_p} hold. Let $\eta_a\leq 1/(6\sqrt{2}QL)$. We have for all $t\geq 0$ that
    \begin{equation}
        \label{eq:ztf}
        \begin{aligned}
            &\E\brk{\norm{\z_{t + 1} - \nabla F(\1\bar{x}_t^{\T})}^2}\leq \frac{1 + \beta}{2}\E\brk{\norm{\z_t - \nabla F(\1\bar{x}_t^{\T})}^2} + 45(1-\beta)L^2\E\brk{\norm{\Pi\x_t}^2} \\
            &\quad + \frac{15\teta^2 L^2n\beta^2}{1-\beta}\E\brk{\norm{\bar{z}_t}^2} + 15\eta_a^2Q^2L^2(1-\beta)\E\brk{\norm{\Pi\y_t}^2}\\
            &\quad + \frac{3(1-\beta)^2 n\sigma^2}{Q}\prt{1 + \frac{5\eta_a^2Q^2L^2}{1-\beta} + \frac{5\teta^2 L^2}{n(1-\beta)}} + 15\teta^2 L^2(1-\beta)n\E\brk{\norm{\bavg_t}^2}\\
            &\quad + 15\eta_a^2Q^2L^2(1-\beta)n\E\brk{\norm{\nabla f(\bar{x}_t)}^2}.
        \end{aligned}
    \end{equation}
\end{lemma}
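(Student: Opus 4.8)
The goal is to bound the error between the momentum variable $\z_{t+1}$ and the (centered) gradient $\nabla F(\1\bar{x}_t^{\T})$, controlling it in terms of a contraction of the same quantity at time $t$ plus a collection of ``manageable'' error terms. The natural starting point is the stacked recursion for $\z_{t+1}$. From the update \eqref{eq:dsmtl_localz} together with \eqref{eq:rt} we have $\z_{t+1} = \beta\z_t + \frac{1-\beta}{Q}\sum_{\ell=0}^{Q-1}\g_t^\ell$, so I would write
\begin{equation*}
\z_{t+1} - \nabla F(\1\bar{x}_t^{\T}) = \beta\bigl(\z_t - \nabla F(\1\bar{x}_t^{\T})\bigr) + (1-\beta)\Bigl(\tfrac{1}{Q}\textstyle\sum_{\ell}\g_t^\ell - \nabla F(\1\bar{x}_t^{\T})\Bigr).
\end{equation*}
First I would take conditional expectation to split off the stochastic-gradient noise: since the $\g_t^\ell$ are unbiased for $\nabla F(\x_t^\ell)$ and independent across agents (Assumption~\ref{as:abc}), the cross term vanishes in expectation and the noise contributes a term of size $(1-\beta)^2\sigma^2/(nQ)$-ish (the $1/Q$ from averaging $Q$ local stochastic gradients, though one must be careful that the $\x_t^\ell$ themselves depend on earlier noise — this is handled by conditioning sequentially or by using the tower property and the $L$-smoothness to push the iterate-dependence into the deterministic remainder). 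What remains deterministic is $\beta(\z_t - \nabla F(\1\bar{x}_t^{\T})) + (1-\beta)(\frac{1}{Q}\sum_\ell \nabla F(\x_t^\ell) - \nabla F(\1\bar{x}_t^{\T}))$.

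Next I would apply Young's inequality with a carefully chosen weight to convert the convex-combination-with-a-perturbation into a contraction with rate $\frac{1+\beta}{2}$ rather than $\beta$. The standard trick is: for $a = \beta u + (1-\beta) v$ with $\beta\in(0,1)$, $\norm{a}^2 \le \frac{1+\beta}{2}\norm{u}^2 + C(1-\beta)\norm{v}^2$ for an appropriate absolute constant $C$ (here one gets $C$ on the order of a small constant after choosing the split parameter $\sim (1-\beta)$). This yields the $\frac{1+\beta}{2}\E\norm{\z_t - \nabla F(\1\bar{x}_t^{\T})}^2$ term and reduces the problem to bounding $(1-\beta)\cdot\frac{1}{Q}\sum_\ell\E\norm{\nabla F(\x_t^\ell) - \nabla F(\1\bar{x}_t^{\T})}^2$. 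By $L$-smoothness this is at most $(1-\beta)L^2\cdot\frac{1}{Q}\sum_\ell\E\norm{\x_t^\ell - \1\bar{x}_t^{\T}}^2$.

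The remaining work is to expand $\norm{\x_t^\ell - \1\bar{x}_t^{\T}}^2$. Writing $\x_t^\ell - \1\bar{x}_t^{\T} = (\x_t^\ell - \1\bar{x}_t^{(\ell),\T}) + \1(\bar{x}_t^{(\ell)} - \bar{x}_t)^{\T}$ (or more directly $\x_t^\ell - \1\bar{x}_t^{\T} = \Pi\x_t^\ell + (\x_t^\ell - \Pi\x_t^\ell - \1\bar{x}_t^{\T})$), I would invoke Lemma~\ref{lem:sum_ub} — which is exactly the bound on $\frac{1}{nQ}\sum_\ell\sumn\E\norm{x_{i,t}^\ell - \bar{x}_t^{\T}}^2$ — to replace $\frac{1}{Q}\sum_\ell\E\norm{\x_t^\ell - \1\bar{x}_t^{\T}}^2$ by $9\E\norm{\Pi\x_t}^2 + 3\eta_a^2 Q n\sigma^2 + 3\eta_a^2Q^2\E\norm{\Pi\y_t}^2 + 3\eta_a^2Q^2 n\E\norm{\nabla f(\bar{x}_t)}^2 + 3\eta_a^2Q^2\beta^2\E\norm{\z_t - \nabla F(\1\bar{x}_t^{\T})}^2$. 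Multiplying through by $(1-\beta)L^2$ produces the stated $45(1-\beta)L^2\E\norm{\Pi\x_t}^2$, the $\eta_a^2Q^2L^2(1-\beta)$ terms on $\E\norm{\Pi\y_t}^2$ and $\E\norm{\nabla f(\bar{x}_t)}^2$, and the $\sigma^2$ contributions. The one subtlety here is that Lemma~\ref{lem:sum_ub} itself carries a $\z_t - \nabla F(\1\bar{x}_t^{\T})$ term with coefficient $3\eta_a^2Q^2\beta^2$; after multiplying by $(1-\beta)L^2$ this must be absorbed into the contraction — this is exactly where the condition $\eta_a\le 1/(6\sqrt2 QL)$ is used, since it forces $(1-\beta)L^2\cdot 15\eta_a^2Q^2\beta^2 \le \frac{1-\beta}{2}\cdot\text{(small)}$ so that $\frac{1+\beta}{2} + (\text{this})\le$ something still strictly less than $1$, or more precisely it is folded back into the $\frac{1+\beta}{2}$ factor by tightening the Young split. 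Finally, the $\bar{z}_t$ and $\bavg_t$ terms (with the $\teta^2 L^2 n\beta^2/(1-\beta)$ and $\teta^2 L^2(1-\beta)n$ coefficients) enter because $\bar{x}_t$ and $\bar{x}_t^{(\ell)}$ differ, or because an additional $\bar{d}_t$-type substitution is used to relate $\nabla f(\bar{x}_t)$ quantities — these come directly from the corresponding terms already present in Lemma~\ref{lem:sum_ub}'s bound on $\sum_\ell\norm{x_{i,t}^\ell - \bar x_t}^2$.

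\textbf{Main obstacle.} The technically delicate point is bookkeeping the $\z_t - \nabla F(\1\bar{x}_t^{\T})$ self-term: it appears both as the quantity being bounded (needing contraction rate $<1$) and as a right-hand-side contribution coming through Lemma~\ref{lem:sum_ub}. Showing that the combined coefficient $\frac{1+\beta}{2} + 15(1-\beta)\eta_a^2Q^2\beta^2 L^2$ stays below $1$ — equivalently that the surplus beyond $\frac{1+\beta}{2}$ is dominated by the slack $\frac{1-\beta}{2}$ — requires precisely the stepsize restriction $\eta_a\le 1/(6\sqrt2 QL)$, which gives $\eta_a^2Q^2L^2\le 1/72$ and hence $15(1-\beta)\eta_a^2Q^2\beta^2L^2 < (1-\beta)/4 < (1-\beta)/2$. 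Getting the absolute constants ($45$, the various $15$'s, the $3(1-\beta)^2 n\sigma^2/Q$ with its parenthetical corrections) to line up is routine but must be done with care, tracking each Young's-inequality weight consistently.
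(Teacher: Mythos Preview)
Your decomposition, noise splitting, Young's-inequality contraction, and plan to invoke Lemma~\ref{lem:sum_ub} are all on target and match the paper's approach. The self-term absorption you describe is also exactly how the paper passes from the raw coefficient $\beta$ to $\frac{1+\beta}{2}$.

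There is, however, one genuine gap. Your explanation for the $\E\brk{\norm{\bar z_t}^2}$ and $\E\brk{\norm{\bavg_t}^2}$ terms is incorrect: Lemma~\ref{lem:sum_ub} contains neither of these quantities, and nothing in your decomposition can produce coefficients involving $\teta^2 = \eta_a^2\eta_s^2Q^2$ (your decomposition only involves the local stepsize $\eta_a$). The point you are missing is that the lemma statement contains a typo: the left-hand side should read $\E\brk{\norm{\z_{t+1} - \nabla F(\1\bar{x}_{t+1}^{\T})}^2}$, with index $t+1$ on $\bar x$. This is what the Lyapunov analysis in \eqref{eq:tcL_def} actually needs, and it is what the paper proves in Appendix~\ref{app:ztf}. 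The correct decomposition therefore carries an additional term
\[
\nabla F(\1\bar{x}_t^{\T}) - \nabla F(\1\bar{x}_{t+1}^{\T}),
\]
whose squared norm is bounded via smoothness by $nL^2\norm{\bar{x}_{t+1} - \bar{x}_t}^2$. The paper then expands $\bar{x}_{t+1} - \bar{x}_t = -\teta\bar{z}_{t+1} = -\teta\beta\bar z_t - \teta(1-\beta)\tfrac{1}{Q}\sum_s\bar g_t^s$ (equation~\eqref{eq:xt_diff_var}), and this is precisely the source of the $\frac{15\teta^2 L^2 n\beta^2}{1-\beta}\E\brk{\norm{\bar z_t}^2}$, $15\teta^2 L^2(1-\beta)n\E\brk{\norm{\bavg_t}^2}$, and $\frac{5\teta^2 L^2}{n(1-\beta)}$ contributions in the final bound. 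Without this index shift, your argument would produce a valid bound on $\E\brk{\norm{\z_{t+1} - \nabla F(\1\bar{x}_t^{\T})}^2}$, but it would not contain the stated $\teta^2$ terms and would not close the Lyapunov recursion.
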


\begin{proof}
    See Appendix \ref{app:ztf}.
\end{proof}

\begin{remark}
    \label{rem:fail}
    We justify our design choice in \eqref{eq:mot_dsmtl_localz} by showing why a naive alternative fails. Consider performing multiple local momentum updates as in \eqref{eq:local_fail} below:
    \begin{equation}
    \label{eq:local_fail}
    \begin{aligned}
        \x_t^{\ell + 1} &= \x_t^\ell - \eta_a\prt{\z_t^\ell + \bc_t},\; \z_t^0 = \z_{t-1}^Q\\
        \z_{t}^{\ell + 1} &= \beta\z_t^\ell + (1-\beta)\g_t^\ell,\;\ell=0,1,\ldots, Q-1.
    \end{aligned}
\end{equation}
    It follows that $\y_t$ track the averaged momentum variables $\sum_{\ell = 0}^{Q-1} \z_t^{\ell} / Q$, leading to the following decomposition:
    \begin{equation}
    \label{eq:fail}
    \begin{aligned}
        \frac{1}{Q}\sum_{\ell = 0}^{Q-1} \z_t^{\ell} &= \frac{1}{Q}\sum_{\ell = 0}^{Q-1} \beta^{\ell}\z_t^0 + \frac{1-\beta}{Q}\sum_{\ell = 0}^{Q-1} \sum_{s=0}^{\ell - 1}\beta^{\ell - 1 - s} \g_t^{s + 1}\\
        &= \frac{1-\beta^Q}{Q(1-\beta)}\z_t^0 + \sum_{\ell=1}^{Q-1}\frac{1-\beta^{Q-\ell}}{Q} \g_t^\ell.
    \end{aligned}
    \end{equation}
    Consequently, we would not obtain the desired $(1-\beta)^2\sigma^2$ term on the right-hand side of \eqref{eq:ztf}.
\end{remark}

Combining Lemmas~\ref{lem:descent}-\ref{lem:ztf}, we introduce the Lyapunov function $\tcL_t$.
\begin{equation}
    \label{eq:tcL_def}
    \begin{aligned}
        \tcL_t&:= \E\brk{f(\bar{d}_t)} - f^* + \frac{4\teta^3 L^2}{(1-\beta)^3} \E\brk{\norm{\bar{z}_t}^2} + \frac{11\teta L^2}{n(1-\trw)}\cR_t^x \\
        &\quad + \frac{21\teta\eta_a^2Q^2 L^2}{n(1-\trw)}\cR_t^{y} + \frac{6(1+63c_0)\teta\eta_a^2Q^2 L^2 }{n(1-\beta)}\E\brk{\norm{\z_t - \nabla F(\1\bar{x}_t^{\T})}^2}.
    \end{aligned}
\end{equation}
Note that we work with $\E\brki{\normi{\z_t - \nabla F(\1\bar{x}_t^{\T})}^2}$ instead of $\E\brki{\normi{\z_t}^2}$ to avoid introducing additional terms related to data heterogeneity. 

Lemma~\ref{lem:tcL} establishes the approximate descent property for the Lyapunov function $\tcL_t$. 

\begin{lemma}
    \label{lem:tcL}
    Let Assumptions~\ref{as:abc},~\ref{as:smooth},~and~\ref{as:graph_p} hold. Set 
    \begin{align*}
        \eta_a\leq \frac{1}{15\sqrt{2(1+63c_0)}QL},\;\eta_s\leq \frac{1-\beta}{\sqrt{6c_0}},\; \teta = \eta_a\eta_s Q,\; \beta\geq \trw.
    \end{align*}
    
    Then, we have for all $t\geq 0$ that
    \begin{equation}
        \label{eq:tcL}
        \begin{aligned}
            \tcL_{t + 1}&\leq \tcL_t - \frac{\teta}{3}\E\brk{\norm{\nabla f(\bar{x}_t)}^2} - \frac{\teta}{5}\E\brk{\norm{\bavg_t}^2} + \frac{\teta^2L\sigma^2}{2nQ} \\
            &\quad  +  \frac{33\teta^3L^2\sigma^2}{nQ(1-\beta)} + 43(1+63c_0)\teta\eta_a^2QL^2\sigma^2.
        \end{aligned}
    \end{equation}

\end{lemma}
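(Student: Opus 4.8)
The plan is to start from the descent inequality \eqref{eq:descent} in Lemma~\ref{lem:descent} and add to it suitable positive multiples of the auxiliary recursions from Lemmas~\ref{lem:barz}, \ref{lem:sum_ub}, \ref{lem:tPitx}, \ref{lem:tPi_cRy}, and \ref{lem:ztf}, chosen so that every error term appearing on a right-hand side is dominated by the same term multiplied by a factor strictly less than $1$ on the left, after absorbing constants into the step sizes. Concretely, I would first substitute the bound \eqref{eq:sum_ub} for the local-update error $\frac{1}{nQ}\sum_{\ell}\sumn\E\normi{x_{i,t}^\ell - \bar x_t^\T}^2$ directly into \eqref{eq:descent}; this turns the ``$\frac{\teta L^2}{2nQ}\sum\sumn\E\normi{\cdot}^2$'' term into a combination of $\frac1n\E\normi{\Pi\x_t}^2$, $\sigma^2$, $\frac1n\E\normi{\Pi\y_t}^2$, $\E\normi{\nabla f(\bar x_t)}^2$, and $\frac1n\E\normi{\z_t - \nabla F(\1\bar x_t^\T)}^2$. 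The coefficient in front of $\E\normi{\nabla f(\bar x_t)}^2$ produced this way is $O(\teta\eta_a^2Q^2L^2) = O(\teta (\eta_a QL)^2)$, which under the stated bound $\eta_a \le 1/(15\sqrt{2(1+63c_0)}\,QL)$ is a tiny fraction of the $\frac{\teta}{2}$ available from the main negative term, leaving at least $\frac{\teta}{3}$; similarly the coefficient in front of $\E\normi{\bavg_t}^2$ stays below $\frac{\teta}{5}$ after accounting for the $-\frac{\teta}{4}(1-2\teta L)$ term (using $\teta L$ small).

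Next I would handle the ``memory'' terms $\E\normi{\bar z_t}^2$, $\cR_t^x$, $\cR_t^y$, and $\E\normi{\z_t - \nabla F(\1\bar x_t^\T)}^2$ one at a time, matching the coefficients prescribed in the definition \eqref{eq:tcL_def} of $\tcL_t$. For $\E\normi{\bar z_t}^2$: Lemma~\ref{lem:barz} gives a contraction with factor $\beta$ plus a $\frac{\sigma^2}{nQ}$ term and a $(1-\beta)\E\normi{\bavg_t}^2$ term; with weight $\frac{4\teta^3L^2}{(1-\beta)^3}$ the ``leftover'' $(1-\beta)\,\cdot$ piece of the $\bar z$ recursion produces a $\frac{4\teta^3L^2}{(1-\beta)^2}\cdot 3\E\normi{\bavg_t}^2 = O(\teta(\teta L)^2/(1-\beta)^2)\E\normi{\bavg_t}^2$ contribution, which is again negligible against $\frac{\teta}{5}$ once $\teta L/(1-\beta)$ is small (which follows from $\eta_s \le (1-\beta)/\sqrt{6c_0}$ and $\eta_a QL$ small, since $\teta L = \eta_a\eta_s Q L \lesssim (1-\beta)\eta_a QL$). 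For the consensus sequences: the key is that Lemma~\ref{lem:tPitx} and Lemma~\ref{lem:tPi_cRy} both contract with factor $\trw < 1$, and their cross-terms feed into each other with factors proportional to $\teta^2/(1-\trw)$ and $\eta_a^2Q^2L^2/(1-\trw)$ respectively; the assumption $\beta \ge \trw$ together with $\eta_s \le (1-\beta)/\sqrt{6c_0}$ is precisely what makes $\teta = \eta_a\eta_s Q$ small enough relative to $1-\trw$ so that, at the chosen weights $\frac{11\teta L^2}{n(1-\trw)}$ and $\frac{21\teta\eta_a^2Q^2L^2}{n(1-\trw)}$, the combined $(\cR^x,\cR^y)$ block still contracts and the residual $\sigma^2$, $\E\normi{\bavg_t}^2$, $\E\normi{\nabla f(\bar x_t)}^2$ contributions are absorbed. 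Finally, $\E\normi{\z_{t+1}-\nabla F(\1\bar x_t^\T)}^2$ is controlled by Lemma~\ref{lem:ztf}, which contracts with factor $\frac{1+\beta}{2}<1$; note a small bookkeeping point — Lemma~\ref{lem:ztf} bounds $\E\normi{\z_{t+1}-\nabla F(\1\bar x_t^\T)}^2$ in terms of quantities at time $t$, so to turn it into a bound on $\E\normi{\z_{t+1}-\nabla F(\1\bar x_{t+1}^\T)}^2$ (the quantity actually in $\tcL_{t+1}$) I would add and subtract $\nabla F(\1\bar x_t^\T)$, use $\normi{a+b}^2\le 2\normi{a}^2+2\normi{b}^2$, bound $\normi{\nabla F(\1\bar x_{t+1}^\T)-\nabla F(\1\bar x_t^\T)}^2 \le nL^2\normi{\bar x_{t+1}-\bar x_t}^2 = nL^2\teta^2\normi{\bar y_t}^2$ via Lemma~\ref{lem:avg}, and fold the resulting $\bar y_t$-term into the $\bar z$ block using $\bar y_t = \bar z_{t+1}$ and one more application of Lemma~\ref{lem:barz}. (The same add-and-subtract step is also needed where $\cR^x_{t+1}$ and $\cR^y_{t+1}$ in \eqref{eq:cRy} reference time-$(t+1)$ quantities.)

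After summing all these weighted inequalities and collecting terms, each error term on the right telescopes against its counterpart in $\E[\tcL_{t+1}]$ with a strictly smaller coefficient, leaving exactly $-\frac{\teta}{3}\E\normi{\nabla f(\bar x_t)}^2 - \frac{\teta}{5}\E\normi{\bavg_t}^2$ on the negative side and, on the noise side, three $\sigma^2$ contributions: $\frac{\teta^2 L\sigma^2}{2nQ}$ directly from \eqref{eq:descent}, $\frac{33\teta^3L^2\sigma^2}{nQ(1-\beta)}$ coming jointly from the $\bar z$-block ($\frac{4\teta^3L^2}{(1-\beta)^3}\cdot\frac{2(1-\beta)^2\sigma^2}{nQ}$) and the $\z-\nabla F$ block and the $\cR$ blocks, and $43(1+63c_0)\teta\eta_a^2QL^2\sigma^2$ from the $\eta_a^2Q$-scaled noise in Lemmas~\ref{lem:sum_ub}, \ref{lem:tPi_cRy}, and \ref{lem:ztf}. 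The main obstacle — and the part deserving genuine care rather than ``routine calculation'' — is verifying that the five weights in \eqref{eq:tcL_def} are simultaneously consistent: each weight must be large enough that its term's contraction deficit covers all the cross-terms that other blocks dump into it, yet small enough that the cross-terms it generates are in turn covered elsewhere and that the $\E\normi{\nabla f(\bar x_t)}^2$ and $\E\normi{\bavg_t}^2$ budgets ($\frac16$ and roughly $\frac{1}{20}$ of slack, respectively) are not exhausted. Getting the numeric constants ($11$, $21$, $6(1+63c_0)$, $43$, the $\frac13$ and $\frac15$) to close requires tracking the chain of inequalities carefully, which is where I expect essentially all the work of the proof to lie; the step-size conditions $\eta_a \le 1/(15\sqrt{2(1+63c_0)}QL)$, $\eta_s \le (1-\beta)/\sqrt{6c_0}$, and $\beta \ge \trw$ are exactly the three independent smallness requirements this bookkeeping produces.
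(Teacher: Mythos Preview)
Your proposal is essentially correct and follows the paper's approach: substitute \eqref{eq:sum_ub} into the descent inequality \eqref{eq:descent}, then add weighted multiples of the recursions for $\E\normi{\bar z_t}^2$, $\cR_t^x$, $\cR_t^y$, and $\E\normi{\z_t - \nabla F(\1\bar x_t^\T)}^2$, and check that the five weights in \eqref{eq:tcL_def} make all cross-terms telescope. The paper organizes the bookkeeping slightly differently --- it first reduces the $\cR_{t+1}^y$ recursion entirely to time-$t$ quantities (yielding \eqref{eq:cRy_sim}) before assembling the Lyapunov inequality, and it treats the weights as unknowns $\cC_1,\ldots,\cC_4$ to be solved for rather than verifying the stated values directly --- but these are purely stylistic.

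One genuine pitfall in your plan, though: your ``bookkeeping point'' about Lemma~\ref{lem:ztf} is triggered by a typo in that lemma's statement. Inspecting its proof (\eqref{eq:ztf_s1}--\eqref{eq:ztf_s2}) shows that it actually bounds $\E\normi{\z_{t+1} - \nabla F(\1\bar x_{t+1}^\T)}^2$, with the shift $\nabla F(\1\bar x_t^\T) - \nabla F(\1\bar x_{t+1}^\T)$ already absorbed via Young's inequality and \eqref{eq:xt_diff_var}. So no extra add-and-subtract is required for that block. More importantly, your proposed fix via $\normi{a+b}^2 \le 2\normi{a}^2 + 2\normi{b}^2$ would be fatal: it would double the contraction factor $\tfrac{1+\beta}{2}$ to $1+\beta > 1$, and the $\z$-block would no longer contract. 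For the genuine time-$(t{+}1)$ references on the right of \eqref{eq:cRy}, the paper does exactly what you anticipate --- it substitutes \eqref{eq:cRx}, \eqref{eq:sum_ub}, and the (corrected) Lemma~\ref{lem:ztf} at time $t{+}1$ to unfold everything back to time~$t$ --- but again using Young's inequality with carefully chosen parameters rather than the crude factor-of-two split, so that the $\trw$ contraction survives.
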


\begin{proof}
    See Appendix~\ref{app:tcL}.
\end{proof}

\section{Main Results}

In this section, we establish the main convergence properties of \DSMTL~for minimizing smooth objective functions, both with and without the PL condition. We further show that these results imply optimal communication complexity (with sufficiently many local updates) and optimal iteration complexity.

\subsection{General Nonconvex Case}
We first state the convergence result of \DSMTL~for minimizing smooth nonconvex objective functions in Theorem~\ref{thm:ncvx} below.
\begin{theorem}
    \label{thm:ncvx}
    Let Assumptions~\ref{as:abc},~\ref{as:smooth},~and~\ref{as:graph_p} hold. Set 
    \begin{align*}
        \eta_a\leq \frac{1}{15\sqrt{2(1+63c_0)}QL},\;\eta_s\leq \frac{1-\beta}{\sqrt{6c_0}},\; \teta = \eta_a\eta_s Q,\; \beta\geq \trw.
    \end{align*}
    Denote $\Delta_f=f(\bar{x}_0) - f^*$. Then, for $T\ge 1$, we have
    \begin{equation}
        \label{eq:ncvx}
        \begin{aligned}
            &\frac{1}{T}\sum_{t=0}^{T-1}\E\brk{\norm{\nabla f(\bar{x}_t)}^2}
            \leq \frac{4\Delta_f}{\teta T} + \frac{40c_0L^2\norm{\Pi\x_0}^2}{n(1-\trw)T} + \frac{4\norm{\nabla F(\1\bar{x}_0^{\T})}^2}{n(1-\beta)T}  \\
            &\quad + \frac{672\eta_a^2QL^2(1-\beta)\sigma^2}{T} +  \frac{3\teta L\sigma^2}{2nQ} + \frac{99\teta^2 L^2 \sigma^2}{nQ(1-\beta)} + 129(1+63c_0)\eta_a^2 Q L^2 \sigma^2.
        \end{aligned}
    \end{equation}

    In particular, by setting
    \begin{equation}
        \label{eq:teta_cO}
        \begin{aligned}
            \teta &= \frac{1}{\sqrt{\frac{3L\sigma^2 T}{8nQ\Delta_f}} + \frac{30\sqrt{3c_0(1+63c_0) }L}{1-\beta}},\;
            \eta_a = \frac{1}{\sqrt{\frac{3QL\sigma^2 T}{8\Delta_f}} + 15\sqrt{2(1+63c_0)}QL},\; \beta =\trw,
        \end{aligned}
    \end{equation}
    we obtain the following convergence rate, where $\orderi{\cdot}$ hides numerical constants:
    \begin{equation}
        \label{eq:ncvx_teta}
        \begin{aligned}
            &\frac{1}{T}\sum_{t=0}^{T-1}\E\brk{\norm{\nabla f(\bar{x}_t)}^2}
            \\
            &=\order{ \sqrt{\frac{L\Delta_f\sigma^2}{nQT}} + \frac{ L^2 \norm{\Pi\x_0}^2 }{n\sqrt{1-\lambda} T} + \frac{\norm{\nabla F(\1\bar{x}_0^{\T})}^2}{n\sqrt{1-\lambda} T}  + \frac{L\Delta_f}{T} + \frac{L\Delta_f}{\sqrt{1-\lambda}T}}.
        \end{aligned}
    \end{equation}

\end{theorem}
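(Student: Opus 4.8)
The plan is to telescope the approximate descent inequality of Lemma~\ref{lem:tcL} over $t = 0, 1, \ldots, T-1$. Summing \eqref{eq:tcL} and using $\E[\tcL_T] \geq 0$ (which holds because $f(\bar d_T) - f^* \geq 0$ by Assumption~\ref{as:smooth} and all the remaining terms in the definition \eqref{eq:tcL_def} are nonnegative), I obtain
\begin{equation*}
    \frac{\teta}{3}\sum_{t=0}^{T-1}\E\brk{\norm{\nabla f(\bar x_t)}^2} + \frac{\teta}{5}\sum_{t=0}^{T-1}\E\brk{\norm{\bavg_t}^2} \leq \E\brk{\tcL_0} + T\prt{\frac{\teta^2 L\sigma^2}{2nQ} + \frac{33\teta^3 L^2\sigma^2}{nQ(1-\beta)} + 43(1+63c_0)\teta\eta_a^2 Q L^2\sigma^2}.
\end{equation*}
Dropping the nonnegative $\bavg_t$ sum and dividing by $\teta T/3$ gives a bound on $\frac{1}{T}\sum_t \E[\norm{\nabla f(\bar x_t)}^2]$ in terms of $\E[\tcL_0]/(\teta T)$ and the three noise terms (rescaled by a factor $3$, which is absorbed into the displayed constants $3/2$, $99$, $129$).

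Next I would evaluate $\E[\tcL_0]$ explicitly from \eqref{eq:tcL_def}. At $t=0$ we have $\bar d_0 = \bar x_0$, so the first term is $\Delta_f$; the momentum term vanishes since $\z_0 = \mathbf 0$ gives $\bar z_0 = 0$; the terms $\cR_0^x$ and $\cR_0^y$ are bounded via \eqref{eq:cRx_0} and \eqref{eq:cRy_0} by $c_0\trw\norm{\Pi\x_0}^2$ and $c_0\trw\norm{\Pi\y_0}^2$ respectively, and using $\y_0 = \z_1 + \bc_0$ with $\z_0 = \bc_0 = \bc_{-1} = \mathbf 0$ one has $\Pi\y_0 = (1-\beta)\Pi\br_0$; finally the last term is $\E[\norm{\z_0 - \nabla F(\1\bar x_0^\T)}^2] = \norm{\nabla F(\1\bar x_0^\T)}^2$. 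Collecting these, using $\trw \leq 1$ and $\eta_a Q \leq \teta/((1-\beta)\eta_s Q) \cdot (1-\beta)$-type crude bounds on the various $\teta\eta_a^2 Q^2 L^2/(1-\trw)$ and $\teta\eta_a^2 Q^2 L^2/(1-\beta)$ prefactors (all of which are $\order{1}$ under the stated step-size restrictions), yields $\E[\tcL_0] \lesssim \Delta_f + \frac{L^2 \norm{\Pi\x_0}^2}{n(1-\trw)} + \frac{\norm{\nabla F(\1\bar x_0^\T)}^2}{n(1-\beta)}$ plus a $\frac{(1-\beta)^2\sigma^2}{\cdots}$-type contribution from $\Pi\br_0$; tracking constants carefully produces exactly the coefficients $4$, $40c_0$, $4$, and the $(1-\beta)\sigma^2/(QT)$ term in \eqref{eq:ncvx}. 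Dividing by $\teta T$ and inserting $1-\beta = 1 - \trw \sim \sqrt{1-\lambda^2} \geq \sqrt{1-\lambda}$ (from Lemma~\ref{lem:lca} and $\beta = \trw$) gives \eqref{eq:ncvx}.

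For the second part I substitute the prescribed $\teta$ from \eqref{eq:teta_cO} into \eqref{eq:ncvx}. The key observation is that $\teta = \bigl(\sqrt{a} + b\bigr)^{-1}$ with $a = \frac{3L\sigma^2 T}{8nQ\Delta_f}$ and $b = \frac{30\sqrt{3c_0(1+63c_0)}L}{1-\beta}$, so $1/(\teta T) = \sqrt{a}/T + b/T$, and the leading term $\frac{4\Delta_f}{\teta T} = 4\Delta_f(\sqrt a + b)/T$ splits into $\order{\sqrt{L\Delta_f\sigma^2/(nQT)}}$ and $\order{L\Delta_f/((1-\beta)T)} = \order{L\Delta_f/(\sqrt{1-\lambda}T)}$. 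For the genuine noise terms, one checks $\teta \leq \sqrt{8nQ\Delta_f/(3L\sigma^2 T)}$, so $\frac{\teta L\sigma^2}{nQ} \leq \sqrt{\frac{8L\Delta_f\sigma^2}{3nQT}} = \order{\sqrt{L\Delta_f\sigma^2/(nQT)}}$; similarly $\teta^2 L^2\sigma^2/(nQ(1-\beta)) \lesssim \Delta_f L/((1-\beta)T)$ using $\teta \leq 1/b \lesssim (1-\beta)/L$ to kill one power of $\teta$ and the variance bound on the other; and $\eta_a^2 Q L^2\sigma^2 \lesssim$ the same order since $\eta_a Q \leq \teta/\eta_s \lesssim \teta/(1-\beta)$ and $\teta \lesssim (1-\beta)/L$ plus the variance bound. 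The initial-condition terms carry the explicit $\frac{1}{\teta T}$, and since $\frac{1}{\teta T} = \order{\sqrt{L\sigma^2/(nQ\Delta_f T)} \cdot \frac{1}{\sqrt T}\cdot\sqrt T \cdot \frac1{\sqrt T}} + \order{1/((1-\beta)T)}$, only the $1/((1-\beta)T)$ piece survives in the stated rate (the $\sqrt a/T$ piece against an $\order{1}$ initial term is lower order in $1/T$), giving the $\frac{L^2\norm{\Pi\x_0}^2}{n\sqrt{1-\lambda}T}$ and $\frac{\norm{\nabla F(\1\bar x_0^\T)}^2}{n\sqrt{1-\lambda}T}$ terms. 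Assembling all contributions and using $1-\beta \sim \sqrt{1-\lambda^2} \geq \sqrt{1-\lambda}$ yields \eqref{eq:ncvx_teta}.

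\textbf{Main obstacle.} The routine but delicate part is the bookkeeping in the second paragraph: one must verify that every auxiliary coefficient in $\tcL_0$ and in the noise terms — each a product of step sizes with $1/(1-\trw)$ or $1/(1-\beta)$ factors — is genuinely $\order{1}$ (or collapses to the claimed power of $(1-\lambda)$) under the constraints $\eta_a \leq 1/(15\sqrt{2(1+63c_0)}QL)$, $\eta_s \leq (1-\beta)/\sqrt{6c_0}$, $\beta = \trw$, and $1-\trw \sim \sqrt{1-\lambda^2}$. Getting the specific numerical constants $4$, $40c_0$, $99$, $129$ right is tedious, and ensuring no hidden dependence on $Q$ sneaks into the initial-condition terms (it should not, since $\Pi\x_0$ and $\nabla F(\1\bar x_0^\T)$ are $Q$-independent and the $\eta_a^2 Q^2$ prefactors are balanced by the $QL$ step-size cap) requires care. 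The conceptual content, however, is just telescoping plus substitution.
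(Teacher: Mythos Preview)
Your approach is exactly the paper's: telescope Lemma~\ref{lem:tcL}, bound $\E[\tcL_0]$, then substitute the step-size choice \eqref{eq:teta_cO}. Two places where your reasoning slips, though.

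First, bounding $\E[\norm{\Pi\y_0}^2]$ is more involved than ``a $(1-\beta)^2\sigma^2$-type contribution from $\Pi\br_0$.'' Since $\y_0 = (1-\beta)\br_0$ with $\br_0 = \tfrac{1}{Q}\sum_s \g_0^s$, and the $\g_0^s$ depend on the local iterates $\x_0^s$, you must invoke the local-update bound (Lemma~\ref{lem:sum_ub}) at $t=0$; because that lemma's right-hand side itself contains $\norm{\Pi\y_t}^2$, this yields a \emph{self-referential} inequality in $\E[\norm{\Pi\y_0}^2]$, which is then solved using the $\eta_a$ cap. The resulting bound contributes not only the $(1-\beta)\sigma^2/Q$ term but also $\norm{\nabla F(\1\bar{x}_0^{\T})}^2$, $L^2\norm{\Pi\x_0}^2$, and (via $\norm{\nabla f(\bar{x}_0)}^2\le 2L\Delta_f$) an extra $\Delta_f$ piece --- this is why the leading constant is $4\Delta_f/(\teta T)$ rather than the $3\Delta_f/(\teta T)$ you would get from the telescoping alone.

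Second, your bound on $\eta_a^2 Q L^2\sigma^2$ via ``$\eta_a Q = \teta/\eta_s \lesssim \teta/(1-\beta)$'' has the inequality the wrong way: $\eta_s \le (1-\beta)/\sqrt{6c_0}$ gives $\teta/\eta_s \ge \sqrt{6c_0}\,\teta/(1-\beta)$, a \emph{lower} bound on $\eta_a Q$. The paper instead reads off $\eta_a^2 L^2\sigma^2 \le 8L\Delta_f/(3QT)$ directly from the explicit denominator $\sqrt{3QL\sigma^2 T/(8\Delta_f)}$ in \eqref{eq:teta_cO}. Also, note that the initial-condition terms in \eqref{eq:ncvx} already have the $\teta$ cancelled (each Lyapunov coefficient in \eqref{eq:tcL_def} carries a $\teta$ that offsets the $1/(\teta T)$ from telescoping), so your ``$\sqrt a/T$ piece'' discussion for those terms is unnecessary: they are directly $\order{1/((1-\beta)T)}$.
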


\begin{remark}
    \label{rem:optimal_communication}
    
    From \eqref{eq:ncvx_teta}, if we further assume the mild conditions $L\normi{\Pi\x_0}^2 = \orderi{n \Delta_f}$ and $\normi{\nabla F(\1\bar{x}_0)}^2 = \orderi{n L \Delta_f}$, then the communication complexity of \DSMTL~to achieve $\varepsilon$-stationarity, i.e., $\sum_{t=0}^{T-1}\E\brki{\normi{\nabla f(\bar{x}_t)}^2}/T \leq \varepsilon^2$, is given by
    \begin{align*}
        \order{\frac{L\Delta_f \sigma^2}{nQ\varepsilon^4} + \frac{L\Delta_f }{\sqrt{1-\lambda}\varepsilon^2} }.
    \end{align*}
    Therefore, to achieve the optimal communication complexity \cite{lu2021optimal,yuan2022revisiting}
    $$ T=\order{\frac{\Delta_f L}{\sqrt{1-\lambda}\varepsilon^2}},$$ 
    it suffices to set 
    \begin{align*}
        Q = \ceil{\frac{\sqrt{1-\lambda}\sigma^2}{n\varepsilon^2}}.
    \end{align*}
    
    Then, the corresponding sample complexity is 
    \begin{align*}
        QT = \order{\frac{L\Delta_f \sigma^2}{n\varepsilon^4}  + \frac{\Delta_f L}{\sqrt{1-\lambda}\varepsilon^2}},
    \end{align*}
    which matches the optimal iteration complexity established in \cite{lu2021optimal,yuan2022revisiting}. Moreover, for any integer 
    \begin{align*}
        Q\in\brk{1, \ceil{\frac{\sqrt{1-\lambda}\sigma^2}{n\varepsilon^2}}},
    \end{align*}
    \DSMTL~achieves the optimal iteration complexity.


    For comparison, the LED method \cite{alghunaim2024local} cannot achieve the optimal communication complexity with large $Q$. In particular, LED necessitates a sample complexity of $\orderi{\Delta_f L \sigma^2/\varepsilon^{4} + \Delta_f L\sigma/(\sqrt{1-\lambda}\varepsilon^3) + L\Delta_f/[(1-\lambda)\varepsilon^2]}$ with $Q = \ceili{(1-\lambda)\sigma^2/\varepsilon^{2}}$ in order to achieve the communication complexity $\orderi{\Delta_f L/[(1-\lambda)\varepsilon^{2}]}$. Similarly, the works in \cite{liu2024decentralized,koloskova2020unified,gao2020periodic} do not achieve the optimal communication complexity.
    The work in \cite{yang2024accelerating} attains the optimal communication complexity but results in a sample complexity greater than $\torderi{\sigma^2/[(1-\lambda)^{3}\varepsilon^{4}]}$, which depends on the network topology and generally worsens under sparse graphs. Moreover, the method doe not support an arbitrary number of local updates.

\end{remark}

\begin{remark}
    When $Q = 1$, \DSMTL~reduces to the DSMT method \cite{huang2025accelerated} but improves its transient time to optimal, i.e., 
    \begin{align*}
        \KTN = \order{\frac{n}{1-\lambda}},
    \end{align*}
    where 
    \begin{align*}
        \KTN:=\inf_{K}\crk{\frac{1}{k}\sum_{t=0}^{k-1}\E\brk{\norm{\nabla f(\bar{x}_t)}^2}\leq \order{\frac{1}{\sqrt{nk}}},\ \forall k\geq K}.
    \end{align*}

\end{remark}

\begin{proof}[\normalfont\textbf{Proof of Theorem \ref{thm:ncvx}}]
    We first bound $\E\brki{\normi{\Pi\y_0}^2}$ by noting that $\y_0 = (1-\beta)\sum_{\ell=0}^{Q-1}\g_0^\ell/Q$. Following a similar derivation as in \eqref{eq:ztf_s2}, we have
    \begin{equation}
        \label{eq:Piy0}
        \begin{aligned}
            &\E\brk{\norm{\Pi\y_0}^2} \leq 3\beta^2\norm{\nabla F(\1\bar{x}_0^{\T})}^2 + \frac{3(1-\beta)^2 n\sigma^2}{Q} + \frac{3(1-\beta)^2L^2}{Q}\sum_{\ell=0}^{Q-1}\E\brk{\norm{\x_0^\ell - \1\bar{x}_0^{\T}}^2}\\
            &\leq 3\beta^2\norm{\nabla F(\1\bar{x}_0^{\T})}^2 + \frac{3(1-\beta)^2 n\sigma^2}{Q} + 27(1-\beta)^2L^2\norm{\Pi\x_0}^2 + 9\eta_a^2L^2(1-\beta)^2Qn\sigma^2 \\
            &\quad + 9\eta_a^2L^2(1-\beta)^2Q^2n\norm{\nabla f(\bar{x}_0)}^2 + 9\eta_a^2Q^2L^2(1-\beta)^2\E\brk{\norm{\Pi\y_0}^2}\\
            &\quad + 9\eta_a^2Q^2L^2(1-\beta)^2\norm{\nabla F(\1\bar{x}_0^{\T})}^2.
        \end{aligned}
    \end{equation}
    Rearranging \eqref{eq:Piy0} and using the condition on $\eta_a$ yield
    \begin{equation}
        \label{eq:Piy0_ub}
        \begin{aligned}
            \E\brk{\norm{\Pi\y_0}^2} &\leq 8\beta^2\norm{\nabla F(\1\bar{x}_0^{\T})}^2 + \frac{8(1-\beta)^2 n\sigma^2}{Q} \\
            &\quad + 54(1-\beta)^2L^2\norm{\Pi\x_0}^2 + nL(1-\beta)^2\Delta_f,
        \end{aligned}
    \end{equation}
    where we invoked $\normi{\nabla f(x)}^2\leq 2L(f(x) - f^*)$.

   In light of \eqref{eq:Piy0_ub}, the approximate descent property \eqref{eq:tcL} of the Lyapunov function $\tcL_t$ leads to the following inequality:
    \begin{equation}
        \label{eq:ncvx_s1}
        \begin{aligned}
            &\frac{1}{T}\sum_{t=0}^{T-1}\E\brk{\norm{\nabla f(\bar{x}_t)}^2} \leq \frac{3\tcL_0}{\teta T} + \frac{3\teta L\sigma^2}{2nQ} + \frac{99\teta^2 L^2 \sigma^2}{nQ(1-\beta)} + 129(1+63c_0)\eta_a^2 Q L^2 \sigma^2\\
            &\leq \frac{4\Delta_f}{\teta T} + \frac{40c_0L^2\norm{\Pi\x_0}^2}{n(1-\trw)T} + \frac{4\norm{\nabla F(\1\bar{x}_0^{\T})}^2}{n(1-\beta)T} + \frac{672\eta_a^2QL^2(1-\beta)\sigma^2}{T} +  \frac{3\teta L\sigma^2}{2nQ} \\
            &\quad + \frac{99\teta^2 L^2 \sigma^2}{nQ(1-\beta)} + 129(1+63c_0)\eta_a^2 Q L^2 \sigma^2.
        \end{aligned}
    \end{equation}
    Condition \eqref{eq:teta_cO} yields
    \begin{equation}
        \label{eq:teta_inv}
        \begin{aligned}
            \frac{4\Delta_f}{\teta T} &= \sqrt{\frac{48L\Delta_f\sigma^2}{8nQT}} + \frac{120\sqrt{3c_0(1+63c_0)}L\Delta_f}{(1-\beta) T}, \\
            \teta^2 L^2\sigma^2 &\leq \frac{8nQL\Delta_f}{3T},\;
             \eta_a^2L^2\sigma^2\leq \frac{8L\Delta_f}{3QT}.
        \end{aligned}
    \end{equation}

    Substituting \eqref{eq:teta_inv} into \eqref{eq:ncvx_s1} and noting that $1-\trw\sim\orderi{\sqrt{1-\lambda}}$ yields the desired result \eqref{eq:ncvx_teta}. 
\end{proof}

\subsection{PL Condition Case}

Under the additional PL condition (Assumption~\ref{as:PL}), we derive a refined recursion for $\tcL_t$, stated below.

\begin{lemma}
    \label{lem:tcL_pl}
    Let Assumptions~\ref{as:abc},~\ref{as:smooth},~\ref{as:graph_p},~and~\ref{as:PL} hold. Set 
   \begin{align*}
        \eta_a\leq \frac{1}{15\sqrt{2(1+63c_0)}Q\mu},\;\eta_s\leq \frac{1-\beta}{\sqrt{6c_0}},\; \teta = \eta_a\eta_s Q,\; \beta\geq \trw.
    \end{align*}
    We have for all $t\geq 0$ that
    \begin{equation}
        \label{eq:tcL_pl}
        \begin{aligned}
            \tcL_{t + 1}&\leq \prt{1-\frac{\teta\mu}{3}}\tcL_t - \frac{\teta}{4}\E\brk{\norm{\bavg_t}^2} +  \frac{\teta^2L\sigma^2}{2nQ} + \frac{66\teta^3L^2\sigma^2}{nQ(1-\beta)}\\
            &\quad +86(1+63c_0)\teta\eta_a^2QL^2\sigma^2.
        \end{aligned}
    \end{equation}
\end{lemma}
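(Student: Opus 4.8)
The plan is to re-run the telescoping of Lemmas~\ref{lem:descent}--\ref{lem:ztf} that yields \eqref{eq:tcL}, but now requiring that the coefficient of $\E\brk{\tcL_t}$ come out as $1-\frac{\teta\mu}{3}$ instead of $1$. The enabling observation is that the stated step sizes make $\teta\mu$ small: $\eta_a\le \frac{1}{15\sqrt{2(1+63c_0)}Q\mu}$ and $\eta_s\le\frac{1-\beta}{\sqrt{6c_0}}$ give $\teta\mu=\eta_a\eta_sQ\mu\le\frac{1-\beta}{3}$, whence $\beta\le 1-\frac{\teta\mu}{3}$, $\trw\le\beta\le1-\frac{\teta\mu}{3}$, and $\frac{1+\beta}{2}\le1-\frac{\teta\mu}{3}$. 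These are precisely the self-contraction factors of $\E\brk{\norm{\bar{z}_t}^2}$ (Lemma~\ref{lem:barz}), of $\cR_t^x$ and $\cR_t^y$ (Lemmas~\ref{lem:tPitx} and~\ref{lem:tPi_cRy}), and of $\E\brk{\norm{\z_t-\nabla F(\1\bar{x}_t^{\T})}^2}$ (Lemma~\ref{lem:ztf}), so the last four terms of $\tcL_t$ already contract at rate at least $1-\frac{\teta\mu}{3}$; meanwhile the cross error terms that cancelled in the proof of Lemma~\ref{lem:tcL} still cancel, since replacing a self-coefficient $1$ by $1-\frac{\teta\mu}{3}$ only widens the available margins.

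The one new ingredient is the treatment of the leading term $f(\bar{d}_t)-f^*$, for which \eqref{eq:tcL} provides the negative budget $-\frac{\teta}{3}\E\brk{\norm{\nabla f(\bar{x}_t)}^2}$. I would split this budget into two halves, apply the PL inequality \eqref{eq:PL} to the first half to produce $-\frac{\teta\mu}{3}\E\brk{f(\bar{x}_t)-f^*}$, and then relate $f(\bar{x}_t)$ and $f(\bar{d}_t)$ through $L$-smoothness, $f(\bar{d}_t)-f^*\le f(\bar{x}_t)-f^*+\iprod{\nabla f(\bar{x}_t)}{\bar{d}_t-\bar{x}_t}+\frac{L}{2}\norm{\bar{d}_t-\bar{x}_t}^2$, together with the identity $\bar{d}_t-\bar{x}_t=-\frac{\teta\beta}{1-\beta}\bar{z}_t$ from \eqref{eq:dt_xt}. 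The inner product is split by a weighted Young inequality calibrated so that its $\norm{\nabla f(\bar{x}_t)}^2$ part is exactly absorbed by the retained half of the gradient budget; the residual contributions are then of order $\frac{\teta^3\mu L}{(1-\beta)^2}\E\brk{\norm{\bar{z}_t}^2}$ (using $\bar{d}_t-\bar{x}_t=O(\teta/(1-\beta))\bar{z}_t$ and $\mu\le L$), and these, together with $\frac{L}{2}\norm{\bar{d}_t-\bar{x}_t}^2$, are folded into the slack $\frac{\teta\mu}{3}\cdot\frac{4\teta^3L^2}{(1-\beta)^3}\E\brk{\norm{\bar{z}_t}^2}$ that the tightening created in the $\bar{z}_t$-component of $\tcL_t$, invoking once more $\teta\mu\le(1-\beta)/3$. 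Collecting the noise terms, which enlarge by an absolute constant factor relative to \eqref{eq:tcL} and thus yield the constants $66$ and $86$ in place of $33$ and $43$, gives \eqref{eq:tcL_pl}.

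I expect the main obstacle to be exactly this second step together with re-confirming that every absorption inequality in the proof of Lemma~\ref{lem:tcL} still holds once each self-coefficient is tightened from $1$ to $1-\frac{\teta\mu}{3}$: one has to check that the $O(\teta\mu)$ leftovers from the PL-plus-smoothness conversion genuinely fit inside the pre-existing margins of the $\norm{\bar{z}_t}^2$ coefficient (and, where a single margin is short, that part of the retained $\frac{\teta}{4}\E\brk{\norm{\bavg_t}^2}$ budget catches the remainder). No fundamentally new idea is required beyond the PL/smoothness conversion -- the effort is purely in re-verifying the constants -- and the argument parallels the PL analysis of DSMT in \cite{huang2025accelerated}, here specialized to the $Q\ge 1$ local-update scheme.
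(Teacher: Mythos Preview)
Your plan is correct and matches the paper's overall strategy: re-run the Lyapunov combination of Lemmas~\ref{lem:descent}--\ref{lem:ztf} and verify that each self-coefficient can be tightened from $1$ to $1-\frac{\teta\mu}{3}$ using $\teta\mu\le\frac{1-\beta}{3}\le\frac{1-\trw}{3}$. The paper does exactly this, re-deriving the analogue of \eqref{eq:tcL_s1} and checking that each bracket is bounded by $1-c(1-\beta)$ or $1-c(1-\trw)$ for explicit constants $c$.

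The one point of departure is the treatment of the leading term $f(\bar d_t)-f^*$. The paper does \emph{not} work from the budget $-\frac{\teta}{3}\E\brk{\norm{\nabla f(\bar x_t)}^2}$ and then convert $f(\bar x_t)$ to $f(\bar d_t)$ via smoothness as you propose. Instead it goes back to the proof of Lemma~\ref{lem:descent} and replaces the inner-product bound \eqref{eq:descent_s2} by the variant \eqref{eq:descent_s2_nd}, which directly yields $-\frac{\teta}{2}\E\brk{\norm{\nabla f(\bar d_t)}^2}$; PL is then applied at $\bar d_t$ with no conversion needed. The $\norm{\nabla f(\bar x_t)}^2$ terms that appear downstream (e.g.\ in \eqref{eq:sum_ub} and \eqref{eq:cRy_sim}) are handled by the simple relation $\norm{\nabla f(\bar x_t)}^2\le 2\norm{\nabla f(\bar d_t)}^2+\frac{2\teta^2\beta^2L^2}{(1-\beta)^2}\norm{\bar z_t}^2$. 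Your route also works, but it costs one extra Young/absorption step and makes the $\bar z_t$-margin check slightly more delicate; the paper's choice is cleaner bookkeeping and explains why the descent coefficient on $\norm{\bavg_t}^2$ comes out as $-\frac{\teta}{4}$ rather than the $-\frac{\teta}{5}$ inherited from \eqref{eq:tcL}.
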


\begin{proof}
    See Appendix \ref{app:tcL_pl}.
\end{proof}

In light of Lemma~\ref{lem:tcL_pl}, we obtain the convergence of \DSMTL~for minimizing smooth objective function satisfying the PL condition, stated in Theorem~\ref{thm:pl} below.
\begin{theorem}
    \label{thm:pl}
    Let Assumptions~\ref{as:abc},~\ref{as:smooth},~\ref{as:graph_p},~and~\ref{as:PL} hold. Denote $\kappa:=L/\mu$. Set 
    \begin{align*}
        \eta_a\leq \frac{1}{15\sqrt{2(1+63c_0)}Q\mu},\;\eta_s\leq \frac{1-\beta}{\sqrt{15c_0}},\; \teta = \eta_a\eta_s Q,\; \beta\geq \trw.
    \end{align*}

    Then, for all $t\geq 0$,
    \begin{equation}
        \label{eq:pl_ub}
        \begin{aligned}
            &\frac{1}{n}\sumn \E\brk{f(x_{i,t}) - f^*}\leq \prt{\frac{4+\beta}{5}}^t \frac{L\tcH_0}{2n} +  3\prt{1-\frac{\teta\mu}{3}}^t \tcL_0  
            \\
            &\quad + \frac{4\teta L\sigma^2}{n\mu Q} + \frac{220\teta^2L^2\sigma^2}{n\mu Q(1-\beta)} + \frac{520(1+63c_0)\eta_a^2QL^2\sigma^2}{\mu},
        \end{aligned}
    \end{equation}
    where 
    \begin{equation}
        \label{eq:tcH0}
        \begin{aligned}
            \tcH_0 &\leq 2c_0\trw \norm{\Pi\x_0}^2 + 2\norm{\nabla F(\1\bar{x}_0)}^2 + \frac{8(1-\beta)n\sigma^2}{Q} + nL(1-\beta)\Delta_f.
        \end{aligned}
    \end{equation}

    In particular, if we choose $\eta_a = \orderi{1/(Q\mu T)}$, $\eta_s = \orderi{1-\trw}$, $\teta = \eta_a\eta_sQ$, and $\beta = \trw$, then
    \begin{equation}
        \label{eq:pl_order}
        \begin{aligned}
            &\frac{1}{n}\sumn\E\brk{f(x_{i,T}) - f^*} \\
            & = \order{\frac{\kappa \sigma^2 }{n \mu Q T} + \frac{\kappa^2\sigma^2}{\mu Q T^2} + L \exp\prt{-\frac{T\sqrt{1-\lambda}}{5}} + \exp\prt{-\frac{T\teta \mu}{3}}},
        \end{aligned}
    \end{equation}
    where $\orderi{\cdot}$ hides numerical constants.
\end{theorem}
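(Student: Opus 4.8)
The plan is to combine the contraction of the Lyapunov function $\tcL_t$ from Lemma~\ref{lem:tcL_pl} with a second, geometrically contracting potential $\tcH_t$ that tracks only the consensus and momentum errors — crucially with coefficients that do not vanish with the step size — and then convert both into a bound on $\frac1n\sumn\E[f(x_{i,t})-f^*]$ through smoothness.

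First, I would unroll \eqref{eq:tcL_pl}: dropping $-\frac{\teta}{4}\E[\norm{\bavg_t}^2]$ and using $\sum_{s\ge 0}(1-\teta\mu/3)^s\le 3/(\teta\mu)$ gives $\E[\tcL_t]\le(1-\teta\mu/3)^t\E[\tcL_0]+\frac{3}{\teta\mu}\bigl(\frac{\teta^2L\sigma^2}{2nQ}+\frac{66\teta^3L^2\sigma^2}{nQ(1-\beta)}+86(1+63c_0)\teta\eta_a^2QL^2\sigma^2\bigr)$; keeping the $\bavg$-term, the rearrangement $\frac{\teta}{4}\E[\norm{\bavg_t}^2]\le(1-\teta\mu/3)\E[\tcL_t]-\E[\tcL_{t+1}]+(\text{variance})$ will be used for weighted sums of $\E[\norm{\bavg_t}^2]$ later. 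Since $f(\bar d_t)\ge f^*$ and the remaining summands of $\tcL_t$ are nonnegative, this already controls $\E[f(\bar d_t)-f^*]$, $\E[\norm{\bar z_t}^2]$ and $\cR_t^x,\cR_t^y$ — but only at the slow rate $(1-\teta\mu/3)$, and the $\teta$-weighted coefficients inside $\tcL_t$ (e.g.\ $\frac{11\teta L^2}{n(1-\trw)}\cR_t^x$ and $\frac{4\teta^3L^2}{(1-\beta)^3}\norm{\bar z_t}^2$) make it impossible to extract $\frac{L}{2n}\norm{\Pi\x_t}^2$ or $\frac{L\teta^2\beta^2}{(1-\beta)^2}\norm{\bar z_t}^2$ with an $O(1)$ prefactor, which is exactly what the target quantity needs.

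This is why a second potential is required: I would take $\tcH_t$ to be a suitable nonnegative combination of $\cR_t^x$, $\cR_t^y$, $\E[\norm{\z_t-\nabla F(\1\bar x_t^\T)}^2]$ and $\E[\norm{\bar z_t}^2]$, with absolute constants chosen large enough that $\E[\norm{\Pi\x_t}^2]\le\E[\tcH_t]$ and $\frac{2n\teta^2\beta^2}{(1-\beta)^2}\E[\norm{\bar z_t}^2]\le\E[\tcH_t]$. Feeding Lemmas~\ref{lem:barz},~\ref{lem:sum_ub},~\ref{lem:tPitx},~\ref{lem:tPi_cRy},~and~\ref{lem:ztf} into this combination and rearranging, one should obtain a recursion $\E[\tcH_{t+1}]\le\frac{4+\beta}{5}\E[\tcH_t]+a_t$, where $a_t$ collects only a variance floor together with terms proportional to $\E[f(\bar x_t)-f^*]$ and $\E[\norm{\bavg_t}^2]$ — everything else being foldable into the contraction because, under the hypothesized step sizes, the cross-couplings among $\cR^x,\cR^y,\norm{\z_t-\nabla F}^2,\norm{\bar z_t}^2$ each carry a factor $\teta^2$, $\eta_a^2Q^2L^2$, or $(1-\beta)$, and the chosen rate $\frac{4+\beta}{5}$ (deliberately looser than the raw mixing rate $\max\{\trw,(1+\beta)/2\}=(1+\beta)/2$) leaves the needed slack. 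Making this bookkeeping go through — in particular absorbing the time-$(t+1)$ copies of $\cR^x$, $\cR^y$ and $\norm{\bar z}^2$ that Lemmas~\ref{lem:sum_ub}~and~\ref{lem:tPi_cRy} place on the right-hand side — will be the main obstacle, and it parallels the consensus-error analysis underlying DSMT. The initial value $\E[\tcH_0]$ is bounded exactly as $\E[\norm{\Pi\y_0}^2]$ was in \eqref{eq:Piy0_ub}, giving \eqref{eq:tcH0}.

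Finally, I would assemble the bound. $L$-smoothness and $\sumn(x_{i,t}-\bar x_t)=0$ give $\frac1n\sumn f(x_{i,t})-f^*\le f(\bar x_t)-f^*+\frac{L}{2n}\norm{\Pi\x_t}^2$; and $L$-smoothness at $\bar d_t$ with $\bar d_t-\bar x_t=-\frac{\teta\beta}{1-\beta}\bar z_t$ (\eqref{eq:dt_xt}) and $\norm{\nabla f(\bar d_t)}^2\le 2L(f(\bar d_t)-f^*)$ give $f(\bar x_t)-f^*\le 2(f(\bar d_t)-f^*)+\frac{L\teta^2\beta^2}{(1-\beta)^2}\norm{\bar z_t}^2$. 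Taking expectations and using $f(\bar d_t)-f^*\le\tcL_t$ and the two domination properties of $\tcH_t$, one gets $\frac1n\sumn\E[f(x_{i,t})-f^*]\le 2\E[\tcL_t]+\frac{L}{2n}\E[\tcH_t]$. Now I substitute the unrolled $\tcH$-recursion: the $\E[f(\bar x_s)-f^*]$ contribution to $a_s$ is split off as $O(1)\cdot\E[\tcL_s]$ plus a $\teta^2$-small multiple of $\E[\tcH_s]$ (reabsorbed into the contraction), and the remaining $\E[\tcL_s]$ and $\E[\norm{\bavg_s}^2]$ pieces are summed against the kernel $(\frac{4+\beta}{5})^{T-1-s}$. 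The decisive numerical fact is $1-\teta\mu/3\ge\frac{4+\beta}{5}$ — which the prescribed step sizes satisfy with room to spare — so that $\sum_{s=0}^{T-1}(\frac{4+\beta}{5})^{T-1-s}(1-\teta\mu/3)^s\le\frac{C}{1-\beta}(1-\teta\mu/3)^{T}$ and the two geometric rates stay separated; this produces \eqref{eq:pl_ub}, with its $(\frac{4+\beta}{5})^t$ and $(1-\teta\mu/3)^t$ terms and a variance floor equal to a constant times $\frac1{\teta\mu}$ times the variance terms of $\tcL$. For the rate \eqref{eq:pl_order} I would set $\eta_a=\Theta(1/(Q\mu T))$, $\eta_s=\Theta(1-\trw)$, $\teta=\eta_a\eta_sQ=\Theta((1-\trw)/(\mu T))$, $\beta=\trw$, use $(1-\teta\mu/3)^T\le e^{-T\teta\mu/3}$ and $(\frac{4+\beta}{5})^T\le e^{-(1-\trw)T/5}\le e^{-\sqrt{1-\lambda}\,T/5}$ (since $1-\trw\gtrsim\sqrt{1-\lambda}$ by Lemma~\ref{lem:lca}), and simplify $\frac{\teta L\sigma^2}{n\mu Q}=O(\frac{\kappa\sigma^2}{n\mu QT})$, $\frac{\teta^2L^2\sigma^2}{n\mu Q(1-\beta)}=O(\frac{\kappa^2\sigma^2}{\mu QT^2})$, $\frac{\eta_a^2QL^2\sigma^2}{\mu}=O(\frac{\kappa^2\sigma^2}{\mu QT^2})$.
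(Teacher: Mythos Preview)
Your proposal is correct and follows essentially the same architecture as the paper's proof: introduce an auxiliary potential $\tcH_t$ combining $\cR_t^x$, $\cR_t^y$, $\E[\normi{\z_t-\nabla F(\1\bar x_t^{\T})}^2]$, and $\E[\normi{\bar z_t}^2]$; show it contracts at rate $\frac{4+\beta}{5}$ with a source controlled by $\E[\tcL_t]$; invoke $1-\teta\mu/3\ge\frac{4+\beta}{5}$ to keep the two geometric rates separated when unrolling; and connect to $\frac1n\sumn\E[f(x_{i,t})-f^*]$ via smoothness exactly as you describe.

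Two small differences worth noting. First, the paper packages $\frac{L}{2n}\tcH_t$ together with an extra copy of $\frac{\teta^2 L}{(1-\beta)^2}\E[\normi{\bar z_t}^2]$ into a second wrapper $\tcD_t$ and derives the $\frac{4+\beta}{5}$ contraction for $\tcD_t$ rather than $\tcH_t$; this is only cosmetic. Second, instead of your telescoping of $\E[\normi{\bavg_s}^2]$ against the kernel, the paper bounds $\E[\normi{\bavg_t}^2]\le\frac{30L^2}{n}\E[\tcH_t]+10L\E[\tcL_t]+6\eta_a^2QL^2\sigma^2$ directly (via Lemma~\ref{lem:sum_ub} and smoothness), absorbs the $\tcH_t$ piece into the contraction, and is left with only $\E[\tcL_t]$ on the right-hand side before unrolling --- slightly cleaner bookkeeping, but your route works too. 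One caution on wording: your ``absolute constants'' for $\tcH_t$ cannot literally be $O(1)$ on all four blocks; the $\cR^x\!\to\!\cR^y$ coupling in \eqref{eq:cRy_sim} carries a factor $L^2$ with no compensating step size, so (as the paper does) the coefficients on $\cR_t^y$ and $\E[\normi{\z_t-\nabla F}^2]$ must be scaled by $\eta_a^2Q^2$, and the coefficient on $\E[\normi{\bar z_t}^2]$ by $\teta^2 n/(1-\beta)^2$, for the recursion to close at rate $\frac{4+\beta}{5}$. Your stated domination requirements already force this, so it is only a matter of phrasing.
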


\begin{remark}
    \label{rem:pl}
    Suppose the mild conditions $L\normi{\Pi\x_0}^2 = \orderi{n \Delta_f}$, $\normi{\nabla F(\1\bar{x}_0)}^2 = \orderi{n L \Delta_f}$ hold. From \eqref{eq:pl_order}, the communication complexity of \DSMTL~to achieve an $\varepsilon$-solution, i.e., $\sumn\E\brki{f(x_{i,T}) - f^*}/n \leq \varepsilon$, is given by
    \begin{align*}
        \order{\frac{\kappa\sigma^2}{n\mu Q \varepsilon} + \frac{\kappa\sigma}{\sqrt{\mu Q \varepsilon}} + \frac{\kappa}{\sqrt{1-\lambda}}\log\frac{1}{\varepsilon}}.
    \end{align*}
    If we set the number of local updates as 
    \begin{align*}
        Q = \ceil{\frac{ \sqrt{1-\lambda}\sigma^2 }{n\mu \varepsilon\log(1/\varepsilon)} + \frac{(1-\lambda)\sigma^2}{\mu\varepsilon \log(1/\varepsilon)}},
    \end{align*}
    then the number of communication rounds required to reach an $\varepsilon$-solution scales as
    \begin{align*}
        T = \order{\frac{\kappa}{\sqrt{1-\lambda}}\log\frac{1}{\varepsilon}},
    \end{align*}
    which is optimal with respect to the graph specifics \cite{yuan2022revisiting}.
    
    The corresponding sample complexity is 
    \begin{align*}
        QT = \order{\frac{\kappa\sigma^2}{n\mu \varepsilon} + \frac{\kappa\sigma^2}{n\mu\varepsilon}\prt{\sqrt{n}(1-\lambda)^{1/4} + n\sqrt{1-\lambda}} + \frac{\kappa\sigma}{\sqrt{\mu\varepsilon}} + \frac{\kappa}{\sqrt{1-\lambda}}\log\frac{1}{\varepsilon}}.
    \end{align*}
    Suppose the condition $1-\lambda\sim\orderi{1/n^2}$ holds, which is generally satisfied for sparsely connected undirected graphs including rings and lines \cite{nedic2018network}. Then the resulting sample complexity becomes 
    $$\order{\frac{\kappa\sigma^2}{n\mu\varepsilon} + \frac{\kappa}{\sqrt{1-\lambda}}\log\frac{1}{\varepsilon}},$$  
    which is optimal \cite{yuan2022revisiting}. 

    By comparison, the works in \cite{alghunaim2024local,koloskova2020unified} cannot achieve the optimal communication complexity with large $Q$, and the method in \cite{yang2024accelerating} requires a sample complexity greater than $\torderi{\sqrt{\kappa}\sigma^2/[\mu(1-\lambda)^{3}\varepsilon]}$ to reach optimal communication, exhibiting an unfavorable dependence on the network topology.
\end{remark}

\begin{remark}
     When $Q = 1$, \DSMTL~reduces to the DSMT method but improves the transient time to nearly optimal, i.e., 
    \begin{align*}
        \KTP = \order{\max\crk{\sqrt{\frac{1}{1-\lambda}}, n}},
    \end{align*}
    where 
    \begin{align*}
        \KTP:= \inf_{K}\crk{\frac{1}{n}\sumn \E\brk{f(x_{i,k})-f^*}\leq \order{\frac{1}{nk}},\ \forall k\geq K }.
    \end{align*}
\end{remark}

\begin{proof}[\normalfont\textbf{Proof of Theorem \ref{thm:pl}}]
    Our goal is to derive an upper bound for $\sumn \brki{f(x_{i,t}) - f^*}/n$. To this end, we start with the following relation:
    \begin{equation}
        \label{eq:goal_pl}
        \begin{aligned}
            &\frac{1}{n}\sumn \brk{f(x_{i,t}) - f^*}\leq f(\bar{d}_t) - f^* + \frac{1}{n}\sumn\inpro{\nabla f(\bar{d}_t), x_{i,t} - \bar{x}_t + \bar{x}_t - \bar{d}_t} \\
            &\quad + \frac{L}{2n}\sumn \norm{x_{i,t} - \bar{d}_t}^2\\
            &\leq f(\bar{d}_t) - f^* + \frac{1}{2L}\norm{\nabla f(\bar{d}_t)}^2 + L\norm{\bar{d}_t - \bar{x}_t}^2 + \frac{L}{2n}\sumn\norm{x_{i,t}-\bar{x}_t}^2\\
            &\leq 2\brk{f(\bar{d}_t) - f^*} + \frac{\teta^2\beta^2 L}{(1-\beta)^2} \norm{\bar{z}_t}^2 + \frac{L}{2n}\cR_t^x,\;t\geq 0,
        \end{aligned}
    \end{equation}
    where we invoked Young's inequality and \eqref{eq:dt_xt}. 
    The last two terms can be further bounded using a Lyapunov function $\tcD_t$ defined as
    \begin{align}
    \label{eq:tcD_def}
        \tcD_t:= \frac{\teta^2 L}{(1-\beta)^2} \E\brk{ \norm{\bar{z}_t}^2} + \frac{L}{2n}\tcH_t,
    \end{align}
    where 
    \begin{equation}
        \label{eq:tcH}
        \tcH_t := \cR_t^x + \eta_a^2Q^2\cC_5\cR_t^y + \eta_a^2Q^2\cC_6\E\brk{\norm{\z_t - \nabla F(\1\bar{x}_t)}^2} + \frac{\teta^2 n\cC_7}{(1-\beta)^2}\E\brk{\norm{\bar{z}_t}^2},
    \end{equation} 
    with undetermined positive coefficients $\cC_5$, $\cC_6$, and $\cC_7$.
    Consequently,
    \begin{equation}
        \label{eq:goal_pl_E}
        \begin{aligned}
            \frac{1}{n}\sumn \E\brk{f(x_{i,t}) - f^*} &\leq 2\tcL_t + \tcD_t.
        \end{aligned}
    \end{equation}
    
    
    To determine the coefficients in $\tcH_t$, note that from \eqref{eq:tcH}, we have
    \begin{equation}
        \label{eq:tcH_s1}
        \begin{aligned}
            &\tcH_{t + 1} \leq \brk{\trw + \frac{420c_0\eta_a^2 Q^2L^2(1-\beta)^2\cC_5}{1-\trw} + 45\eta_a^2 Q^2L^2(1-\beta)\cC_6}\cR_t^x\\
            &\quad + \brk{\frac{\eta_s^2 c_0}{1-\trw} + \frac{2+\trw}{3}\cC_5 + 15\eta_a^2Q^2L^2(1-\beta)\cC_6}\eta_a^2Q^2\cR_t^y\\
            &\quad + \brk{\frac{8c_0(1-\beta)^2\cC_5}{1-\trw} + \frac{1+\beta}{2}\cC_6}\eta_a^2Q^2\E\brk{\norm{\z_t - \nabla F(\1\bar{x}_t^{\T})}^2}\\
            &\quad + \brk{\frac{123c_0\eta_a^2Q^2L^2(1-\beta)^3\cC_5}{1-\trw} + 15\eta_a^2Q^2L^2(1-\beta)\cC_6 + \beta\cC_7}\frac{\teta^2n}{(1-\beta)^2}\E\brk{\norm{\bar{z}_t}^{2}}\\
            &\quad + \brk{\frac{160c_0n\eta_a^4Q^4L^2(1-\beta)^2\cC_5}{1-\trw} + 15n\eta_a^4Q^4L^2(1-\beta)\cC_6}\crk{2\E\brk{\norm{\nabla f(\bar{d}_t)}^2} \right.\\
            &\left.\quad + \frac{2\teta^2\beta^2L^2}{(1-\beta)^2}\E\brk{\norm{\bar{z}_t}^2}}\\
            &\quad + \brk{\frac{123c_0\eta_a^2Q^2L^2(1-\beta)^4\cC_5}{3(1-\trw)} + 5\eta_a^2Q^2L^2(1-\beta)^2\cC_6 + \cC_7 }\frac{3\teta^2n}{1-\beta}\E\brk{\norm{\bavg_t}^2}\\
            &\quad + \frac{2\teta^2\cC_7\sigma^2}{Q} + 3\eta_a^2 Q(1-\beta)n\sigma^2\brk{16c_0\cC_5 + \cC_6(1-\beta)\prt{1 + \frac{5\eta_a^2Q^2L^2}{1-\beta} + \frac{5\teta^2L^2}{n(1-\beta)}}}.
        \end{aligned}
    \end{equation}
    Invoking $\eta_a\leq 1/(15\sqrt{2(1+63c_0)}QL)$, we choose coefficients to satisfy the following inequalities:
    \begin{subequations}
        \begin{align}
            \trw + \frac{420c_0\eta_a^2 Q^2L^2(1-\beta)^2\cC_5}{1-\trw} + 45\eta_a^2 Q^2L^2(1-\beta)\cC_6 &\leq \frac{3+\beta}{4},\label{eq:cCx}\\
            \frac{\eta_s^2 c_0}{1-\trw} + \frac{2+\trw}{3}\cC_5 + 15\eta_a^2Q^2L^2(1-\beta)\cC_6&\leq \frac{3+\beta}{4}\cC_5,\label{eq:cC5}\\
            \frac{8c_0(1-\beta)^2\cC_5}{1-\trw} + \frac{1+\beta}{2}\cC_6&\leq \frac{3+\beta}{4}\cC_6,\label{eq:cC6}\\
            \frac{125c_0\eta_a^2Q^2L^2(1-\beta)^2\cC_5}{1-\trw} + 16\eta_a^2Q^2L^2(1-\beta)\cC_6 + \beta\cC_7&\leq \frac{3+\beta}{4}\cC_7\label{eq:cC7}.
        \end{align}
    \end{subequations}
    Noting that $\beta\geq \trw$, we set $\cC_6 = 32c_0\cC_5$ to satisfy \eqref{eq:cC6}. Substituting $\cC_6 = 32c_0\cC_5$ and $\beta\geq \trw$ into \eqref{eq:cC5} yields 
    \begin{align}
        \label{eq:C65}
        \frac{\eta_s^2 c_0}{1-\trw}\leq \prt{\frac{1}{12} - 480\eta_a^2Q^2L^2}(1-\beta)\cC_5.
    \end{align}
    Given $\eta_s\leq (1-\beta)/(\sqrt{15c_0})$ and $\eta_a\leq 1/(15\sqrt{2(1+63c_0)}QL)$, we set $\cC_5 = 1$ to satisfy \eqref{eq:C65}. Substituting $\cC_5 = 1$, $\cC_6 = 32c_0$, and $\beta\geq \trw$ into \eqref{eq:cCx} leads to
    \begin{align*}
       1860c_0^2\eta_a^2Q^2L^2(1-\beta) \leq \frac{3(1-\beta)}{4},
    \end{align*}
    which is satisfied when $\eta_a\leq 1/(15\sqrt{2(1+63c_0)}QL)$. Under this condition, we set $\cC_7 = 1$ to satisfy \eqref{eq:cC7}.
    
    Combining the above results, \eqref{eq:tcH_s1} simplifies to
    \begin{equation}
        \label{eq:tcH_s2}
        \begin{aligned}
            \tcH_{t + 1} &\leq \frac{3+\beta}{4}\tcH_t + \eta_a^2 Q^2Ln(1-\beta)\tcL_t 
            + \frac{4\teta^2n}{1-\beta}\E\brk{\norm{\bavg_t}^2}\\
            &\quad + \frac{3\teta^2\sigma^2}{Q} + 145c_0\eta_a^2 Q(1-\beta)n\sigma^2,
        \end{aligned}
    \end{equation}
    where we invoked $\E\brki{\normi{\nabla f(\bar{d}_t)}^2}\leq \E\brki{2L(f(\bar{d}_t) - f^*)}\leq 2\tcL_t$ and $\eta_a\leq 1/(15\sqrt{2(1+63c_0)}QL)$. 
    Noting that 
    $$\bavg_t = \frac{1}{nQ}\sum_{\ell=0}^{Q-1}\sumn \nabla f_i(x_{i,t}^\ell) =  \frac{1}{nQ}\sum_{\ell=0}^{Q-1}\sumn \brki{\nabla f_i(x_{i,t}^\ell) - \nabla f_i(\bar{x}_t)} + \nabla f(\bar{x}_t),$$ we have
    \begin{equation}
        \label{eq:bavg_ub}
        \begin{aligned}
            \E\brk{\norm{\bavg_t}^2} &\leq \frac{2L^2}{nQ}\sum_{\ell=0}^{Q-1}\sumn \E\brk{\norm{x_{i,t}^\ell - \bar{x}_t}^2} + 4\E\brk{\norm{\nabla f(\bar{d}_t)}^2} + \frac{4\teta^2\beta^2L^2}{(1-\beta)^2}\E\brk{\norm{\bar{z}_t}^2}\\
            &\leq \frac{18L^2}{n}\cR_t^x + \frac{6\eta_a^2Q^2L^2}{n}\cR_t^y + \frac{6\eta_a^2Q^2L^2}{n}\E\brk{\norm{\z_t - \nabla F(\1\bar{x}_t)}^2}  \\
            &\quad + 5\E\brk{\norm{\nabla f(\bar{d}_t)}^2} + \frac{5\teta^2\beta^2L^2}{(1-\beta)^2}\E\brk{\norm{\bar{z}_t}^2} + 6\eta_a^2QL^2\sigma^2\\
            &\leq \frac{30L^2}{n}\tcH_t + 10L\tcL_t + 6\eta_a^2QL^2\sigma^2,
        \end{aligned}
    \end{equation}
    where we invoked \eqref{eq:sum_ub} and \eqref{eq:tcH}.

    Substituting \eqref{eq:bavg_ub} into \eqref{eq:tcH_s2} and \eqref{eq:barz}, we have from the definition of $\tcD_{t}$ in \eqref{eq:tcD_def} that
    \begin{equation}
        \label{eq:tcD}
        \begin{aligned}
            &\tcD_{t + 1} \leq \prt{\frac{3+\beta}{4} + \frac{300\teta^2L^2}{1-\beta}} \frac{L}{2n}\tcH_t+ \frac{\teta^2L\beta}{(1-\beta)^2}\E\brk{\norm{\bar{z}_t}^2} \\
            &\quad + 4\eta_a^2Q^2L^2(1-\beta)\tcL_t + \frac{30\teta^2\eta_a^2QL^3\sigma^2}{1-\beta} + \frac{4\teta^2\sigma^2 L}{nQ} + 73c_0\eta_a^2Q L(1-\beta)\sigma^2,
        \end{aligned}
    \end{equation}
    where we invoked $\eta_s \leq (1-\beta)/\sqrt{15c_0}$. 
    Letting $\teta\leq (1-\beta)/(20\sqrt{15}L)$, we obtain
    \begin{equation}
        \label{eq:tcD_re}
        \begin{aligned}
            \tcD_{t + 1} &\leq  \frac{4+\beta}{5}\tcD_t + 4\eta_a^2Q^2L^2(1-\beta)\tcL_t + \frac{30\teta^2\eta_a^2QL^2\sigma^2}{1-\beta} + \frac{4\teta^2\sigma^2 L}{nQ}\\
            &\quad + 73c_0\eta_a^2Q L(1-\beta)\sigma^2.
        \end{aligned}
    \end{equation}
    
    To bound $\tcL_t$, unrolling \eqref{eq:tcL_pl} yields
    \begin{equation}
        \label{eq:tcL_ub}
        \begin{aligned}
            \tcL_t&\leq \prt{1-\frac{\teta\mu}{3}}^{t}\tcL_0  + \frac{3\teta L\sigma^2}{2n\mu Q} + \frac{198\teta^2L^2\sigma^2}{n\mu Q(1-\beta)} + \frac{258(1+63c_0)\eta_a^2QL^2\sigma^2}{\mu}.
        \end{aligned}
    \end{equation}
    Substituting \eqref{eq:tcL_ub} into \eqref{eq:tcD_re} yields
    \begin{equation}
        \label{eq:tcD_tcL}
        \begin{aligned}
            &\tcD_{t + 1} \leq \frac{4+\beta}{5}\tcD_t + 4\eta_a^2Q^2L^2(1-\beta)\prt{1-\frac{\teta\mu}{3}}^{t}\tcL_0\\
            &\quad + 4\eta_a^2Q^2L^2(1-\beta)\brk{\frac{3\teta L\sigma^2}{2n\mu Q} + \frac{198\teta^2L^2\sigma^2}{n\mu Q(1-\beta)} + \frac{258(1+63c_0)\eta_a^2QL^2\sigma^2}{\mu}}\\
            &\quad + \frac{30\teta^2\eta_a^2QL^2\sigma^2}{1-\beta} + \frac{4\teta^2\sigma^2 L}{nQ} + 73c_0\eta_a^2Q L(1-\beta)\sigma^2.
        \end{aligned}
    \end{equation}
    Letting $\teta \leq (1-\beta)/(5\mu)$ and unrolling \eqref{eq:tcD_tcL}, we have
    \begin{equation}
        \label{eq:tcD_ub}
        \begin{aligned}
            \tcD_t &\leq \prt{\frac{4+\beta}{5}}^t\tcD_0 + 4\eta_a^2Q^2L^2(1-\beta)\prt{1-\frac{\teta\mu}{3}}^t \tcL_0\cdot \sum_{j=0}^{t-1}\prt{\frac{(4+\beta)/5}{1-\teta\mu/3}}^{j}
            \\
            &\quad + 20\eta_a^2Q^2L^2\brk{\frac{3\teta L\sigma^2}{2n\mu Q} + \frac{99\teta^2L^2\sigma^2}{n\mu Q(1-\beta)} + \frac{129(1+63c_0)\eta_a^2QL^2\sigma^2}{\mu}}\\
            &\quad + \frac{150\teta^2\eta_a^2QL\sigma^2}{(1-\beta)^2} + \frac{20\teta^2\sigma^2 L}{nQ(1-\beta)} + 365c_0\eta_a^2Q L\sigma^2.
        \end{aligned}
    \end{equation}
    Note that $\teta\leq (1-\beta)/(5\mu)$ implies
    \begin{align*}
        \sum_{j=0}^{t-1}\prt{\frac{(4+\beta)/5}{1-\teta\mu/3}}^{j}\leq \frac{15}{1-\beta}.
    \end{align*}
    Substituting \eqref{eq:tcL_ub} and \eqref{eq:tcD_ub} into \eqref{eq:goal_pl_E}, and invoking $\eta_a\leq 1/(15\sqrt{2(1+63c_0)}QL)$ leads to the desired result \eqref{eq:pl_ub}, where the upper bound for $\tcD_0$ comes from \eqref{eq:Piy0_ub}.

    \end{proof}

    \section{Numerical Examples}

This section validates our theoretical findings through two numerical examples over ring graphs ($n=100$ and $n=50$). We compare the performance of \DSMTL~with several related methods on two problems: a strongly convex logistic regression problem (satisfying the PL condition), and a nonconvex logistic regression problem.
Both problems are evaluated on the CIFAR-10  dataset \cite{krizhevsky2009learning} in the context of classifying airplane and truck images. We consider a \textit{heterogeneous} data setting, where data samples are sorted by class labels and partitioned among the agents without overlap. All reported results are averaged over $10$ independent trials.

\subsection{Logistic Regression}
    \label{subsec:sims_scvx}

    We first consider a binary classification problem using $\ell_2$-regularized logistic regression, as defined in \eqref{eq:logistic}. Each agent $i$ possesses a distinct local dataset $\mathcal{S}_i{= \crki{(u_j, v_j)}}$ selected from the global dataset $\mathcal{S}$. Here, $u_j\in\R^p$ denotes the image input and $v_j\in\R$ represents the label. The classifier is obtained by solving the following optimization problem:

    \begin{equation}
        \label{eq:logistic}
        \begin{aligned}
            &\min_{x\in\R^{p}} f(x) = \frac{1}{n}\sum_{i=1}^n f_i(x),\;f_i(x) := \frac{1}{|\mathcal{S}_i|} \sum_{j\in\mathcal{S}_i} \log\left[1 + \exp(-x^{\T}u_jv_j)\right] + \frac{\rho}{2}\norm{x}^2,
        \end{aligned}
    \end{equation}
    where $\rho$ is set to $0.2$. 

    We compare \DSMTL~against several distributed stochastic gradient methods with local updates: K-GT \cite{liu2024decentralized}, LED \cite{alghunaim2024local}, PD-SGDM \cite{gao2020periodic}, and Local DSGD \cite{koloskova2020unified}. We also include SCAFFOLD \cite{karimireddy2020scaffold} as a centralized baseline to represent the best possible performance if a parameter server exists. To ensure a fair comparison, all methods use the same ``effective'' stepsize\footnote{Specifically, the stepsize is set such that the averaged iterates $\crki{\bar{x}_t}$ follow the same update $\bar{x}_{t + 1} = \bar{x}_t - \teta \bar{g}_t$ at the $t$-th communication round.}. We evaluate each method using both $Q = 10$ and $Q=20$ local updates to investigate the impact of increased local computation.

    Fig.~\ref{fig:logistic} illustrates the performance of all methods. Under both network sizes, \DSMTL~consistently outperforms the other distributed algorithms and exhibits performance comparable to that of SCAFFOLD. This advantage is particularly pronounced on the larger, poorer-connected network (Fig.~\ref{fig:logistic_n100} and Fig.~\ref{fig:logistic_Q20_n100}), confirming the superior performance of the proposed method. 
    
    Furthermore, comparing the results in Fig.~\ref{fig:logistic} for $Q=10$ (top row) and $Q=20$ (bottom row) shows that increasing the number of local updates improves the final accuracy across all methods. This observation aligns with the theoretical result in \eqref{eq:pl_order}, which indicates that the final error is inversely proportional to  the number of local updates $Q$.

    \begin{figure}[htbp]
        \centering
        \subfloat[Ring graph, $n=50$, $Q=10$, $1-\lambda = 6.6\times10^{-4}$.]{\includegraphics[width=0.49\textwidth]{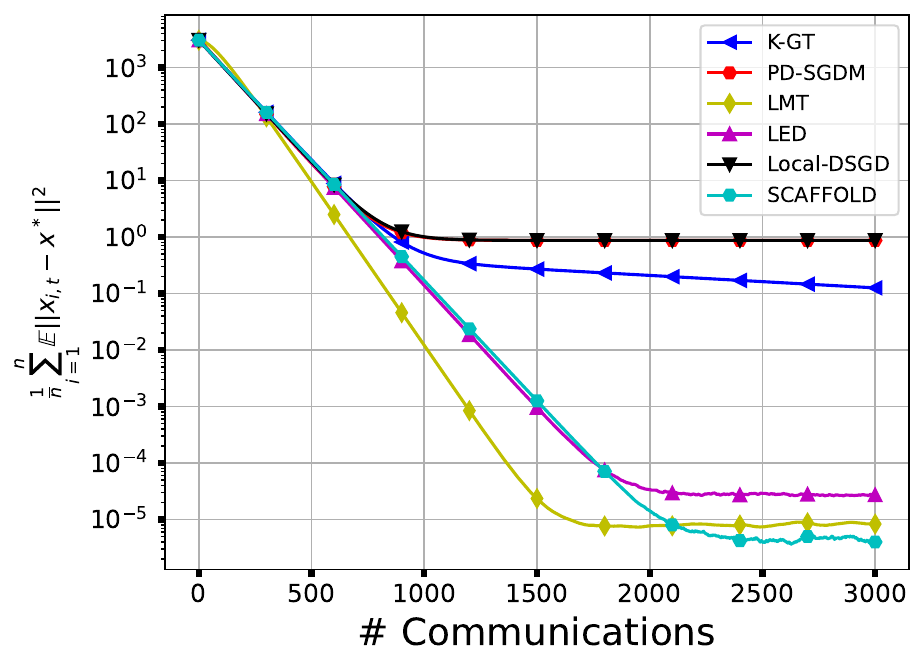}\label{fig:logistic_n100}}
        \subfloat[Ring graph, $n=100$, $Q = 10$, $1-\lambda = 2.6\times 10^{-3}$.]{\includegraphics[width=0.49\textwidth]{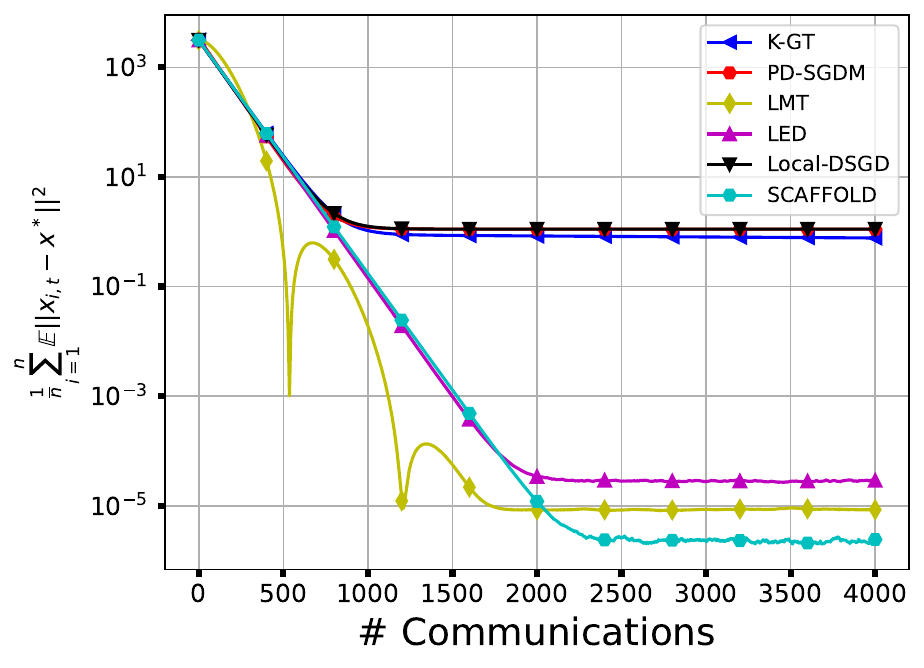}\label{fig:logistic_n50}}\\
        \subfloat[Ring graph, $n=50$, $Q=20$, $1-\lambda = 6.6\times10^{-4}$.]{\includegraphics[width=0.49\textwidth]{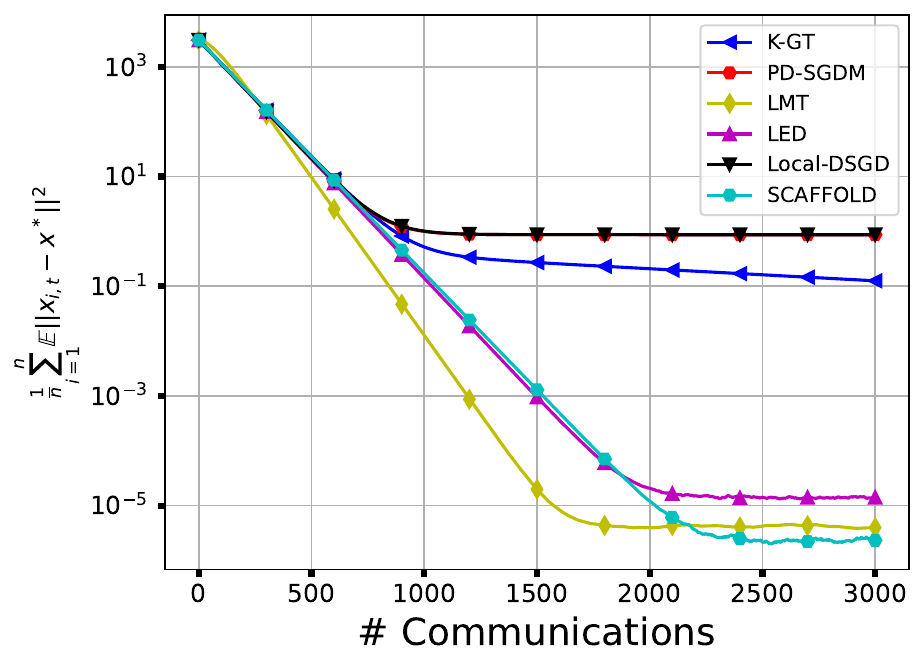}\label{fig:logistic_Q20_n100}}
        \subfloat[Ring graph, $n=100$, $Q=20$, $1-\lambda = 2.6\times 10^{-3}$.]{\includegraphics[width=0.49\textwidth]{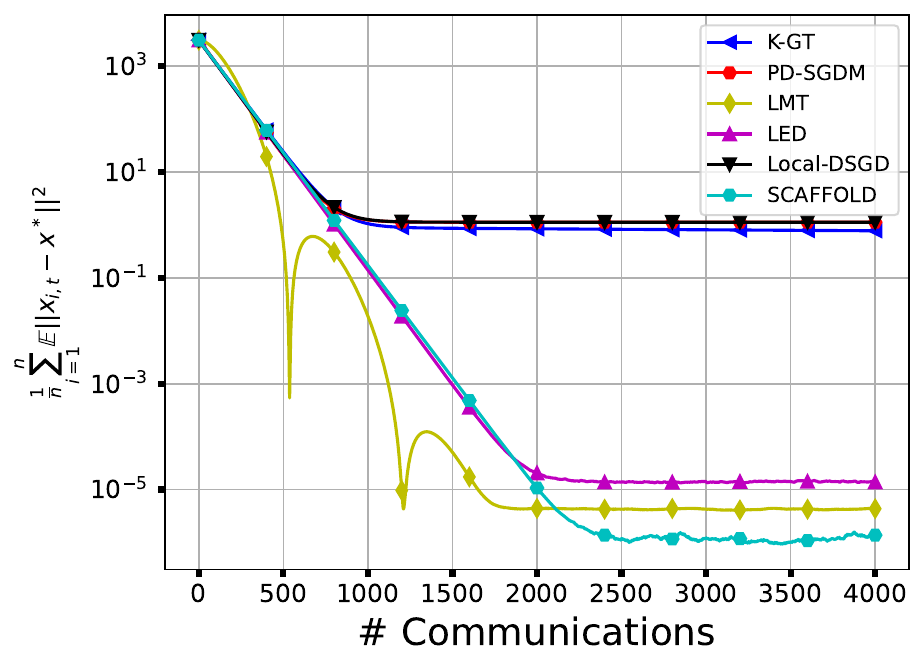}\label{fig:logistic_Q20_n50}}
        \caption{Comparison among \DSMTL, K-GT, LED, PD-SGDM, Local DSGD, and SCAFFOLD for solving Problem \eqref{eq:logistic} on the CIFAR-10 dataset using a constant stepsize. 
        For \DSMTL, K-GT, and SCAFFOLD, the stepsizes are set to $\eta_a = 0.25/Q$ (local updates) and $\eta_s = 0.1$ (outer loop). For LED, the stepsize is $\eta_a\eta_s$, and for PD-SGDM it is $\eta_a\eta_s(1-\beta)$.
        The momentum parameter is set to $\beta =\trw$ for \DSMTL~and PD-SGDM.}
        \label{fig:logistic}
    \end{figure}

    \subsection{Nonconvex Logistic Regression}
    We next consider the logistic regression problem with a nonconvex regularizer, defined in \eqref{eq:ncvx_logistic}. The parameter $\omega$ is set to $0.05$, and $[x]_q$ denotes the $q$-th element of $x\in\R^p$. All other experimental settings are identical to those in Subsection \ref{subsec:sims_scvx}. 
    \begin{equation}
		\label{eq:ncvx_logistic}
		\begin{aligned}
			& \min_{x\in\R^{p}} f(x) = \frac{1}{n}\sum_{i=1}^n f_i(x),\;f_i(x) := \frac{1}{|\mathcal{S}_i|} \sum_{j\in\mathcal{S}_i} \log\left[1 + \exp(-x^{\T}u_jv_j)\right] + \frac{\omega}{2}\sum_{q=1}^p \frac{[x]_q^2}{1 + [x]_q^2}.
		\end{aligned}
	\end{equation}

    \begin{figure}[htbp]
        \centering
        \subfloat[Ring graph, $n=50$, $Q=10$, $1-\lambda = 6.6\times10^{-4}$.]{\includegraphics[width=0.49\textwidth]{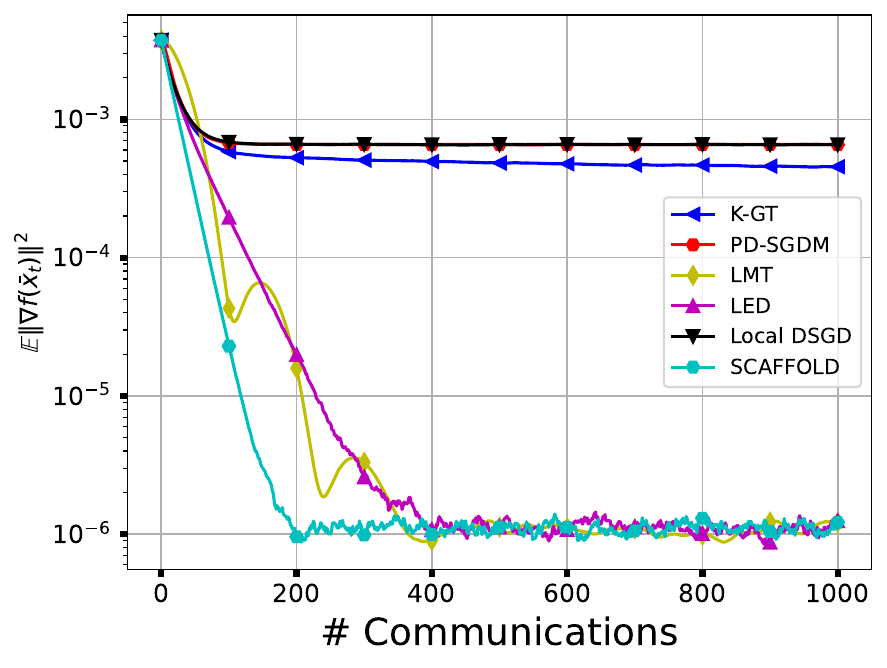}\label{fig:ncvx_logistic_n100}}
        \subfloat[Ring graph, $n=100$, $Q=10$, $1-\lambda = 2.6\times 10^{-3}$.]{\includegraphics[width=0.49\textwidth]{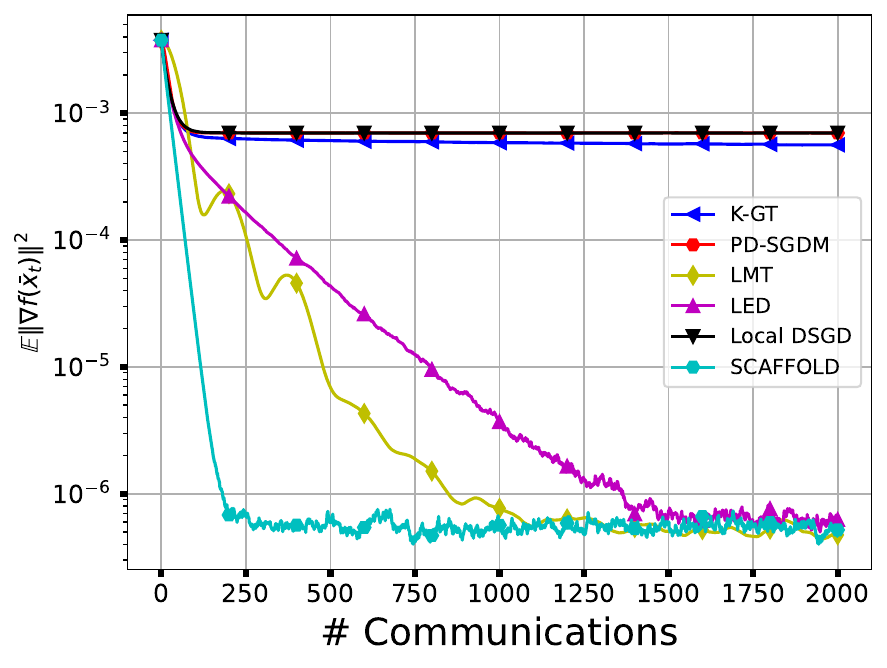}\label{fig:ncvx_logistic_n50}}\\
        \subfloat[Ring graph, $n=50$, $Q=20$, $1-\lambda = 6.6\times10^{-4}$.]{\includegraphics[width=0.49\textwidth]{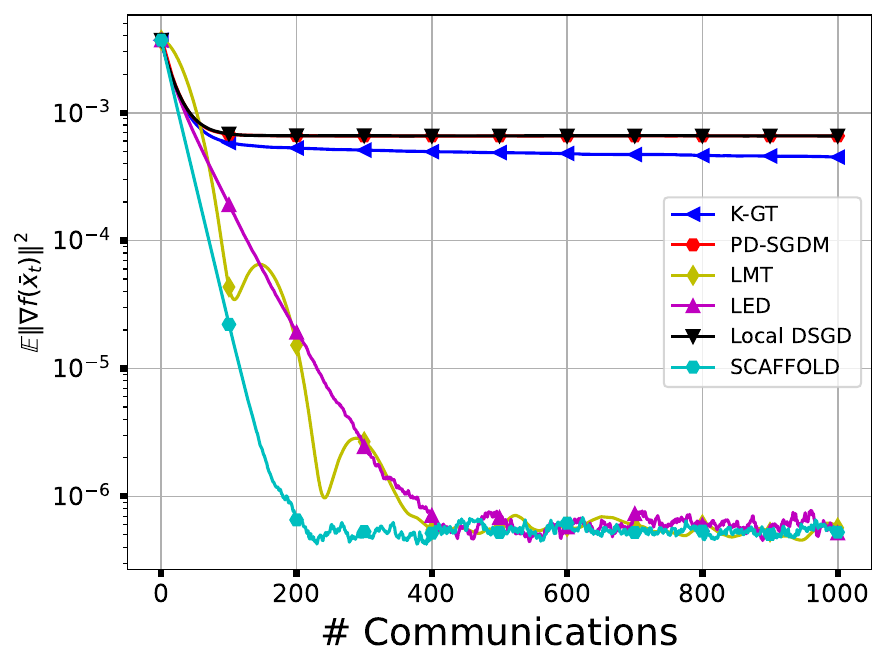}\label{fig:ncvx_logistic_Q20_n100}}
        \subfloat[Ring graph, $n=100$, $Q=20$, $1-\lambda = 2.6\times 10^{-3}$.]{\includegraphics[width=0.49\textwidth]{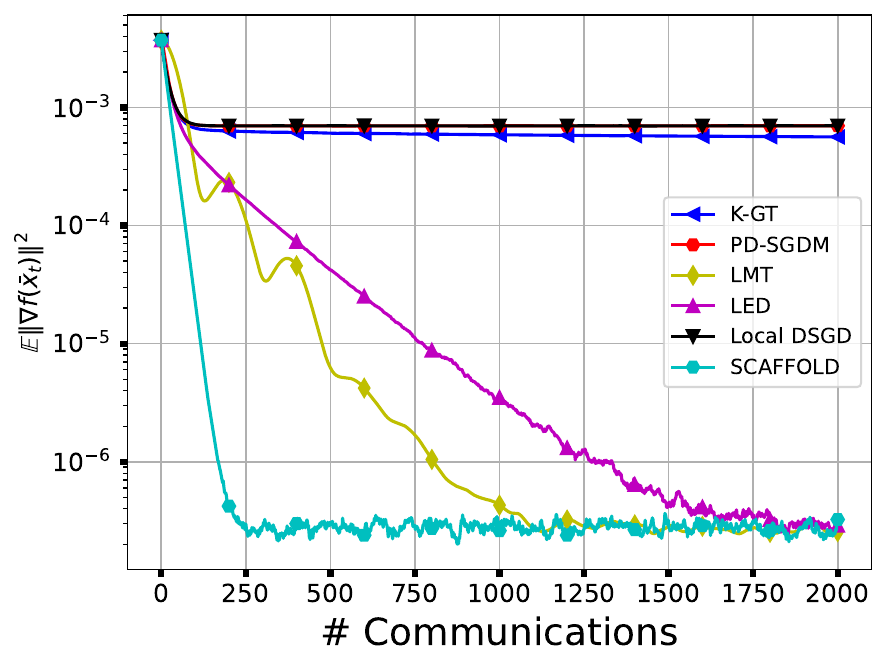}\label{fig:ncvx_logistic_Q20_n50}}
        \caption{Comparison among \DSMTL, K-GT, LED, PD-SGDM, Local DSGD, and SCAFFOLD for solving Problem \eqref{eq:ncvx_logistic} on the CIFAR-10 dataset using a constant stepsize. 
        For \DSMTL, K-GT, and SCAFFOLD, the stepsizes are set to $\eta_a = 0.5/Q$ (local updates) and $\eta_s = 1$ (outer loop). For LED, the stepsize is $\eta_a\eta_s$, and for PD-SGDM it is $\eta_a\eta_s(1-\beta)$. The momentum parameter is set to $\beta =\trw$ for \DSMTL~and PD-SGDM.}
        \label{fig:ncvx_logistic}
    \end{figure}

    The results shown in Fig.~\ref{fig:ncvx_logistic} reinforce our findings from the strongly convex case. \DSMTL~achieves the fastest convergence among all the distributed methods, especially on the large, more poorly connected network (Fig.~\ref{fig:ncvx_logistic_n100} and Fig.~\ref{fig:ncvx_logistic_Q20_n100}). With sufficiently many communication rounds, \DSMTL~and LED attain final accuracies close to that of SCAFFOLD, though LMT consistently converges faster. Increasing the number of local updates from $Q = 10$ to $Q=20$ further reduces the final error. This phenomenon corroborates the theoretical findings in \eqref{eq:ncvx_teta}, which indicates that the final error is inversely proportional to the number of local updates $Q$.

\section{Conclusion}
This paper addresses the distributed stochastic optimization problem over networks with locally updated agents. The proposed Local Momentum Tracking (\DSMTL) algorithm achieves optimal communication complexity with sufficiently many local updates and simultaneously attains the optimal iteration complexity.
Experimental results corroborate the theoretical findings. 

\newpage
\appendix

\section{Proofs}

\subsection{Proof of Lemma \ref{lem:avg}}
\label{app:avg}

The update rule for $\bar{x}_t$ in \eqref{eq:xt_avg} follows directly from the analysis in \cite[Lemma 2]{huang2025accelerated}, given the compact form \eqref{eq:txp1}. We now prove by induction that $\bar{y}_t = \bar{y}_t^{(l)} = \bar{z}_{t + 1}$ for any $t\geq 0$. 

For the base case ($t=0$), we have $\bar{y}_0 = \bar{z}_1 + \bar{c}_0 = \bar{z}_1$ from \eqref{eq:yt} and the initialization $\bc_0 = \mathbf{0}$. Similarly, we have $\bar{y}_0^{(l)} = \bar{z}_1$ from \eqref{eq:ytl} and the initialization $\bc_{-1} = \mathbf{0}$. 

Now, suppose $\bar{y}_t = \bar{y}_t^{(l)} = \bar{z}_{t + 1}$ for $t\geq 0$. 
From the update rule \eqref{eq:typ1}, we obtain
    \begin{equation}
        \label{eq:ytbar_s1}
        \begin{aligned}
            \bar{y}_{t + 1} &= (1 + \eta_w)\bar{y}_t - \eta_w\bar{y}_t^{(l)} + \bar{z}_{t + 2} - \bar{z}_{t + 1},\\
            \bar{y}_{t + 1}^{(l)} &= \bar{y}_t + \bar{z}_{t + 2} - \bar{z}_{t + 1}.
        \end{aligned}
    \end{equation}
    Using the induction hypothesis that $\bar{y}_t = \bar{y}_t^{(l)}$, we have $\bar{y}_{t + 1} = \bar{y}_t + \bar{z}_{t + 2} - \bar{z}_{t + 1} = \bar{y}_{t+1}^{(l)}$.
    Therefore, $\bar{y}_{t+1} = \bar{y}_{t + 1}^{(l)} = \bar{y}_0 + \bar{z}_{t + 2} - \bar{z}_1 = \bar{z}_{t + 2}$, where we used $\bar{y}_0 = \bar{z}_1$. 
    
    Hence, $\bar{y}_t = \bar{y}_t^{(l)} = \bar{z}_{t + 1}$ for any $t\geq 0$.

\subsection{Proof of Lemma \ref{lem:dt}}
\label{app:dt}
For $t\geq 1$, we have 
    \begin{equation}
        \label{eq:dtp1}
        \begin{aligned}
            \bar{d}_{t + 1} - \bar{d}_t &= \frac{1}{1-\beta}\brk{\bar{x}_{t + 1} - \beta\bar{x}_t - \bar{x}_t + \beta \bar{x}_{t - 1}} = \frac{1}{1-\beta}\prt{\teta \bar{z}_{t + 1} - \beta\teta \bar{z}_t} = \frac{\teta}{Q}\sum_{\ell=0}^{Q-1}\bar{g}_t^\ell,
        \end{aligned}
    \end{equation}
    where we invoked \eqref{eq:mot_dsmtl_localz} and \eqref{eq:xt_avg}.
    For $t = 0$, we have 
    \begin{equation}
        \label{eq:d0}
        \begin{aligned}
            \bar{d}_1 - \bar{d}_0 &= \frac{1}{1-\beta}\brk{\bar{x}_1 - \beta\bar{x}_0 - (1-\beta)\bar{x}_0} = \frac{\teta}{1-\beta}\bar{z}_1 = \frac{\teta}{Q}\sum_{\ell=0}^{Q-1}\bar{g}_0^\ell,
        \end{aligned}
    \end{equation}
    where we invoked $\z_0 = \mathbf{0}$.

    We next show \eqref{eq:dt_xt}. For $t\geq 1$, 
    \begin{equation}
        \label{eq:dt2xt}
        \begin{aligned}
            \bar{d}_t - \bar{x}_t = \frac{1}{1-\beta}\brk{\bar{x}_t - \beta\bar{x}_{t-1} - (1-\beta)\bar{x}_t } = -\frac{\teta\beta}{1-\beta} \bar{z}_t.
        \end{aligned}
    \end{equation}
    Since $\z_0 = \mathbf{0}$, relation \eqref{eq:dt2xt} also holds for $t = 0$.

\subsection{Proof of Lemma \ref{lem:descent}}
\label{app:descent}

Let $\bs\xi_t^\ell:= \prti{\xi_{1,t}^\ell,\xi_{2,t}^\ell,\ldots, \xi_{n,t}^\ell }^{\T}$.
We define the filtration $\crki{\cF_t^\ell}$ generated by the history of stochastic gradients up to the $(\ell-1)$-th local step of communication round $t$, as follows:
\begin{align*}
    \cF_0^0&:= \crk{\Omega, \phi},\\
    \cF_0^\ell&:=\sigma\prt{\crk{\bs\xi_0^1,\bs\xi_0^2,\ldots,\bs\xi_0^{\ell-1}}}, \ell = 1, 2, \ldots, Q-1,\\
     \cF_t^0&:=\sigma\prt{\bigcup_{k=0}^{t-1}\crk{\bs\xi_{k}^0,\bs\xi_{k}^1\ldots,\bs\xi_{k}^{Q-1}}}, t\geq 1,\\
    \cF_t^\ell&:= \sigma\prt{\brk{\bigcup_{k=0}^{t-1}\crk{\bs\xi_{k}^0,\bs\xi_{k}^1\ldots,\bs\xi_{k}^{Q-1}}}\bigcup\crk{\bs\xi_t^0,\bs\xi_t^1,\ldots,\bs\xi_t^{\ell-1}}}, t\geq 1,\ell = 1, 2, \ldots, Q-1,
\end{align*}
where $\Omega$ denotes the sample space, and $\phi$ is the empty set.

Applying the descent lemma to the update rule for $\bar{d}_t$ in \eqref{eq:dt_update} yields 
    \begin{equation}
        \label{eq:descent_s1}
        \begin{aligned}
            &\condE{f(\bar{d}_{t + 1})}{\cF_t^0} \leq f(\bar{d}_t) - \teta \condE{\inpro{\nabla f(\bar{d}_t), \frac{1}{Q}\sum_{\ell=0}^{Q-1}\bar{g}_t^\ell}}{\cF_t^0} + \frac{L}{2}\condE{\norm{\frac{\teta}{Q}\sum_{\ell=0}^{Q-1}\bar{g}_t^\ell}^2}{\cF_t^0}\\
            &= f(\bar{d}_t) -\teta\condE{\inpro{\nabla f(\bar{d}_t), \frac{1}{nQ}\sum_{\ell=0}^{Q-1}\sumn \nabla f_i(x_{i,t}^\ell)}}{\cF_t^0}\\
            &\quad + \frac{L\teta^2}{2}\condE{\norm{\frac{1}{nQ}\sum_{\ell=0}^{Q-1}\sumn\brk{g_{i,t}^\ell - \nabla f_i (x_{i,t}^s) }}^2}{\cF_t^0}\\
            &\quad + \frac{L\teta^2}{2}\condE{\norm{\frac{1}{nQ}\sum_{\ell=0}^{Q-1}\sumn\nabla f_i(x_{i,t}^\ell)}^2}{\cF_t^0},
        \end{aligned}
    \end{equation}
    where we invoked Assumption~\ref{as:abc} and the tower property:
        \begin{align*}
            &\condE{\inpro{\nabla f(\bar{d}_t), \frac{1}{Q}\sum_{\ell=0}^{Q-1}\bar{g}_t^\ell}}{\cF_t^0} = \inpro{\nabla f(\bar{d}_t), \frac{1}{Q}\sum_{\ell=0}^{Q-1}\condE{\bar{g}_t^\ell}{\cF_t^0}} \\
            &= \inpro{\nabla f(\bar{d}_t), \frac{1}{Q}\condE{\bar{g}_t^0}{\cF_t^0} + \frac{1}{Q}\sum_{s=1}^{Q-1}\condE{\condE{\bar{g}_t^s}{\cF_t^{s-1}}}{\cF_t^0}} \\
            &= \inpro{\nabla f(\bar{d}_t), \frac{1}{nQ}\sum_{\ell=0}^{Q-1}\sumn\nabla f_i(x_{i,t}^\ell)}.
        \end{align*}

    We now bound the inner product term in \eqref{eq:descent_s1}. Recalling that $\bavg_t = \sum_{\ell=0}^{Q-1}\sumn \nabla f_i(x_{i,t}^\ell)/(nQ)$, we obtain 
        \begin{align}
            &-\teta\condE{\inpro{\nabla f(\bar{d}_t) , \bavg_t}}{\cF_t^0}=-\teta\condE{\inpro{\nabla f(\bar{d}_t) - \nabla f(\bar{x}_t) + \nabla f(\bar{x}_t), \bavg_t}}{\cF_t^0} \nonumber\\
            &\leq \teta\norm{\nabla f(\bar{d}_t) - \nabla f(\bar{x}_t)}^2   + \frac{\teta}{4}\condE{\norm{\bavg_t}^2}{\cF_t^0} - \frac{\teta}{2}\norm{\nabla f(\bar{x}_t)}^2 - \frac{\teta}{2}\condE{\norm{\bavg_t}^2}{\cF_t^0}\nonumber \\
            &\quad + \frac{\teta}{2}\condE{\norm{\bavg_t - \nabla f(\bar{x}_t)}^2}{\cF_t^0}\nonumber\\
            &\leq \frac{\teta^3L^2\beta^2}{(1-\beta)^2}\norm{\bar{z}_t}^2 - \frac{\teta}{2}\norm{\nabla f(\bar{x}_t)}^2 - \frac{\teta}{4} \condE{\norm{\bavg_t}^2}{\cF_t^0}\nonumber \\
            &\quad + \frac{\teta L^2}{2nQ}\sum_{\ell=0}^{Q-1}\sumn\condE{\norm{x_{i,t}^\ell - \bar{x}_t}^2}{\cF_t^0}.\label{eq:descent_s2}
        \end{align}


    To bound the third term on the right-hand side of \eqref{eq:descent_s1}, we define the stochastic gradient errors as $\Delta_{i,t}^s:= g_{i,t}^s - \nabla f_i(x_{i,t}^s)$ and $\bar{\Delta}_t^s:= \bar{g}_t^s - \sumn \nabla f_i(x_{i,t}^s)/n$. We have
    \begin{equation}
        \label{eq:avg_var}
        \begin{aligned}
            \condE{\norm{\sum_{\ell=0}^{Q-1}\bar{\Delta}_t^\ell}^2}{\cF_t^0}&= \sum_{\ell=0}^{Q-1}\condE{\norm{\bar{\Delta}_t^\ell}^2}{\cF_t^0} - 2\sum_{0\leq p<q\leq Q-1} \condE{\inpro{\bar{\Delta}_t^p, \bar{\Delta}_t^q}}{\cF_t^0}\\
            &= \sum_{\ell=0}^{Q-1}\condE{\norm{\bar{\Delta}_t^\ell}^2}{\cF_t^0} \leq \frac{Q\sigma^2}{n},
        \end{aligned}
    \end{equation}
    where the last equality and the last inequality follow from Assumption~\ref{as:abc}, which implies that 
    \begin{equation}
        \label{eq:cond_unb}
        \begin{aligned}
            \condE{\inpro{\bar{\Delta}_t^p, \bar{\Delta}_t^q}}{\cF_t^0} &= \condE{\condE{\inpro{\bar{\Delta}_t^p, \bar{\Delta}_t^q}}{\cF_t^q}}{\cF_t^0} = \condE{\inpro{\bar{\Delta}_t^p, \condE{\bar{\Delta}_t^q}{\cF_t^q}}}{\cF_t^0} = 0,
        \end{aligned}
    \end{equation}
    for any $1\leq p<q\leq Q-1$, and
    \begin{equation}
        \label{eq:condE_ij}
        \begin{aligned}
            &\condE{\inpro{\Delta_{i,t}^s, \Delta_{j,t}^s}}{\cF_t^0}=\condE{\condE{\inpro{\Delta_{i,t}^s, \Delta_{j,t}^s}}{\cF_t^s}}{\cF_t^0}\\
            &= \condE{\inpro{\condE{\Delta_{i,t}^s}{\cF_t^s}, \condE{\Delta_{j,t}^s}{\cF_t^s}}}{\cF_t^0} = 0,
        \end{aligned}
    \end{equation}
    for any $0\leq s\leq Q-1$.

    Substituting the bounds from \eqref{eq:descent_s2} and \eqref{eq:avg_var} into \eqref{eq:descent_s1}, and then taking the full expectation, yields the desired result \eqref{eq:descent}.

\subsection{Proof of Lemma \ref{lem:barz}}
\label{app:barz}
    From \eqref{eq:mot_dsmtl_localz}, we obtain
    \begin{equation}
        \label{eq:barz_s1}
        \begin{aligned}
            \bar{z}_{t + 1} &= \beta \bar{z}_t + \frac{1-\beta}{Q}\sum_{\ell=0}^{Q-1}\brk{\bar{\Delta}_t^\ell + \frac{1}{n}\sumn\nabla f_i(x_{i,t}^s)}.
        \end{aligned}
    \end{equation}
    Note that 
    \begin{equation}
        \label{eq:barz_unbiased}
        \begin{aligned}
            \condE{\inpro{\bar{z}_t, \sum_{\ell=0}^{Q-1}\bar{\Delta}_t^\ell}}{\cF_t^0} = 0.
        \end{aligned}
    \end{equation}
    Applying Young's inequality, we have
    \begin{equation}
        \label{eq:barz_s2}
        \begin{aligned}
            \condE{\norm{\bar{z}_{t + 1}}^2}{\cF_t^0} &\leq \beta \norm{\bar{z}_t}^2 + 2\condE{\norm{\frac{1-\beta}{Q}\sum_{\ell=0}^{Q-1}\bar{\Delta}_t^\ell}^2}{\cF_t^0}+ 3(1-\beta)\condE{\norm{\bavg_t}^2}{\cF_t^0}.
        \end{aligned}
    \end{equation}
    Invoking \eqref{eq:avg_var} and taking the full expectation on both sides of \eqref{eq:barz_s2} yields the desired result.

\subsection{Proof of Lemma \ref{lem:tPitx}}
\label{app:tPitx}
    From \eqref{eq:txp1} and the relation $\tPi\tW = \tPi\tW\tPi$, we have
    \begin{equation}
        \label{eq:tPix_s1}
        \begin{aligned}
            &\tPi\tx_{t + 1} = \tPi\tW\tPi \brk{\tx_t - \teta\prt{\y_t}_{\#}}.
        \end{aligned}
    \end{equation}
    Then, applying Lemma~\ref{lem:lca} and Young's inequality yields
    \begin{equation}
        \label{eq:tPix_s2}
        \begin{aligned}
            &\condE{\norm{\tPi\x_{t+1}}^2}{\cF_t^0} \leq \frac{1}{\trw}\norm{\tPi\tW\tPi\tx_t}^2 + \frac{c_0\trw^2\teta^2}{1-\trw}\norm{\Pi\y_t}^2.
        \end{aligned}
    \end{equation}
    After taking the full expectation on both sides of \eqref{eq:tPix_s2}, we get
    \begin{equation}
        \label{eq:tPix_s2E}
        \begin{aligned}
            \E\brk{\norm{\tPi\x_{t+1}}^2} \leq \frac{1}{\trw}\E\brk{\norm{\tPi\tW\tPi\tx_t}^2} + \frac{c_0\trw^2\teta^2}{1-\trw}\E\brk{\norm{\Pi\y_t}^2}.
        \end{aligned}
    \end{equation}
    For the term $\frac{1}{\trw}\E\brk{\norm{\tPi\tW\tPi\tx_t}^2}$, we have by a similar derivation:
    \begin{equation}
        \label{eq:tPix_s3E}
        \begin{aligned}
            &\frac{1}{\trw}\E\brk{\norm{\tPi\tW\tPi\tx_t}^2} \leq \frac{1}{\trw^2} \E\brk{\norm{\tPi\tW^2\tPi\tx_{t-1}}^2} + \frac{c_0\trw^3 \teta^2}{1-\trw}\E\brk{\norm{\Pi\y_t}^2}.
        \end{aligned}
    \end{equation}
    Repeating the above procedure in \eqref{eq:tPix_s3E} and invoking Lemma~\ref{lem:lca} leads to 
    \begin{equation}
        \label{eq:tPix_E}
        \begin{aligned}
            &\E\brk{\norm{\tPi\tx_t}^2} \leq c_0\trw^t\norm{\Pi\x_0}^2  + \frac{c_0\trw^2\teta^2}{1-\trw}\sum_{s=0}^{t-1}\trw^{t-1-s}\E\brk{\norm{\Pi\y_s}^2} =\cR_t^x,
        \end{aligned}
    \end{equation}
    where the equality follows from \eqref{eq:cRx}.

    Finally, the relation $\E\brki{\normi{\Pi\x_t}^2}\leq \cR_t^x$ in \eqref{eq:Pix2cRx} follows from \eqref{eq:tPix_E} together with $\normi{\Pi\x_t}^2\leq \normi{\tPi\tx_t}^2$ for all $t\geq 0$.

\subsection{Proof of Lemma \ref{lem:tPi_cRy}}
\label{app:tPi_cRy}

    In light of \eqref{eq:mot_dsmtl_localz}, we obtain
    \begin{equation}
        \label{eq:ztp2}
        \begin{aligned}
            &\z_{t + 2} - \z_{t + 1} = -(1-\beta)\z_{t + 1} + \frac{1-\beta}{Q}\sum_{\ell=0}^{Q-1}\g_{t+1}^\ell\\
            &= -(1-\beta)\brk{\z_{t + 1} - \nabla F(\1\bar{x}_{t + 1}^{\T})} + \frac{1-\beta}{Q}\sum_{\ell=0}^{Q-1}\brk{\Delta_{t + 1}^\ell + \nabla F(\x_{t + 1}^\ell) -  \nabla F(\1\bar{x}_{t + 1}^{\T}) }.
        \end{aligned}
    \end{equation}
    Combining \eqref{eq:typ1}, \eqref{eq:ztp2}, and the relation $\tPi\tW=\tPi\tW\tPi$, we have
    \begin{equation}
        \label{eq:tPiy_s1}
        \begin{aligned}
            \tPi\ty_{t + 1} &= \tPi\tW\tPi\ty_t + \tPi\brk{\z_{t + 2} - \z_{t + 1}}_{\#}\\
            &= \tPi\tW\tPi\ty_t -(1-\beta)\tPi\brk{\z_{t + 1} - \nabla F(\1\bar{x}_{t + 1}^{\T})}_{\#} \\
            &+ \frac{1-\beta}{Q}\sum_{\ell=0}^{Q-1}\tPi\brk{\bs{\Delta}_{t + 1}^\ell + \nabla F(\x_{t  + 1}^\ell) - \nabla F(\1\bar{x}_{t + 1}^{\T})}_{\#}.
        \end{aligned}
    \end{equation}
    Noting that
    \begin{equation}
        \label{eq:ty_inner}
        \begin{aligned}
            \condE{\inpro{\tPi\tW\tPi\ty_t, \sum_{\ell=0}^{Q-1}\tPi\brk{\bs{\Delta}_{t + 1}^\ell}_{\#}}}{\cF_{t+1}^0} = 0,
        \end{aligned}
    \end{equation}
    we have from Young's inequality that 
    \begin{equation}
        \label{eq:tPiy_s2}
        \begin{aligned}
            &\condE{\norm{\tPi\ty_{t + 1}}^2}{\cF_{t + 1}^0} \leq \frac{1}{\trw}\norm{\tPi\tW\tPi\ty_t}^2 + \frac{5(1-\beta)^2}{Q^2}\condE{\norm{\sum_{\ell=0}^{Q-1}\bs{\Delta}_{t + 1}^\ell}^2}{\cF_{t + 1}^0}\\
            &\quad + \frac{10(1-\beta)^2}{1-\trw}\norm{\z_{t + 1} - \nabla F(\1\bar{x}_{t + 1}^{\T})}^2 \\
            &\quad + \frac{10(1-\beta)^2 L^2}{Q(1-\trw)}\sum_{\ell=0}^{Q-1}\condE{\norm{\x_{t + 1}^\ell - \1\bar{x}_{t + 1}^{\T}}^2}{\cF_{t + 1}^0}\\
            &\leq \frac{1}{\trw}\norm{\tPi\tW\tPi\ty_t}^2 + \frac{5c_0(1-\beta)^2 n\sigma^2}{Q}+ \frac{5c_0(1-\beta)^2}{1-\trw}\norm{\z_{t + 1} - \nabla F(\1\bar{x}_{t + 1}^{\T})}^2\\
            &\quad + \frac{5c_0(1-\beta)^2 L^2}{Q(1-\trw)}\sum_{\ell=0}^{Q-1}\condE{\norm{\x_{t + 1}^\ell - \1\bar{x}_{t + 1}^{\T}}^2}{\cF_{t + 1}^0},
        \end{aligned}
    \end{equation}
    where we invoked the fact that
    \begin{equation}
        \norm{\tPi A_{\#}}^2 = 2\norm{\Pi A}^2,\text{ and }c_0 = 14.
    \end{equation}
    For the term $\frac{1}{\trw}\norm{\tPi\tW\tPi\ty_t}^2$, in light of Lemma~\ref{lem:lca}, we obtain
        \begin{align*}
           &\frac{1}{\trw} \E\brk{\norm{\tPi\tW\tPi\ty_t}^2}\leq \frac{1}{\trw^2}\E\brk{\norm{\tPi\tW^2\tPi\ty_{t-1}}^2} + \frac{5c_0\trw (1-\beta)^2n\sigma^2}{Q}
           \\
           &\quad + \frac{5c_0\trw(1-\beta)^2}{1-\trw}\E\brk{\norm{\z_{t} - \nabla F(\1\bar{x}_{t}^{\T})}^2} + \frac{5c_0\trw(1-\beta)^2 L^2}{Q(1-\trw)}\sum_{\ell=0}^{Q-1}\E\brk{\norm{\x_{t}^\ell - \1\bar{x}_{t}^{\T}}^2}.
        \end{align*}
    Repeating the above procedures and invoking Lemma \ref{lem:lca} leads to 
    
        \begin{align}
            \E\brk{\norm{\tPi\ty_t}^2} &\leq c_0\trw^t\E\brk{\norm{\Pi\y_0}^2} + \frac{5c_0(1-\beta)^2n\sigma^2}{Q}\sum_{s=0}^{t-1}\trw^{t-1-s} \nonumber \\
            &\quad + \frac{5c_0(1-\beta)^2}{1-\trw}\sum_{s=1}^{t}\trw^{t-s}\E\brk{\norm{\z_{s} - \nabla F(\1\bar{x}_{s}^{\T})}^2}\nonumber\\
            &\quad + \frac{5c_0(1-\beta)^2 L^2}{Q(1-\trw)}\sum_{s=1}^{t}\trw^{t-s}\sum_{\ell=0}^{Q-1}\E\brk{\norm{\x_{s}^\ell - \1\bar{x}_{s}^{\T}}^2}\nonumber \\
            &=\cR_t^y,\; t\geq 1, \label{eq:tPiy_s4}
        \end{align}
    where the equality follows from \eqref{eq:cRy}.
    
    Finally, the relation $\E\brki{\normi{\Pi\y_t}^2}\leq \cR_t^y$ in \eqref{eq:Piy2cRy} follows from \eqref{eq:tPiy_s4} together with $\normi{\Pi\y_t}^2\leq \normi{\tPi\ty_t}^2$ for all $t\geq 0$.

\subsection{Proof of Lemma \ref{lem:ztf}}
\label{app:ztf}
    From \eqref{eq:mot_dsmtl_localz}, we obtain
    \begin{equation}
        \label{eq:ztf_s1}
        \begin{aligned}
            \z_{t + 1} - \nabla F(\1\bar{x}_{t + 1}^{\T})&= \beta\brk{\z_t - \nabla F(\1\bar{x}_t^{\T})} + \frac{1-\beta}{Q}\sum_{\ell=0}^{Q-1}\brk{\boldsymbol{\Delta}_t^\ell + \nabla F(\x_t^\ell) - \nabla F(\1\bar{x}_t^{\T})}\\
            &\quad \nabla F(\1\bar{x}_t^{\T}) - \nabla F(\1\bar{x}_{t + 1}^{\T}).
        \end{aligned}
    \end{equation}
    Consequently,
    \begin{equation}
        \label{eq:ztf_s2}
        \begin{aligned}
            &\condE{\norm{\z_{t + 1} - \nabla F(\1\bar{x}_{t + 1}^{\T})}^2}{\cF_t^0}\leq \beta\norm{\z_t - \nabla F(\1\bar{x}_t^{\T})}^2 + \frac{3(1-\beta)^2n\sigma^2}{Q}\\
            &\quad + \frac{5(1-\beta)^2L^2}{Q(1-\beta)}\sum_{\ell=0}^{Q-1}\condE{\norm{\x_t^\ell-\1\bar{x}_t^{\T}}^2}{\cF_t^0} + \frac{5nL^2}{1-\beta}\condE{\norm{\bar{x}_{t + 1} - \bar{x}_t}^2}{\cF_t^0}.
        \end{aligned}
    \end{equation}
    
    We now bound the term $\normi{\bar{x}_{t + 1} - \bar{x}_t}^2$. By Lemma~\ref{lem:avg}, we have
    \begin{equation}
        \label{eq:xt_diff}
        \begin{aligned}
            \bar{x}_{t + 1} - \bar{x}_t &= -\teta\beta\bar{z}_t - \frac{\teta(1-\beta)}{Q}\sum_{\ell=0}^{Q-1}\bar{g}_t^\ell\\
            &= -\teta\beta\bar{z}_t - \frac{\teta(1-\beta)}{Q}\sum_{\ell=0}^{Q-1}\brk{\bar{\Delta}_t^\ell + \frac{1}{n}\sumn \nabla f_i(x_{i,t}^\ell)}.
        \end{aligned}
    \end{equation} 
    Taking the squared norm and conditional expectation on both sides of \eqref{eq:xt_diff} yields
    \begin{equation}
        \label{eq:xt_diff_var}
        \begin{aligned}
            \condE{\norm{\bar{x}_{t + 1} - \bar{x}_t}^2}{\cF_t^0}&\leq 3\teta^2\beta^2\norm{\bar{z}_t}^2 + \frac{3\teta^2 (1-\beta)^2\sigma^2}{nQ} + 3\teta^2(1-\beta)^2\condE{\norm{\bavg_t}^2}{\cF_t^0}.
        \end{aligned}
    \end{equation}
    Substituting \eqref{eq:sum_ub} and \eqref{eq:xt_diff_var} into \eqref{eq:ztf_s2}, and taking the full expectation on both sides, leads to the desired result.

\subsection{Proof of Lemma \ref{lem:tcL}}
\label{app:tcL}
    In light of \eqref{eq:Pix2cRx} and \eqref{eq:Piy2cRy}, we substitute \eqref{eq:sum_ub} into \eqref{eq:descent} and take the full expectation to obtain
    \begin{equation}
        \label{eq:descent1}
        \begin{aligned}
            &\E\brk{f(\bar{d}_{t + 1})} \leq \E\brk{f(\bar{d}_t)} - \frac{\teta}{2}\prt{1-3\eta_a^2Q^2L^2}\E\brk{\norm{\nabla f(\bar{x}_t)}^2} - \frac{\teta}{4}(1-2\teta L)\E\brk{\norm{\bavg_t}^2}\\
            &\quad + \frac{\teta^3\beta^2 L^2}{(1-\beta)^2}\E\brk{\norm{\bar{z}_t}^2} + \frac{9\teta L^2}{2n}\cR_t^x + \frac{3\teta \eta_a^2Q^2L^2}{2n}\cR_t^{y} \\
            &\quad + \frac{3\teta\eta_a^2 Q^2L^2\beta^2}{2n}\E\brk{\norm{\z_t - \nabla F(\1\bar{x}_t^{\T})}^2} + \frac{\teta^2 L\sigma^2}{2nQ} + \frac{3\teta\eta_a^2QL^2\sigma^2}{2}.
        \end{aligned}
    \end{equation}
    Substituting \eqref{eq:sum_ub} into \eqref{eq:cRy} leads to 
        \begin{align*}
            &\brk{1 - \frac{15c_0(1-\beta)^2\eta_a^2Q^2 L^2}{1-\trw}}\cR_{t + 1}^y \leq \trw \cR_t^y + \frac{5c_0(1-\beta)^2n\sigma^2}{Q}\prt{1 + \frac{3\eta_a^2Q^2L^2}{1-\trw}} \\
            &\quad + \frac{45 c_0(1-\beta)^2L^2}{(1-\trw)}\cR_{t + 1}^x   + \frac{5c_0(1-\beta)^2}{1-\trw}\prt{1 + 3\eta_a^2Q^2L^2\beta^2}\E\brk{\norm{\z_{t + 1} - \nabla F(\1\bar{x}_{t + 1}^{\T})}^2} \\
            &\quad  + \frac{15n\eta_a^2Q^2 c_0(1-\beta)^2L^2}{1-\trw}\E\brk{\norm{\nabla f(\bar{x}_{t + 1})}^2}\\
            &\leq \trw \cR_t^y + \frac{6c_0(1-\beta)n\sigma^2}{Q} + \frac{45 c_0(1-\beta)^2L^2}{(1-\trw)}\cR_{t + 1}^x \\ 
            &\quad  + \frac{6c_0(1-\beta)^2}{1-\trw}\E\brk{\norm{\z_{t + 1} - \nabla F(\1\bar{x}_{t + 1}^{\T})}^2}  + \frac{15n\eta_a^2Q^2 c_0(1-\beta)^2L^2}{1-\trw}\E\brk{\norm{\nabla f(\bar{x}_{t + 1})}^2}\\
            &\leq \prt{\trw + \frac{45c_0^2\trw^2\teta^2(1-\beta)^2L^2}{(1-\trw)^2} + \frac{90c_0\eta_a^2Q^2L^2(1-\beta)^3}{1-\trw}}\cR_t^y \\
            &\quad + \frac{45c_0(1-\beta)^2 L^2}{1-\trw}\brk{1 + 6(1-\beta)}\cR_t^x + \frac{6c_0(1-\beta)^2}{1-\trw}\E\brk{\norm{\z_t - \nabla F(\1\bar{x}_t^{\T})}^2}\\
            &\quad + \frac{90c_0\teta^2 L^2 (1-\beta)n}{1-\trw}\brk{\eta_a^2Q^2L^2(1-\beta) + 1}\E\brk{\norm{\bar{z}_t}^2}\\
            &\quad  + \frac{30c_0n\eta_a^2Q^2L^2(1-\beta)^2}{1-\trw}\brk{3(1-\beta) + 1}\E\brk{\norm{\nabla f(\bar{x}_t)}^2} \\
            &\quad + \frac{90c_0 n \teta^2 L^2(1-\beta)^3}{1-\trw}\prt{1 + \eta_a^2 Q^2L^2}\E\brk{\norm{\bavg_t}^2} \\
            &\quad +  \frac{6c_0(1-\beta)n\sigma^2}{Q}\brk{1 + \frac{4(1-\beta)^2}{1-\trw} + \frac{15\teta^2\eta_a^2 Q^2L^4(1-\beta)^2}{n(1-\trw)}}.
        \end{align*}
    We let 
        \begin{align*}
            \eta_a&\leq \min\crk{\frac{1-\trw}{6\sqrt{10c_0}(1-\beta) QL}, \frac{1-\trw}{3(1-\beta)\sqrt{10c_0}QL}},\\
            \teta&\leq \min\crk{\frac{(1-\trw)^\frac{3}{2}}{18\sqrt{5}c_0(1-\beta) L}, \frac{1-\beta}{2\sqrt{15}c_0 L}},\; \beta\geq \trw.
        \end{align*}
    Then,
    \begin{align*}
        & 1 - \frac{15c_0(1-\beta)^2\eta_a^2Q^2 L^2}{1-\trw}\geq \frac{3(1+\trw)}{2(2 + \trw)},\\
        &\frac{45c_0^2\trw^2\teta^2(1-\beta)^2L^2}{(1-\trw)^2} + \frac{90c_0\eta_a^2Q^2L^2(1-\beta)^3}{1-\trw}\leq \frac{1-\trw}{2}.
    \end{align*} 
    It follows that
    \begin{equation}
        \label{eq:cRy_sim}
        \begin{aligned}
            &\cR_{t + 1}^y \leq \frac{2+\trw}{3}\cR_t^y +  \frac{8c_0(1-\beta)^2}{1-\trw}\E\brk{\norm{\z_t - \nabla F(\1\bar{x}_t^{\T})}^2} + \frac{420c_0(1-\beta)^2L^2}{1-\trw}\cR_t^{x}\\
            &\quad  + \frac{123c_0\teta^2L^2n (1-\beta)}{1-\trw} \E\brk{\norm{\bar{z}_t}^2} + \frac{48c_0(1-\beta)n\sigma^2}{Q}\\
            &\quad + \frac{160c_0n\eta_a^2Q^2L^2(1-\beta)^2}{1-\trw} \E\brk{\norm{\nabla f(\bar{x}_t)}^2}+ \frac{123c_0n\teta^2L^2(1-\beta)^3}{1-\trw} \E\brk{\norm{\bavg_t}^2}.
        \end{aligned}
    \end{equation}

    To illustrate the derivation of $\tcL_t$, we first define $\tcL_t$ that includes undetermined positive coefficients $\cC_1$-$\cC_4$:
    \begin{equation}
    \label{eq:tcL_can}
    \begin{aligned}
        \tcL_t&:= \E\brk{f(\bar{d}_t)} - f^* + \frac{\teta^3 L^2\cC_1}{(1-\beta)^3} \E\brk{\norm{\bar{z}_t}^2} + \frac{\teta L^2\cC_2}{n(1-\trw)}\cR_t^x \\
        &\quad \frac{\teta \eta_a^2Q^2 L^2\cC_3}{n(1-\trw)}\cR_t^{y} + \frac{\teta \eta_a^2 Q^2 L^2\cC_4 }{n(1-\beta)}\E\brk{\norm{\z_t - \nabla F(\1\bar{x}_t^{\T})}^2}.
    \end{aligned}
    \end{equation}
    According to \eqref{eq:barz}, \eqref{eq:cRx}, \eqref{eq:ztf}, \eqref{eq:descent1}, \eqref{eq:cRy_sim}, and \eqref{eq:tcL_can}, we have 
    \small
        \begin{align}
            &\tcL_{t + 1} \leq \E\brk{f(\bar{d}_t)} - f^*  \nonumber \\
            &\quad + \brk{(1-\beta) + \beta\cC_1 + \frac{123c_0\eta_a^2 Q^2 L^2(1-\beta)^4\cC_3}{(1-\trw)^2} + 15\eta_a^2Q^2L^2(1-\beta)\cC_4}\frac{\teta^3L^2}{(1-\beta)^3}\E\brk{\norm{\bar{z}_t}^2}\nonumber\\
            &\quad + \brk{\frac{9(1-\trw)}{2} + \trw \cC_2 + \frac{420c_0\eta_a^2 Q^2 L^2(1-\beta)^2\cC_3}{1-\trw} + 45\eta_a^2Q^2L^2(1-\trw)\cC_4} \frac{\teta L^2}{n(1-\trw)}\cR_t^x \nonumber\\
            &\quad + \brk{\frac{3(1-\trw)}{2} + \frac{\eta_s^2c_0\cC_2}{1-\trw} + \frac{2 + \trw}{3}\cC_3 + 15\eta_a^2Q^2L^2(1-\trw)\cC_4}\frac{\teta\eta_a^2 Q^2 L^2}{n(1-\trw)}\cR_t^y\nonumber\\
            &\quad + \brk{\frac{3(1-\beta)}{2} +  \frac{8c_0(1-\beta)^3\cC_3}{(1-\trw)^2} + \frac{1+\beta}{2}\cC_4}\frac{\teta\eta_a^2Q^2L^2}{n(1-\beta)}\E\brk{\norm{\z_t - \nabla F(\1\bar{x}_t^{\T})}^2}\nonumber\\
            &\quad - \frac{\teta}{2}\brk{1 - 3\eta_a^2Q^2L^2 - \frac{320c_0\eta_a^4Q^4L^4(1-\beta)^2\cC_3}{(1-\trw)^2} - 30\eta_a^4Q^4L^4\cC_4}\E\brk{\norm{\nabla f(\bar{x}_t)}^2}\nonumber\\
            &\quad - \frac{\teta}{4}\brk{1-2\teta L - \frac{12\teta^2 L^2\cC_1}{(1-\beta)^2} - \frac{492c_0\teta^2\eta_a^2Q^2L^4(1-\beta)^3\cC_3}{(1-\trw)^2} - 60\teta^2\eta_a^2Q^2L^2\cC_4}\E\brk{\norm{\bavg_t}^2}\nonumber\\
            &\quad  + \frac{3\teta\eta_a^2QL^2\sigma^2}{2}\brk{1  + \frac{32c_0(1-\beta)\cC_3}{1-\trw} + 2(1-\beta)\cC_4\prt{1 + \frac{5\eta_a^2Q^2L^2}{1-\beta} + \frac{5\teta^2L^2}{n(1-\beta)}}}\nonumber\\
            &\quad + \frac{\teta^2L\sigma^2}{2nQ} + \frac{2\teta^3L^2\sigma^2\cC_1}{nQ(1-\beta)}. \label{eq:tcL_s1}
        \end{align}
    \normalsize

    We choose $\cC_1$-$\cC_4$ such that the following conditions hold:
    \begin{subequations}
        \label{eq:cCs}
        \begin{align}
            1-\beta + \beta\cC_1 + \frac{123c_0\eta_a^2 Q^2 L^2(1-\beta)^4\cC_3}{(1-\trw)^2} + 15\eta_a^2Q^2L^2(1-\beta)\cC_4&\leq\cC_1,\label{eq:cC1}\\
            \frac{9(1-\trw)}{2} + \trw \cC_2 + \frac{420c_0\eta_a^2 Q^2 L^2(1-\beta)^2\cC_3}{1-\trw} + 45\eta_a^2Q^2L^2(1-\trw)\cC_4&\leq \cC_2,\label{eq:cC2}\\
            \frac{3(1-\trw)}{2} + \frac{\eta_s^2c_0\cC_2}{1-\trw} + \frac{2 + \trw}{3}\cC_3 + 15\eta_a^2Q^2L^2(1-\trw)\cC_4  &\leq \cC_3,\label{eq:cC3}\\
            \frac{3(1-\beta)}{2} +  \frac{8c_0(1-\beta)^3\cC_3}{(1-\trw)^2} + \frac{1+\beta}{2}\cC_4 &\leq \cC_4.\label{eq:cC4}
        \end{align}
    \end{subequations}
    From \eqref{eq:cC4}, we set $\cC_4 = 6 + 18c_0\cC_3$. Substituting $\cC_4 = 6 + 18c_0\cC_3$ into \eqref{eq:cC3} yields
    \begin{align}
        \label{eq:cC43}
        \cC_3 \geq \frac{\frac{9}{2} + \frac{3\eta_s^2 c_0\cC_2}{(1-\trw)^2} + 270\eta_a^2Q^2L^2}{1 - 810\eta_a^2Q^2L^2c_0}.
    \end{align}
    Given that $\eta_a\leq 1/(42\sqrt{c_0}QL)$ and $\eta_s\leq (1-\trw)/(\sqrt{6c_0})$, we choose $\cC_3 = 10 + \cC_2$ to satisfy \eqref{eq:cC43}. Substituting the expressions for $\cC_3$-$\cC_4$ and the relation $\beta\geq \trw$ into \eqref{eq:cC2} results in
    \begin{equation}
        \label{eq:cC12}
        \begin{aligned}
            \cC_2 &\geq \frac{\frac{9}{2} + 4200c_0\eta_a^2Q^2L^2 + 270(1+30c_0)\eta_a^2Q^2L^2}{1-420c_0\eta_a^2Q^2L^2 - 810c_0\eta_a^2Q^2L^2}.
        \end{aligned}
    \end{equation}
    By letting
    \begin{align*}
        \eta_a\leq \min\crk{\frac{1}{84\sqrt{c_0}QL}, \frac{1}{15\sqrt{2(1+30c_0)}QL}},\; \beta\geq \trw,
    \end{align*}
    we set $\cC_2 = 11$ to satisfy \eqref{eq:cC12}. Thus, we obtain $\cC_3 = 10+\cC_2 = 21$ and $\cC_4 = 6(1+63c_0)$. Substituting the expressions for $\cC_2$-$\cC_4$ into \eqref{eq:cC1} yields 
    \begin{equation}
        \label{eq:cC15}
        \begin{aligned}
            \cC_1\geq 1 + 2583c_0\eta_a^2Q^2L^2 + 90(1+63c_0)\eta_a^2Q^2L^2.
        \end{aligned}
    \end{equation}
    It suffices to let 
    \begin{align}
        \label{eq:step_cC1}
        \eta_a\leq \frac{1}{15QL\sqrt{2(1+63c_0)}},\;\beta\geq \trw,
    \end{align}
    and $\cC_1 = 4$ for \eqref{eq:cC15} to hold. 

    In light of the above results, relation \eqref{eq:tcL_s1} simplifies to
    \begin{equation}
        \label{eq:tcL_s2}
        \begin{aligned}
            \tcL_{t + 1} 
            &\leq \tcL_t - \frac{\teta}{3}\E\brk{\norm{\nabla f(\bar{x}_t)}^2} - \frac{\teta}{5}\E\brk{\norm{\bavg_t}^2} + \frac{\teta^2L\sigma^2}{2nQ} +  \frac{33\teta^3L^2\sigma^2}{nQ(1-\beta)}\\
            &\quad +43(1+63c_0)\teta\eta_a^2QL^2\sigma^2,
        \end{aligned}
    \end{equation}
    where we invoked condition \eqref{eq:step_cC1} and let $\teta\leq (1-\beta)/(18\sqrt{5}c_0 L )$. This completes the proof.

\subsection{Proof of Lemma \ref{lem:tcL_pl}}
\label{app:tcL_pl}

    To utilize the PL condition in \eqref{eq:PL}, we construct recursions for $\E\brki{f(\bar{d}_{t + 1})}$ and $\E\brki{\normi{\nabla f(\bar{d}_t)}^2}$ similar to \eqref{eq:descent}. 
    To obtain the term $-\normi{\nabla f(\bar{d}_t)}^2$ in the recursion of $\E\brki{f(\bar{d}_t)}$, we bound the inner product in \eqref{eq:descent_s1} with derivations similar to \eqref{eq:descent_s2}:
    \begin{equation}
        \label{eq:descent_s2_nd}
        \begin{aligned}
            &-\teta\condE{\inpro{\nabla f(\bar{d}_t), \bavg_t}}{\cF_t^0}= -\frac{\teta}{2}\norm{\nabla f(\bar{d}_t)}^2 -\frac{\teta}{2}\condE{\norm{\bavg_t}^2}{\cF_t^0} \\
            &\quad + \frac{\teta}{2}\condE{\norm{\nabla f(\bar{d}_t) - \bavg_t}^2}{\cF_t^0}\\
            &\leq -\frac{\teta}{2}\norm{\nabla f(\bar{d}_t)}^2 -\frac{\teta}{2}\condE{\norm{\bavg_t}^2}{\cF_t^0} + \frac{\teta^3\beta^2 L^2}{(1-\beta)^2}\norm{\bar{z}_t}^2 \\
            &\quad + \frac{\teta L^2}{nQ}\sum_{\ell=0}^{Q-1}\sumn\condE{\norm{x_{i,t}^\ell - \bar{x}_t}^2}{\cF_t^0},
        \end{aligned}
    \end{equation}
    where we invoked Assumption~\ref{as:smooth} and the relation in \eqref{eq:dt_xt}. 
    Then, substituting \eqref{eq:avg_var} and \eqref{eq:descent_s2_nd} into \eqref{eq:descent_s1} yields
    \begin{equation}
        \label{eq:descent_nd}
        \begin{aligned}
            \E\brk{f(\bar{d}_{t + 1})} &\leq \E\brk{f(\bar{d}_t)} - \frac{\teta}{2}\E\brk{\norm{\nabla f(\bar{d}_t)}^2} - \frac{\teta}{2}\prt{1 - \teta L}\E\brk{\norm{\bavg_t}^2}\\
            &\quad + \frac{\teta^3\beta^2 L^2}{(1-\beta)^2}\E\brk{\norm{\bar{z}_t}^2}  + \frac{\teta L^2}{nQ}\sum_{\ell=0}^{Q-1}\sumn\E\brk{\norm{x_{i,t}^\ell - \bar{x}_t}^2}+ \frac{\teta^2 L\sigma^2}{2nQ}.
        \end{aligned}
    \end{equation}

    From \eqref{eq:dt_xt}, we obtain
    \begin{equation}
        \label{eq:nfx2nfd}
        \begin{aligned}
            \norm{\nabla f(\bar{x}_t)}^2 \leq \frac{2\teta^2 \beta^2L^2}{(1-\beta)^2}\norm{\bar{z}_t}^2 + 2\norm{\nabla f(\bar{d}_t)}^2.
        \end{aligned}
    \end{equation}
    Substituting \eqref{eq:sum_ub} into \eqref{eq:descent_nd} and invoking \eqref{eq:nfx2nfd} leads to 
        \begin{align*}
            &\E\brk{f(\bar{d}_{t + 1})} \leq \E\brk{f(\bar{d}_t)} - \frac{\teta}{2}\prt{1-12\eta_a^2Q^2L^2}\E\brk{\norm{\nabla f(\bar{d}_t)}^2} - \frac{\teta}{2}(1-\teta L)\E\brk{\norm{\bavg_t}^2}\\
            &\quad + \frac{5\teta^3\beta^2 L^2}{4(1-\beta)^2}\E\brk{\norm{\bar{z}_t}^2} + \frac{9\teta L^2}{n}\cR_t^x + \frac{3\teta\eta_a^2Q^2L^2}{n}\E\brk{\norm{\z_t - \nabla F(\1\bar{x}_t^{\T})}^2}\\
            &\quad + \frac{3\teta \eta_a^2Q^2L^2}{n}\cR_t^{y} + \frac{\teta^2 L\sigma^2}{2nQ} + 3\teta\eta_a^2QL^2\sigma^2,
        \end{align*}
    where we invoked the condition $\eta_a\leq 1/(15QL)$.

    Define $C_0:= 1+63c_0$. Invoking \eqref{eq:nfx2nfd} and following a derivation similar to \eqref{eq:tcL_s1}, we get
    \small
    \begin{equation}
        \label{eq:tcL_pl_s1}
        \begin{aligned}
            &\tcL_{t + 1} \leq \E\brk{f(\bar{d}_t)} - f^* +  \frac{\teta^2L\sigma^2}{2nQ} + \frac{66\teta^3L^2\sigma^2}{nQ(1-\beta)} +86C_0\teta\eta_a^2QL^2\sigma^2\\
            &\quad + \brk{\frac{5(1-\beta)}{16} + \beta + \frac{646c_0\eta_a^2Q^2L^2(1-\beta)^4}{(1-\trw)^2} + 23C_0\eta_a^2Q^2L^2(1-\beta) }\frac{4\teta^3L^2}{(1-\beta)^3}\E\brk{\norm{\bar{z}_t}^2}\\
            &\quad + \brk{\frac{9(1-\trw)}{11} + \trw + \frac{8820c_0\eta_a^2Q^2L^2(1-\beta)^2}{11(1-\trw)} + \frac{270C_0\eta_a^2Q^2L^2(1-\trw)}{11}} \frac{11\teta L^2}{n(1-\trw)}\cR_t^x\\
            &\quad + \brk{\frac{1-\trw}{7} + \frac{11\eta_s^2c_0}{21(1-\trw)} + \frac{2+\trw}{3} + \frac{30C_0\eta_a^2Q^2L^2(1-\trw)}{7}} \frac{21\teta\eta_a^2Q^2L^2}{n(1-\trw)}\cR_t^y\\
            &\quad + \brk{\frac{1-\beta}{2C_0} + \frac{28c_0(1-\beta)^3}{C_0(1-\trw)^2} + \frac{1 + \beta}{2}}\frac{6C_0\teta\eta_a^2Q^2L^2}{n(1-\beta)}\E\brk{\norm{\z_t - \nabla F(\1\bar{x}_t^{\T})}^2}\\
            &\quad - \frac{\teta}{2}\brk{1-12\eta_a^2Q^2L^2 - 60(5+483c_0)\eta_a^4Q^4L^4}\E\brk{\norm{\nabla f(\bar{d}_t)}^2}\\
            &\quad - \frac{\teta}{2}\brk{1-\teta L - \frac{6\teta^2L^2}{(1-\beta)^2} - \frac{1932c_0\teta^2\eta_a^2Q^2L^4(1-\beta)^3}{(1-\trw)^2} - 150C_0\teta^2\eta_a^2Q^2L^4}\E\brk{\norm{\bavg_t}^2}.
        \end{aligned}
    \end{equation}\normalsize
    Noting that $\eta_a\leq 1/(15\sqrt{2C_0}QL)$, $\eta_s\leq (1-\trw)/\sqrt{6c_0}$, and $\beta\geq \trw$, we have 
    \begin{align*}
        \frac{5(1-\beta)}{16} + \beta + \frac{646c_0\eta_a^2Q^2L^2(1-\beta)^4}{(1-\trw)^2} + 24C_0\eta_a^2Q^2L^2(1-\beta) &\leq 1-\frac{1-\beta}{8},\\
        \frac{9(1-\trw)}{11} + \trw + \frac{8820c_0\eta_a^2Q^2L^2(1-\beta)^2}{11(1-\trw)} + \frac{270C_0\eta_a^2Q^2L^2(1-\trw)}{11}&\leq 1- \frac{1-\trw}{11},\\
        \frac{1-\trw}{7} + \frac{11\eta_s^2c_0}{21(1-\trw)} + \frac{2+\trw}{3} + \frac{30C_0\eta_a^2Q^2L^2(1-\trw)}{7}&\leq 1-\frac{1-\trw}{21},\\
        \frac{1-\beta}{2C_0} + \frac{28c_0(1-\beta)^3}{C_0(1-\trw)^2} + \frac{1 + \beta}{2} &\leq 1 - \frac{1-\beta}{21}.
    \end{align*}
    Letting 
    \begin{align*}
        \teta\leq\min\crk{\frac{1-\trw}{7\mu}, \frac{1-\beta}{8\mu}}
    \end{align*}
    and invoking Assumption~\ref{as:PL} leads to the desired result \eqref{eq:tcL_pl}.

\bibliographystyle{siamplain}
\bibliography{references_all}

\end{document}